\documentclass[12pt,reqno]{amsart}
\usepackage{amsmath,amsthm,amsfonts,amssymb,mathrsfs}
\input xymatrix
\xyoption{all}
\date{\today}

\usepackage{color}

 \setlength{\textwidth}{17.5truecm}
 \setlength{\textheight}{25.truecm}
 \setlength{\oddsidemargin}{-20pt}
 \setlength{\evensidemargin}{-20pt}
 \setlength{\topmargin}{-40pt}



\usepackage{hyperref}

\newtheorem{theorem}{Theorem}[section]
\newtheorem{proposition}[theorem]{Proposition}
\newtheorem{corollary}[theorem]{Corollary}
\newtheorem{lemma}[theorem]{Lemma}
\theoremstyle{definition}

\newtheorem{example}[theorem]{Example}
\newtheorem{remark}[theorem]{Remark}

\newtheorem{definition}[theorem]{Definition}

\begin{document}

\title[On pseudocompact topological Brandt $\lambda^0$-extensions of semitopological monoids]{On pseudocompact topological Brandt $\lambda^0$-extensions of semitopological monoids}

\author[O. Gutik]{Oleg~Gutik}
\address{Department of Mechanics and Mathematics, National University of Lviv, Universytetska 1, Lviv, 79000, Ukraine}
\email{o\_\,gutik@franko.lviv.ua, ovgutik@yahoo.com}

\author[K. Pavlyk]{Kateryna~Pavlyk}
\address{Institute of Mathematics, University of Tartu, J. Liivi 2, 50409, Tartu, Estonia}
\email{kateryna.pavlyk@ut.ee}

\keywords{Semitopological semigroup, Stone-\v{C}ech compactification, Bohr compactification, pseudocompact space, countably pracompact space, countably compact space, semigroup extension, category, full functor, representative functor. }

\subjclass[2010]{22A15, 54H15}

\begin{abstract}
In the paper we investigate topological properties of a topological Brandt
$\lambda^0$-extension $B^0_{\lambda}(S)$ of a semitopological monoid $S$ with zero. In particular we prove that for every Tychonoff pseudocompact (resp., Hausdorff countably compact, Hausdorff compact) semitopological monoid $S$ with zero there exists a unique semiregular pseudocompact (resp., Hausdorff countably compact, Hausdorff compact) extension $B^0_{\lambda}(S)$ of $S$ and establish theirs Stone-\v{C}ech and Bohr compactifications. We also describe a category whose objects are ingredients in the constructions of pseudocompact (resp., countably compact, sequentially compact, compact) topological Brandt $\lambda^0$-extensions of pseudocompact (resp., countably compact, sequentially compact, compact) semitopological monoids with zeros.

\end{abstract}

\maketitle


\section{Introduction, preliminaries and definitions}

In this article we shall follow the terminology of \cite{BucurDeleanu1968, CliffordPreston1961-1967, Engelking1989, Howie1995, Petrich1984, Ruppert1984}. All topological spaces are assumed to be Hausdorff. By $\omega$ we shall denote the first infinite ordinal and by $|A|$ the cardinality of the set $A$. All cardinals we shall identify with their corresponding initial ordinals. If $Y$ is a subspace of a topological space $X$ and $A\subseteq Y$, then by $\operatorname{cl}_Y(A)$ and $\operatorname{int}_Y(A)$ we shall denote the topological closure and the interior of the subset $A$ in $Y$, respectively.

A \emph{semigroup} is a non-empty set with a binary associative
operation. A semigroup $S$ is called \emph{inverse} if for any $x\in
S$ there exists a unique $y\in S$ such that $x\cdot y\cdot x=x$ and
$y\cdot x\cdot y=y$. Such an element $y$ in $S$ is called the
\emph{inverse} of $x$ and denoted by $x^{-1}$. The map defined on an
inverse semigroup $S$ which maps every element $x$ of $S$ to its
inverse $x^{-1}$ is called the \emph{inversion}.

For a semigroup $S$ by $E(S)$ we denote the subset of all idempotents of $S$.

Let $S$ be a semigroup with zero $0_S$ and $\lambda$ be any cardinal $\geqslant 1$. We define the semigroup operation on the set $B_{\lambda}(S)=(\lambda\times S\times
\lambda)\cup\{ 0\}$ as follows:
\begin{equation*}
 (\alpha,a,\beta)\cdot(\gamma, b, \delta)=
  \begin{cases}
    (\alpha, ab, \delta), & \text{ if~} \beta=\gamma; \\
    0,                    & \text{ if~} \beta\ne \gamma,
  \end{cases}
\end{equation*}
and $(\alpha, a, \beta)\cdot 0=0\cdot(\alpha, a, \beta)=0\cdot
0=0,$ for all $\alpha, \beta, \gamma, \delta\in\lambda$ and
$a,b\in S$. If $S=S^1$ then the semigroup $B_\lambda(S)$ is called
the {\it Brandt $\lambda$-extension of the semigroup}
$S$~\cite{Gutik1999, GutikPavlyk2001}. It is clear that ${\mathcal
J}=\{ 0\}\cup\{(\alpha, 0_S, \beta)\mid 0_S \hbox{~is the zero
of~}S\}$ is an ideal of $B_\lambda(S)$. We put
$B^0_\lambda(S)=B_\lambda(S)/{\mathcal J}$ and the semigroup
$B^0_\lambda(S)$ is called the {\it Brandt $\lambda^0$-extension
of the semigroup $S$ with zero}~\cite{GutikPavlyk2006}.

Next, if $A\subseteq S$ then we shall denote
$A_{\alpha\beta}=\{(\alpha, s,\beta)\mid s\in A \}$ if $A$ does
not contain zero, $A_{\alpha,\beta}=\{(\alpha, s, \beta)\mid s\in
A\setminus\{ 0_S\} \}\cup \{ 0\}$ and
$A_{\alpha,\beta}^*=A_{\alpha,\beta}\setminus\{0\}$ if $0_S\in A$,
for $\alpha, \beta\in{\lambda}$. If $\mathcal{I}$ is a trivial
semigroup (i.e., $\mathcal{I}$ contains only one element), then we
denote the semigroup $\mathcal{I}$ with the adjoined zero by
${\mathcal{I}}^0$. Obviously, for any $\lambda\geqslant 2$, the
Brandt $\lambda^0$-extension of the semigroup ${\mathcal{I}}^0$ is
isomorphic to the semigroup of $\lambda\times\lambda$-matrix units
and any Brandt $\lambda^0$-extension of a semigroup with zero
which also has a non-zero idempotent contains the semigroup of
$\lambda\times\lambda$-matrix units.

We shall denote the semigroup of $\lambda\times\lambda$-matrix
units by $B_\lambda$ and the subsemigroup of
$\lambda\times\lambda$-matrix units of the Brandt
$\lambda^0$-extension of a monoid $S$ with zero by
$B^0_\lambda(1)$. We always consider the Brandt
$\lambda^0$-extension only of a monoid with zero. Obviously, for
any monoid $S$ with zero we have $B^0_1(S)=S$. Note that every
Brandt $\lambda$-extension of a group $G$ is isomorphic to the
Brandt $\lambda^0$-extension of the group $G^0$ with adjoined
zero. The Brandt $\lambda^0$-extension of the group with adjoined
zero is called a \emph{Brandt
semigroup}~\cite{CliffordPreston1961-1967, Petrich1984}. A
semigroup $S$ is a Brandt semigroup if and only if $S$ is a
completely $0$-simple inverse semigroup~\cite{Clifford1942,
Munn1957} (cf.  also \cite[Theorem~II.3.5]{Petrich1984}). We also
observe that the trivial semigroup $\mathcal{I}$ is isomorphic to the Brandt $\lambda^0$-extension of $\mathcal{I}$ for
every cardinal $\lambda\geqslant 1$. We shall say that the Brandt
$\lambda^0$-extension $B_\lambda^0(S)$ of a semigroup $S$ is
\emph{finite} if the cardinal $\lambda$ is finite.

For a topological space $X$, a family $\{A_s\mid
s\in\mathscr{S}\}$ of subsets of $X$ is called \emph{locally
finite} if for every point $x\in X$ there exists an open
neighbourhood $U$ of $x$ in $X$ such that the set
$\{s\in\mathscr{S}\mid U\cap A_s\neq\varnothing\}$ is finite. A
set $A$ of a topological space $X$ is called \emph{regular open}
if $A=\operatorname{int}_X(\operatorname{cl}_X(A))$.

We recall that a topological space $X$ is said to be
\begin{itemize}
  \item \emph{semiregular} if $X$ has a base with regular open subsets;
  \item \emph{compact} if each open cover of $X$ has a finite subcover;
  \item \emph{sequentially compact} if each sequence $\{x_i\}_{i\in\omega}$ of $X$ has a convergent subsequence in $X$;
  \item \emph{countably compact} if each open countable cover of $X$ has a finite subcover;
  \item \emph{countably compact at a subset} $A\subseteq X$ if every infinite subset $B\subseteq A$  has  an  accumulation  point $x$ in $X$;
  \item \emph{countably pracompact} if there exists a dense subset $A$ in $X$  such that $X$ is countably compact at $A$;
  \item \emph{pseudocompact} if each locally finite open cover of $X$ is finite.
\end{itemize}
According to Theorem~3.10.22 of \cite{Engelking1989}, a Tychonoff topological space $X$ is pseudocompact if and only if each continuous real-valued function on $X$ is bounded. Also, a Hausdorff topological space $X$ is pseudocompact if and only if every locally finite family of non-empty open subsets of $X$ is finite. Every compact space and every sequentially compact space are countably compact, every countably compact space is countably pracompact, and every countably pracompact space is pseudocompact (see \cite{Arkhangelskii1992} and \cite{Engelking1989}).

We recall that the Stone-\v{C}ech compactification of a Tychonoff space $X$ is a
compact Hausdorff space $\beta X$ containing $X$ as a dense subspace so that each continuous map $f\colon X\rightarrow Y$ to a compact Hausdorff space $Y$ extends to a continuous
map $\overline{f}\colon \beta X\rightarrow Y$.

A {\it semitopological} (resp., \emph{topological}) {\it semigroup} is a topological space together with a separately (resp., jointly) continuous semigroup operation.

\begin{definition}[\cite{GutikPavlyk2006}]\label{definition-1.1}
Let $\mathscr{S}$ be some class of semitopological semigroups with zero. Let $\lambda$ be any cardinal $\geqslant 1$, and $(S,\tau)\in\mathscr{S}$. Let $\tau_{B}$ be a topology on
$B^0_{\lambda}(S)$ such that:
\begin{itemize}
  \item[a)] $\left(B^0_{\lambda}(S), \tau_{B}\right)\in\mathscr{S}$; \;   and
  \item[b)] $\tau_{B}|_{S_{\alpha,\alpha}}=\tau$ for some $\alpha\in\lambda$.
\end{itemize}
Then $\left(B^0_{\lambda}(S), \tau_{B}\right)$ is called the \emph{topological Brandt $\lambda^0$-extension of $(S, \tau)$ in} $\mathscr{S}$.
\end{definition}

The notion of the topological Brandt $\lambda^0$-extension of topological semigroup was introduced in the paper \cite{GutikPavlyk2006} on purpose of constructing an example of an absolutely $H$-closed semigroup $S$ with an absolutely $H$-closed ideal $I$ such that $S/I$ is not a topological semigroup. In \cite{GutikRepovs2010} Gutik and Repov\v{s} described compact topological Brandt $\lambda^0$-extensions in the class of topological semigroups and countably compact topological Brandt $\lambda^0$-extensions in the class of topological inverse semigroups, and correspondent categories. Also, in \cite{GutikPavlykReiter2009} countably compact topological Brandt $\lambda^0$-extensions in the class of topological semigroups were described. This paper is motivated by the result obtained in \cite{GutikPavlyk2005} which states that every Hausdorff pseudocompact topology $\tau$ on the infinite semigroup of matrix units $B_\lambda$ such that $(B_\lambda,\tau)$ is a semitopological semigroup, is compact.

In this paper we investigate topological properties of a topological Brandt
$\lambda^0$-extension $B^0_{\lambda}(S)$ of a semitopological monoid $S$ with zero. In particular we prove that for every Tychonoff pseudocompact (resp., Hausdorff countably compact, Hausdorff compact) semitopological monoid $S$ with zero there exists a unique semiregular pseudocompact (resp., Hausdorff countably compact, Hausdorff compact) extension $B^0_{\lambda}(S)$ of $S$ and establish their Stone-\v{C}ech and Bohr compactifications. We also describe a category whose objects are ingredients in the constructions of pseudocompact (resp., countably compact, sequentially compact, compact) topological Brandt $\lambda^0$-extensions of pseudocompact (resp., countably compact, sequentially compact, compact) semitopological monoids with zeros.

\section{Pseudocompact Brandt $\lambda^0$-extensions of semitopological monoids}

The following three lemmas describe the general structure of topological Brandt $\lambda^0$-extensions of semitopological semigroups in the class of semitopological semigroups.

\begin{lemma}\label{lemma-2.1}
Let $\lambda$ be any cardinal $\geqslant 1$ and $\tau^*_{B}$ be a topology on the Brandt $\lambda^0$-extension $B^0_{\lambda}(S)$ of monoid $S$ with zero such that $\left(B^0_{\lambda}(S),\tau^*_{B}\right)$ is a semitopological semigroup. Then $S_{\alpha,\beta}$ and $S_{\gamma,\delta}$ with the induced topology from $\left(B^0_{\lambda}(S),\tau^*_{B}\right)$ are homeomorphic subspaces, and, moreover, $S_{\alpha,\alpha}$ and $S_{\beta,\beta}$ are topologically isomorphic semitopological monoids for all $\alpha,\beta,\gamma,\delta\in\lambda$.
\end{lemma}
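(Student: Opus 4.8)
The plan is to realize all the required maps as restrictions of left and right translations by the matrix units $(\gamma,1_S,\alpha)$ of $B^0_\lambda(1)$, which are available precisely because $S$ is a monoid, and which are continuous since $\left(B^0_{\lambda}(S),\tau^*_{B}\right)$ is only assumed to be \emph{semi}topological. Recall that in a semitopological semigroup every left translation $x\mapsto c\cdot x$ and every right translation $x\mapsto x\cdot c$ is continuous, so any composition of such translations, restricted to any subspace, is continuous onto its image with the induced topology.

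First I would prove the homeomorphism statement. Fix $\alpha,\beta,\gamma,\delta\in\lambda$ and put $l=(\gamma,1_S,\alpha)$ and $r=(\beta,1_S,\delta)$. Define $\varphi\colon B^0_\lambda(S)\to B^0_\lambda(S)$ by $\varphi(x)=l\cdot x\cdot r$. A direct computation with the multiplication of $B^0_\lambda(S)$ gives $\varphi(\alpha,s,\beta)=(\gamma,s,\delta)$ for every $s\in S\setminus\{0_S\}$ and $\varphi(0)=0$, so $\varphi$ maps $S_{\alpha,\beta}$ bijectively onto $S_{\gamma,\delta}$. Since $\varphi$ is a composition of one left and one right translation, its restriction $\varphi|_{S_{\alpha,\beta}}\colon S_{\alpha,\beta}\to S_{\gamma,\delta}$ is continuous. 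The map $\psi(x)=(\alpha,1_S,\gamma)\cdot x\cdot(\delta,1_S,\beta)$ is continuous for the same reason, and the same computation shows $\psi|_{S_{\gamma,\delta}}$ is a two-sided inverse of $\varphi|_{S_{\alpha,\beta}}$. Hence $\varphi|_{S_{\alpha,\beta}}$ is a homeomorphism of $S_{\alpha,\beta}$ onto $S_{\gamma,\delta}$.

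For the second assertion, I first observe that $S_{\alpha,\alpha}$ is a subsemigroup of $\left(B^0_{\lambda}(S),\tau^*_{B}\right)$ with identity $(\alpha,1_S,\alpha)$, and that $s\mapsto(\alpha,s,\alpha)$ (with $0_S\mapsto 0$) is a semigroup isomorphism of $S$ onto $S_{\alpha,\alpha}$; thus $S_{\alpha,\alpha}$ is a semitopological monoid. To compare $S_{\alpha,\alpha}$ and $S_{\beta,\beta}$ I take the conjugation $\chi(x)=(\beta,1_S,\alpha)\cdot x\cdot(\alpha,1_S,\beta)$, which by the computation above restricts to a homeomorphism $S_{\alpha,\alpha}\to S_{\beta,\beta}$ sending $(\alpha,s,\alpha)$ to $(\beta,s,\beta)$ and fixing $0$. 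It remains to check that $\chi$ is a homomorphism: from $(\alpha,s,\alpha)\cdot(\alpha,t,\alpha)=(\alpha,st,\alpha)$ and $(\beta,s,\beta)\cdot(\beta,t,\beta)=(\beta,st,\beta)$ one reads off $\chi(x\cdot y)=\chi(x)\cdot\chi(y)$ (the cases involving $0$ being trivial), and $\chi$ sends the identity $(\alpha,1_S,\alpha)$ to the identity $(\beta,1_S,\beta)$. Therefore $\chi$ is a topological isomorphism of semitopological monoids.

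The only real point that must not be overlooked is the role of the monoid hypothesis: without a two-sided identity $1_S$ the matrix units $(\gamma,1_S,\alpha)$ need not exist, and there would be no canonical translations realizing these maps. The remaining work — evaluating the products, verifying bijectivity, and checking that $\psi$ and $\varphi$ are mutually inverse — is routine and uses only separate continuity together with careful bookkeeping of the zero element $0$, which every map above fixes.
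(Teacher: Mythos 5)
Your proposal is correct and follows essentially the same route as the paper's own proof: both realize the maps as restrictions of the two-sided translations $x\mapsto(\gamma,1_S,\alpha)\cdot x\cdot(\beta,1_S,\delta)$, use separate continuity to get continuity of these translations, verify that the two conjugations are mutually inverse on the pieces $S_{\alpha,\beta}$ and $S_{\gamma,\delta}$, and then check that the diagonal conjugation $S_{\alpha,\alpha}\to S_{\beta,\beta}$ is additionally a semigroup isomorphism. Your write-up is, if anything, slightly more explicit than the paper's in computing images and tracking the zero element.
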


\begin{proof}
Since $\left(B^0_{\lambda}(S), \tau^*_{B}\right)$ is a semitopological semigroup we conclude  that the maps $\varphi^{(\gamma,\alpha)}_{(\beta,\delta)}\colon B^0_{\lambda}(S)\rightarrow B^0_{\lambda}(S)$ and $\varphi^{(\alpha,\gamma)}_{(\delta,\beta)}\colon B^0_{\lambda}(S)\rightarrow B^0_{\lambda}(S)$ defined by the formulae $\varphi^{(\gamma,\alpha)}_{(\beta,\delta)}(x)=(\gamma,1_S,\alpha)\cdot x\cdot(\beta,1_S,\delta)$ and $\varphi^{(\alpha,\gamma)}_{(\delta,\beta)}(x)=(\alpha,1_S,\gamma)\cdot x\cdot(\delta,1_S,\beta)$, respectively, are continuous maps and, moreover, the restrictions $\left(\varphi^{(\alpha,\gamma)}_{(\delta,\beta)}\circ \varphi^{(\gamma,\alpha)}_{(\beta,\delta)}\right)\Big|_{S_{\alpha,\beta}}\colon S_{\alpha,\beta} \rightarrow S_{\alpha,\beta}$ and $\left(\varphi^{(\gamma,\alpha)}_{(\beta,\delta)}\circ \varphi^{(\alpha,\gamma)}_{(\delta,\beta)} \right)\Big|_{S_{\gamma,\delta}}\colon S_{\gamma,\delta} \rightarrow S_{\gamma,\delta}$ are identity maps for all $\alpha,\beta,\gamma,\delta\in\lambda$. Hence we have that the restrictions $\varphi^{(\gamma,\alpha)}_{(\beta,\delta)}\Big|_{S_{\alpha,\beta}}\colon S_{\alpha,\beta}\rightarrow S_{\gamma,\delta}$ and $\varphi^{(\alpha,\gamma)}_{(\delta,\beta)}\Big|_{S_{\gamma,\delta}}\colon S_{\gamma,\delta}\rightarrow S_{\alpha,\beta}$ are homeomorphisms. Now, simple verifications imply that the map $\varphi^{(\beta,\alpha)}_{(\alpha,\beta)}\Big|_{S_{\alpha,\alpha}}\colon S_{\alpha,\alpha}\rightarrow S_{\beta,\beta}$ is an isomorphism of subsemigroups $S_{\alpha,\alpha}$ and $S_{\beta,\beta}$ of $B^0_{\lambda}(S)$. This completes the proof of the lemma.
\end{proof}

\begin{lemma}\label{lemma-2.2}
Let $\lambda$ be any cardinal $\geqslant 1$ and $\tau^*_{B}$ be a topology on the Brandt $\lambda^0$-extension $B^0_{\lambda}(S)$ of monoid $S$ with zero such that $\left(B^0_{\lambda}(S),\tau^*_{B}\right)$ is a Hausdorff semitopological semigroup. Then for arbitrary $(\alpha,s,\beta)\in B^0_{\lambda}(S)\setminus\{0\}$ there exists an open neighbourhood $U_{(\alpha,s,\beta)}$ of the point $(\alpha,s,\beta)$ in $\left(B^0_{\lambda}(S),\tau^*_{B}\right)$ such that $U_{(\alpha,s,\beta)}\subseteq S_{\alpha,\beta}^*$, and hence $S_{\alpha,\beta}^*$ is an open subset and $S_{\alpha,\beta}$ is a closed subset of $\left(B^0_{\lambda}(S),\tau^*_{B}\right)$, for all $\alpha,\beta\in\lambda$.
\end{lemma}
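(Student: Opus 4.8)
The plan is to exploit separate continuity to manufacture a single continuous self-map of $B^0_{\lambda}(S)$ that fixes $S^*_{\alpha,\beta}$ pointwise and collapses everything else onto $0$, and then to peel off a $0$-free neighbourhood of the given point using the Hausdorff hypothesis. First I would fix a non-zero point $(\alpha,s,\beta)$ and introduce the two diagonal matrix-unit idempotents $(\alpha,1_S,\alpha)$ and $(\beta,1_S,\beta)$; these are genuine non-zero elements, since the very existence of the non-zero $(\alpha,s,\beta)$ forces $s\neq 0_S$ and hence $1_S\neq 0_S$. I then consider the map $h\colon B^0_{\lambda}(S)\to B^0_{\lambda}(S)$ defined by $h(x)=(\alpha,1_S,\alpha)\cdot x\cdot(\beta,1_S,\beta)$. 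Because $\left(B^0_{\lambda}(S),\tau^*_{B}\right)$ is a semitopological semigroup, the one-sided translations $x\mapsto(\alpha,1_S,\alpha)\cdot x$ and $x\mapsto x\cdot(\beta,1_S,\beta)$ are continuous, so their composite $h$ is continuous as well.

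Next I would read off the action of $h$ from the multiplication table of $B^0_{\lambda}(S)$. A routine check gives $h\bigl((\gamma,t,\delta)\bigr)=(\alpha,t,\beta)$ when $\gamma=\alpha$ and $\delta=\beta$, and $h\bigl((\gamma,t,\delta)\bigr)=0$ in all remaining cases, while $h(0)=0$. Consequently $h$ restricts to the identity on $S^*_{\alpha,\beta}$ and maps every point of $B^0_{\lambda}(S)\setminus S^*_{\alpha,\beta}$ to $0$; equivalently, $h(x)\neq 0$ holds if and only if $x\in S^*_{\alpha,\beta}$. Thus $h$ is a continuous retraction of $B^0_{\lambda}(S)$ onto $S_{\alpha,\beta}=S^*_{\alpha,\beta}\cup\{0\}$.

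Now I would invoke separation. Since $(\alpha,s,\beta)\neq 0$ and the space is Hausdorff, there is an open set $V$ with $(\alpha,s,\beta)\in V$ and $0\notin V$. Put $U_{(\alpha,s,\beta)}=h^{-1}(V)$: this is open by continuity of $h$, and it contains $(\alpha,s,\beta)$ because $h\bigl((\alpha,s,\beta)\bigr)=(\alpha,s,\beta)\in V$. For any $x\in U_{(\alpha,s,\beta)}$ we have $h(x)\in V$, so $h(x)\neq 0$, whence $x\in S^*_{\alpha,\beta}$; therefore $U_{(\alpha,s,\beta)}\subseteq S^*_{\alpha,\beta}$, which is the required neighbourhood. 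Since every point of $S^*_{\alpha,\beta}$ is interior, $S^*_{\alpha,\beta}$ is open, and the same argument applies to every pair of indices. Finally, using the disjoint decomposition $B^0_{\lambda}(S)=\{0\}\cup\bigcup_{\gamma,\delta}S^*_{\gamma,\delta}$, the complement of $S_{\alpha,\beta}=S^*_{\alpha,\beta}\cup\{0\}$ equals $\bigcup_{(\gamma,\delta)\neq(\alpha,\beta)}S^*_{\gamma,\delta}$, a union of open sets, so $S_{\alpha,\beta}$ is closed.

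The only genuine content here is the observation that two-sided translation by the diagonal matrix units acts as a retraction onto $S^*_{\alpha,\beta}\cup\{0\}$; once this is in place the topological conclusions are immediate, and the Hausdorff hypothesis is used only in its weak $T_1$ form, namely to secure a $0$-free open $V$ around the point. I expect the main (and still minor) obstacle to be carrying out the multiplication-table computation of $h$ cleanly, so that the equivalence $h(x)\neq 0\iff x\in S^*_{\alpha,\beta}$ is made fully transparent.
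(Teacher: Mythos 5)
Your proof is correct and takes essentially the same route as the paper: the paper's argument uses exactly the same two-sided translation map $\varphi^{(\alpha,\alpha)}_{(\beta,\beta)}(x)=(\alpha,1_S,\alpha)\cdot x\cdot(\beta,1_S,\beta)$, a $0$-free open neighbourhood of $(\alpha,s,\beta)$ supplied by Hausdorffness, and continuity of this map to pull back to a neighbourhood contained in $S^*_{\alpha,\beta}$. The closedness of $S_{\alpha,\beta}$ is likewise obtained in both cases by writing its complement as the union of the open sets $S^*_{\gamma,\delta}$, $(\gamma,\delta)\neq(\alpha,\beta)$.
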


\begin{proof}
Let $V_{(\alpha,s,\beta)}$ be an open neighbourhood of the point $(\alpha,s,\beta)$ in $\left(B^0_{\lambda}(S),\tau^*_{B}\right)$ such that $0\notin V_{(\alpha,s,\beta)}$. Then the continuity of the map $\varphi^{(\alpha,\alpha)}_{(\beta,\beta)}\colon B^0_{\lambda}(S)\rightarrow B^0_{\lambda}(S)$ which is defined by the formula $\varphi^{(\alpha,\alpha)}_{(\beta,\beta)}(x)=(\alpha,1_S,\alpha)\cdot x\cdot(\beta,1_S,\beta)$ implies that there exists an open neighbourhood $U_{(\alpha,s,\beta)}$ of the point $(\alpha,s,\beta)$ in $\left(B^0_{\lambda}(S),\tau^*_{B}\right)$ such that $U_{(\alpha,s,\beta)}\subseteq S_{\alpha,\beta}^*$. Hence $S_{\alpha,\beta}^*$ is open in $\left(B^0_{\lambda}(S),\tau^*_{B}\right)$ and since $S_{\alpha,\beta}=B^0_{\lambda}(S)\setminus \bigcup_{(\gamma,\delta)\in(\lambda\times\lambda)\setminus(\alpha,\beta)} S_{\gamma,\delta}^*$ we have that $S_{\alpha,\beta}$ is a closed subset of the topological space $\left(B^0_{\lambda}(S),\tau^*_{B}\right)$.
\end{proof}

\begin{proposition}\label{proposition-2.3}
Let $S$ be a semitopological monoid with zero, $\lambda\geqslant
1$ be any cardinal, and $B_\lambda^0(S)$ be a topological Brandt
$\lambda^0$-extension of $S$ in the class of semitopological
semigroup. Then every non-trivial continuous homomorphic image of
$B_\lambda^0(S)$ in a semitopological semigroup $W$ is a
topological Brandt $\lambda^0$-extension of some semitopological
monoid with zero. Moreover, if $T$ is the continuous image of
$B_\lambda^0(S)$ under a homomorphism $h$, then $T$ is
topologically isomorphic to a topological Brandt
$\lambda^0$-extension of the continuous homomorphic image of the
semitopological monoid $S_{\alpha,\alpha}$ under the homomorphism
$h$ for any $\alpha\in \lambda$. Also, the subspaces
$h(A_{\alpha\beta})$ and $h(A_{\gamma\delta})$ of $W$ are
homeomorphic for all $\alpha,\beta,\gamma,\delta\in\lambda$, and
every nonempty $A\subseteq S$.

\end{proposition}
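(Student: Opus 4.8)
The plan is to transport the Brandt structure through $h$, the essential point being that non-triviality prevents any collapse of the index set $\lambda$. Write $0_T=h(0)$ for the zero of $T$ and $f_{\alpha\beta}=h\left((\alpha,1_S,\beta)\right)$ for $\alpha,\beta\in\lambda$. Since $h$ is a homomorphism and the elements $(\alpha,1_S,\beta)$ multiply like matrix units, so do the $f_{\alpha\beta}$: one has $f_{\alpha\beta}f_{\gamma\delta}=f_{\alpha\delta}$ when $\beta=\gamma$, and $f_{\alpha\beta}f_{\gamma\delta}=0_T$ otherwise.

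First I would prove that $f_{\alpha\alpha}\neq 0_T$ for every $\alpha\in\lambda$. From $(\alpha,1_S,\beta)=(\alpha,1_S,\alpha)\cdot(\alpha,1_S,\beta)\cdot(\beta,1_S,\beta)$ one gets $f_{\alpha\beta}=f_{\alpha\alpha}f_{\alpha\beta}f_{\beta\beta}$; hence if $f_{\beta\beta}=0_T$ for a single $\beta$, then $f_{\alpha\beta}=0_T$ for all $\alpha$, and so $f_{\alpha\alpha}=f_{\alpha\beta}f_{\beta\alpha}=0_T$ for all $\alpha$. Since every element satisfies $(\gamma,s,\delta)=(\gamma,1_S,\gamma)\cdot(\gamma,s,\delta)$, all diagonal idempotents being $0_T$ would force $h\left(B^0_\lambda(S)\right)=\{0_T\}$, contradicting non-triviality. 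Thus $f_{\alpha\alpha}\neq 0_T$ for all $\alpha$, whence each $f_{\alpha\beta}\neq 0_T$ (as $f_{\alpha\beta}f_{\beta\alpha}=f_{\alpha\alpha}$), and the $f_{\alpha\beta}$ are pairwise distinct: from $f_{\alpha\beta}=f_{\gamma\delta}$, left multiplication by $f_{\gamma\gamma}$ and right multiplication by $f_{\delta\delta}$, compared on both sides, force $\alpha=\gamma$ and $\beta=\delta$. This is the step I expect to be the main obstacle, since it is exactly what guarantees that the index cardinal $\lambda$ is preserved.

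Next, fix $\alpha\in\lambda$ and set $S'=h\left(S_{\alpha,\alpha}\right)$ with the subspace topology from $W$; it is the continuous homomorphic image of the semitopological monoid $S_{\alpha,\alpha}$ under $h$, a semitopological monoid with identity $f_{\alpha\alpha}$ and zero $0_T$. Using $(\gamma,s,\delta)=(\gamma,1_S,\alpha)\cdot(\alpha,s,\alpha)\cdot(\alpha,1_S,\delta)$, which yields $h\left((\gamma,s,\delta)\right)=f_{\gamma\alpha}\cdot h\left((\alpha,s,\alpha)\right)\cdot f_{\alpha\delta}$, I would define $\Psi\colon B^0_\lambda(S')\to T$ by $\Psi\left((\gamma,s',\delta)\right)=f_{\gamma\alpha}\cdot s'\cdot f_{\alpha\delta}$ and $\Psi(0)=0_T$. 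A short computation with the matrix-unit relations (the inner product $f_{\alpha\delta}f_{\gamma'\alpha}$ equals $f_{\alpha\alpha}$ when $\delta=\gamma'$ and $0_T$ otherwise, and $f_{\alpha\alpha}$ is the identity of $S'$) shows $\Psi$ is a homomorphism; surjectivity is immediate from the displayed decomposition, and injectivity follows because any value $\Psi\left((\gamma,s',\delta)\right)\neq 0_T$ determines its indices — left multiplication by $f_{\gamma\gamma}$ annihilates it unless its first index is $\gamma$, and symmetrically for $\delta$ — after which $s'=f_{\alpha\gamma}\cdot\Psi\left((\gamma,s',\delta)\right)\cdot f_{\delta\alpha}$ recovers $s'$.

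Finally, to obtain the topological isomorphism I would transport the subspace topology of $T\subseteq W$ to $B^0_\lambda(S')$ along $\Psi^{-1}$; as $\Psi$ is an algebraic isomorphism onto the semitopological semigroup $T$, the transported topology makes $B^0_\lambda(S')$ a semitopological semigroup for which $\Psi$ is a homeomorphism, and it remains only to verify condition~(b) of Definition~\ref{definition-1.1}. But $\Psi$ restricts to the diagonal $(S')_{\alpha,\alpha}$ as $s'\mapsto f_{\alpha\alpha}s'f_{\alpha\alpha}=s'$, i.e. as the identity onto $S'\subseteq W$, so the transported topology restricted to $(S')_{\alpha,\alpha}$ is precisely the topology of $S'$; hence $B^0_\lambda(S')$ is a topological Brandt $\lambda^0$-extension of $S'$ and $T\cong B^0_\lambda(S')$ topologically. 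For the last assertion, the maps $w\mapsto f_{\gamma\alpha}\cdot w\cdot f_{\beta\delta}$ and $w'\mapsto f_{\alpha\gamma}\cdot w'\cdot f_{\delta\beta}$ carry $h\left(A_{\alpha\beta}\right)$ and $h\left(A_{\gamma\delta}\right)$ onto each other; they are continuous because $W$ is semitopological and, by the matrix-unit relations, mutually inverse on these subspaces, exactly as in Lemma~\ref{lemma-2.1}, so $h\left(A_{\alpha\beta}\right)$ and $h\left(A_{\gamma\delta}\right)$ are homeomorphic.
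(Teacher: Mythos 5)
Your proposal is correct. It is worth comparing routes: the paper's own proof delegates the entire algebraic skeleton of the statement --- that a non-annihilating homomorphic image of $B^0_\lambda(S)$ is again a Brandt $\lambda^0$-extension of $h(S_{\alpha,\alpha})$ over the same index cardinal --- to Proposition~3.2 of \cite{GutikRepovs2010}, dispatches the annihilating case as trivial, and then proves only the final homeomorphism assertion, using exactly the translation maps $s\mapsto h((\gamma,1,\alpha))\cdot s\cdot h((\beta,1,\delta))$ and $s\mapsto h((\alpha,1,\gamma))\cdot s\cdot h((\delta,1,\beta))$ that appear in your last paragraph; so that part of your argument coincides with the paper's. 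Your first three paragraphs replace the citation with a self-contained proof: the matrix-unit calculus for the elements $f_{\alpha\beta}=h((\alpha,1_S,\beta))$, the non-degeneracy argument (non-triviality forces $f_{\alpha\alpha}\neq 0_T$, hence all $f_{\alpha\beta}$ are non-zero and pairwise distinct, so the cardinal $\lambda$ survives), the explicit isomorphism $\Psi$, and the transport of the subspace topology of $T$ with verification of both conditions of Definition~\ref{definition-1.1}. What your version buys is self-containedness and an explicit check that the image is a \emph{topological}, not merely algebraic, Brandt $\lambda^0$-extension --- a point the paper leaves implicit behind the citation; what the paper's version buys is brevity. One micro-point you should state rather than leave implicit: injectivity of $\Psi$ also requires $\Psi^{-1}(0_T)=\{0\}$, i.e.\ that $f_{\gamma\alpha}\cdot s'\cdot f_{\alpha\delta}\neq 0_T$ whenever $s'\neq 0_T$; this follows in one line from your own recovery formula $s'=f_{\alpha\gamma}\cdot\Psi((\gamma,s',\delta))\cdot f_{\delta\alpha}$, since a zero value would force $s'=0_T$, but it is a separate assertion from the index-recovery argument you give.
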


\begin{proof}
Let $h\colon B_\lambda^0(S)\rightarrow W$ be a continuous homomorphism of a topological Brandt
$\lambda^0$-extension of a semitopological monoid $S$  into a semitopological semigroup $W$. The algebraic part of the proof follows from Proposition~3.2 of \cite{GutikRepovs2010}.
If $h$ is an annihilating homomorphism, then the statement of the
lemma is trivial.

We fix arbitrary $\alpha, \beta, \gamma, \delta\in{\lambda}$ and define the maps
$\varphi^{(\gamma,\alpha)}_{(\beta,\delta)}\colon W\rightarrow W$ and $\varphi^{(\alpha,\gamma)}_{(\delta,\beta)}\colon W\rightarrow W$ by the formulae
\begin{equation*}
\varphi^{(\gamma,\alpha)}_{(\beta,\delta)}(s)=h((\gamma, 1,
\alpha))\cdot s\cdot h((\beta, 1, \delta))
 \quad \text{~and~} \quad
\varphi^{(\alpha,\gamma)}_{(\delta,\beta)}(s)=h((\alpha, 1,
\gamma))\cdot s\cdot h((\delta, 1, \beta)),
\end{equation*}
for $s\in W$. Obviously we have that
\begin{equation*}
\varphi^{(\alpha,\gamma)}_{(\delta,\beta)}\left(
\varphi^{(\gamma,\alpha)}_{(\beta,\delta)}\left(h\left((\alpha, x,
\beta)\right) \right) \right)=h\left((\alpha, x, \beta)\right)
 \quad \text{~and~} \quad
\varphi^{(\gamma,\alpha)}_{(\beta,\delta)}\left(
\varphi^{(\alpha,\gamma)}_{(\delta,\beta)}\left(h\left((\gamma, x,
\delta)\right) \right) \right)=h\left((\gamma, x, \delta)\right),
\end{equation*}
for all $\alpha,\beta,\gamma,\delta\in\lambda$, $x\in S$, and hence the map $\varphi^{(\gamma,\alpha)}_{(\beta,\delta)}\Big|_{h(S_{\alpha,\beta})}\colon h(S_{\alpha,\beta})\rightarrow h(S_{\gamma,\delta})$ is invertible to the map
$\varphi^{(\alpha,\gamma)}_{(\delta,\beta)}\Big|_{h(S_{\gamma,\delta})}\colon h(S_{\gamma,\delta})\rightarrow h(S_{\alpha,\beta})$. This implies that the map $\varphi^{(\gamma,\alpha)}_{(\beta,\delta)}\Big|_{h(S_{\alpha,\beta})}\colon h(S_{\alpha,\beta})\rightarrow h(S_{\gamma,\delta})$ is a homeomorphism and hence the last statement of the proposition holds.
\end{proof}

Results of Section~2 of \cite{GutikPavlyk2006} imply that for any infinite cardinal $\lambda$ and every non-trivial topological semigroup (and hence for a semitopological semigroup) $S$, there are many topological Brandt $\lambda^0$-extensions of $S$ in the class of semitopological semigroups. Moreover, for any infinite cardinal $\lambda$ on the Brandt $\lambda^0$-extension of two-element monoid with zero (i.e., on the infinite semigroup of $\lambda\times\lambda$-units) there exist many topologies which turns $B_\lambda$ into a topological (and hence a semitopological) semigroup (cf.  \cite{GutikPavlyk2005}).

The following proposition describes the properties of finite topological Brandt $\lambda^0$-extensions of semitopological semigroups in the class of semitopological semigroups.

\begin{proposition}\label{proposition-2.4}
Let $\lambda$ be any finite non-zero cardinal. Let $(S,\tau)$ be a
semitopological semigroup and $\tau_{B}$ a topology on
$B^0_{\lambda}(S)$ such that $\left(B^0_{\lambda}(S),
\tau_{B}\right)$ is a semitopological semigroup and
$\tau_{B}|_{S_{\alpha,\alpha}}=\tau$ for some $\alpha\in\lambda$.
Then the following assertions hold:
\begin{itemize}
    \item[$(i)$] If a non-empty subset $A\not\ni 0_S$ of $S$ is     open in $S$, then so is $A_{\beta,\gamma}$ in $\left(B^0_{\lambda}(S),\tau_{B}\right)$ for any
        $\beta,\gamma\in\lambda$;

    \item[$(ii)$] If a non-empty subset $A\ni 0_S$ of $S$ is open in $S$, then so is $\bigcup_{\beta,\gamma\in\lambda}A_{\beta,\gamma}$ in    $\left(B^0_{\lambda}(S),\tau_{B}\right)$;

    \item[$(iii)$] If a non-empty subset $A\not\ni 0_S$ of $S$ is closed in $S$, then so is $A_{\beta,\gamma}$ in $\left(B^0_{\lambda}(S),\tau_{B}\right)$ for any    $\beta,\gamma\in\lambda$;

    \item[$(iv)$] If a non-empty subset $A\ni 0_S$ of $S$ is closed in $S$, then so is $\bigcup_{\beta,\gamma\in\lambda}A_{\beta,\gamma}$ in     $\left(B^0_{\lambda}(S),\tau_{B}\right)$;

    \item[$(v)$] If $x$ is a non-zero element of $S$ and $\mathscr{B}_x$ is a base of the topology $\tau$ at $x$, then the family $\mathscr{B}_{(\beta,x,\gamma)}=    \{U_{\beta,\gamma}\mid U\in\mathscr{B}_x\}$ is a base of the topology $\tau_B$ at the point $(\beta,x,\gamma)\in B^0_{\lambda}(S)$ for any $\beta,\gamma\in\lambda$;

    \item[$(vi)$] If $\mathscr{B}_{0_S}$ is a base of the topology $\tau$ at zero $0_S$ of $S$, then the family $\mathscr{B}_0=\{\bigcup_{\beta,\gamma\in\lambda} U_{\beta,\gamma}\mid U\in\mathscr{B}_{0_S}\}$ is a base of the topology $\tau_B$ at zero $0$ of the semigroup $B^0_{\lambda}(S)$.
\end{itemize}
\end{proposition}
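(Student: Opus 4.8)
The plan is to prove the six assertions by exploiting the finiteness of $\lambda$ together with the separate continuity of the semigroup operation, using the shift maps $\varphi^{(\gamma,\alpha)}_{(\beta,\delta)}$ introduced in Lemma~\ref{lemma-2.1}. The key structural facts I will lean on are: (1) each such map is continuous, and (2) by Lemma~\ref{lemma-2.1} the restriction $\varphi^{(\gamma,\alpha)}_{(\beta,\delta)}\big|_{S_{\alpha,\beta}}\colon S_{\alpha,\beta}\to S_{\gamma,\delta}$ is a homeomorphism. The finiteness of $\lambda$ is what lets me pass between "open/closed in the slice $S_{\alpha,\alpha}$" and "open/closed in all of $B^0_\lambda(S)$", since finite unions and finite intersections preserve both properties.

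**First I would establish $(i)$.** Fix an open $A\not\ni 0_S$ in $S=S_{\alpha,\alpha}$. Since $A$ omits $0_S$, the set $A_{\alpha,\alpha}$ is an open subset of $S_{\alpha,\alpha}$ contained in $S^*_{\alpha,\alpha}$, which by Lemma~\ref{lemma-2.2} is open in $B^0_\lambda(S)$; hence $A_{\alpha,\alpha}$ is open in $B^0_\lambda(S)$. To move to an arbitrary slice $A_{\beta,\gamma}$, I apply the homeomorphism of Lemma~\ref{lemma-2.1}: the map carrying $S_{\alpha,\alpha}$ onto $S_{\beta,\gamma}$ sends $A_{\alpha,\alpha}$ to $A_{\beta,\gamma}$, so openness transfers. (One must check the homeomorphism indeed acts as $(\alpha,s,\alpha)\mapsto(\beta,s,\gamma)$ on the underlying $S$-coordinate; this is the "simple verification" indicated in Lemma~\ref{lemma-2.1}.) Assertion $(iii)$ follows by the identical argument with "open" replaced by "closed", since homeomorphisms preserve closedness and, by Lemma~\ref{lemma-2.2}, $S_{\beta,\gamma}$ is closed in $B^0_\lambda(S)$.

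**For $(ii)$ and $(iv)$**, which concern subsets $A\ni 0_S$, the strategy is to write $\bigcup_{\beta,\gamma}A_{\beta,\gamma}$ in terms of the preimages of $A_{\alpha,\alpha}$ under the shift maps. Concretely, I expect to show $\bigcup_{\beta,\gamma\in\lambda}A_{\beta,\gamma}=\bigcup_{\beta,\gamma\in\lambda}\bigl(\varphi^{(\alpha,\beta)}_{(\gamma,\alpha)}\bigr)^{-1}(A_{\alpha,\alpha})$, using that such preimages of the open (respectively closed) set $A_{\alpha,\alpha}$ in $S_{\alpha,\alpha}$ are open (respectively closed) in $B^0_\lambda(S)$ by continuity of the maps; the point $0$ is swept in because $A$ contains $0_S$, so the preimages correctly absorb the zero. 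Since $\lambda$ is finite, the union is a finite union and openness/closedness is preserved — this is exactly where finiteness of $\lambda$ is indispensable, as an infinite union of closed sets need not be closed.

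**The main obstacle**, and the step I would handle most carefully, is the base assertions $(v)$ and $(vi)$, since they require producing neighbourhood bases rather than merely verifying open/closed. For $(v)$, given a base $\mathscr{B}_x$ at a non-zero point $x\in S_{\alpha,\alpha}$, assertion $(i)$ already shows each $U_{\beta,\gamma}$ is open; I then verify it is a \emph{base} at $(\beta,x,\gamma)$ by taking an arbitrary open $W\ni(\beta,x,\gamma)$, pulling it back along the homeomorphism $S_{\beta,\gamma}\to S_{\alpha,\alpha}$ to an open neighbourhood of $x$, choosing $U\in\mathscr{B}_x$ inside that preimage, and pushing $U_{\alpha,\alpha}$ forward. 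For $(vi)$ the difficulty is sharper: a neighbourhood $W$ of $0$ in $B^0_\lambda(S)$ need \emph{not} be of slice form, so I must show its trace on the "diagonal" recovers a $\tau$-neighbourhood of $0_S$ and that the resulting $\bigcup_\beta\gamma U_{\beta,\gamma}$ sits inside $W$. I would argue that $\varphi^{(\alpha,\beta)}_{(\gamma,\alpha)}$ maps $0\mapsto 0$, so continuity gives, for each pair $(\beta,\gamma)$, a $\tau$-neighbourhood $U^{(\beta,\gamma)}$ of $0_S$ with $U^{(\beta,\gamma)}_{\beta,\gamma}\subseteq W$; intersecting the finitely many $U^{(\beta,\gamma)}$ and refining by $\mathscr{B}_{0_S}$ yields the required basic set inside $W$. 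The finiteness of $\lambda$ again guarantees this intersection stays a genuine $\tau$-neighbourhood of $0_S$.
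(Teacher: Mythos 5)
Your handling of $(i)$, $(iii)$ and $(v)$ is essentially sound (the paper reaches $(i)$ via preimages under the shift maps rather than via the slice homeomorphisms of Lemma~\ref{lemma-2.1}, but both routes work, given Lemma~\ref{lemma-2.2} to pass from relative to absolute openness/closedness). The genuine gap is in $(ii)$, and it propagates to $(iv)$ and $(vi)$. The identity you ``expect to show'' is false: the shift map $\varphi^{(\alpha,\beta)}_{(\gamma,\alpha)}$ sends the zero \emph{and every slice other than} $S_{\beta,\gamma}$ to $0$, and since $0\in A_{\alpha,\alpha}$ (because $0_S\in A$), the full preimage is not a slice-type set but
\begin{equation*}
\bigl(\varphi^{(\alpha,\beta)}_{(\gamma,\alpha)}\bigr)^{-1}(A_{\alpha,\alpha})
= A_{\beta,\gamma}\cup\bigcup\left\{S_{\iota,\kappa}\mid(\iota,\kappa)\in(\lambda\times\lambda)\setminus\{(\beta,\gamma)\}\right\}.
\end{equation*}
Consequently, for $\lambda\geqslant 2$ the \emph{union} of these preimages over all pairs $(\beta,\gamma)$ is all of $B^0_{\lambda}(S)$, not $\bigcup_{\beta,\gamma}A_{\beta,\gamma}$. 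What is true --- and what the paper's proof uses --- is that $\bigcup_{\beta,\gamma}A_{\beta,\gamma}$ equals the \emph{intersection} $\bigcap_{\beta,\gamma}\bigl(\varphi^{(\alpha,\beta)}_{(\gamma,\alpha)}\bigr)^{-1}(A_{\alpha,\alpha})$, and the finiteness of $\lambda$ enters because a \emph{finite intersection} of open sets is open. (A secondary point you glossed over: $A_{\alpha,\alpha}$, containing $0$, need not itself be open in $B^0_{\lambda}(S)$; the preimage is nevertheless open because the range of the shift map lies in $S_{\alpha,\alpha}$, so it coincides with the preimage of any open $W\subseteq B^0_{\lambda}(S)$ satisfying $W\cap S_{\alpha,\alpha}=A_{\alpha,\alpha}$.)

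Your diagnosis of where finiteness matters is also inverted. With the correct intersection identity, $(iv)$ holds for \emph{arbitrary} cardinals (arbitrary intersections of closed sets are closed; alternatively, each $A_{\beta,\gamma}$ is relatively closed in the closed slice $S_{\beta,\gamma}$), which is exactly what Remark~\ref{remark-2.5} records; it is $(ii)$ and $(vi)$ that genuinely fail for infinite $\lambda$. The fact that your union-based plan for $(ii)$ would need no finiteness at all (arbitrary unions of open sets are open) is the red flag: it would ``prove'' a statement that Remark~\ref{remark-2.5} says is false for infinite cardinals. Finally, the cofinality half of your $(vi)$ argument --- pulling a given neighbourhood $W\ni 0$ back through the shifts pair by pair, obtaining finitely many neighbourhoods $U^{(\beta,\gamma)}$ of $0_S$ with $U^{(\beta,\gamma)}_{\beta,\gamma}\subseteq W$, and intersecting them --- is correct; but for $\mathscr{B}_0$ to be a base at $0$ its members $\bigcup_{\beta,\gamma}U_{\beta,\gamma}$ must be \emph{open}, and that is precisely statement $(ii)$, so $(vi)$ cannot stand until $(ii)$ is repaired as above.
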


\begin{proof}
$(i)$ Let $W\not\ni 0$ be an open set in $\left(B^0_{\lambda}(S),\tau_{B}\right)$ such that $W\cap S_{\alpha,\alpha}\in\tau_{B}|_{S_{\alpha,\alpha}}$. Then by Lemma~\ref{lemma-2.2}, $W\cap S_{\alpha,\alpha}^*$ is an open subset in $\left(B^0_{\lambda}(S),\tau_{B}\right)$.

By Definition~\ref{definition-1.1} the set $A_{\alpha,\alpha}$ is open for some $\alpha\in \lambda$. Since the map $\varphi^{(\alpha,\gamma)}_{(\delta,\alpha)}\colon B^0_{\lambda}(S)
\rightarrow B^0_{\lambda}(S)$ defined by the formula
$\varphi^{(\alpha,\gamma)}_{(\delta,\alpha)}(x)=(\alpha,1_S,\gamma)\cdot
x\cdot (\delta,1_S,\alpha)$ is continuous, the set $A_{\gamma,\delta}=
\big(\varphi^{(\alpha,\gamma)}_{(\delta,\alpha)}\big)^{-1}(A_{\alpha,\alpha})$ is open in $\left(B^0_{\lambda}(S),\tau_{B}\right)$ for any $\gamma,\delta\in\lambda$ as the full preimage of an open set under a continuous map.

$(ii)$ Let $A\ni 0_S$ be an open subset in $S$ and $W$ be an open subset
in $\left(B^0_{\lambda}(S),\tau_{B}\right)$ such that $W\cap
S_{\alpha,\alpha}=A_{\alpha,\alpha}$ for some $\alpha\in \lambda$.
Since the map $\varphi^{(\alpha,\gamma)}_{(\delta,\alpha)}\colon
B^0_{\lambda}(S) \rightarrow B^0_{\lambda}(S)$ is continuous for
any $\alpha,\gamma,\delta\in\lambda$ we conclude that
\begin{equation*}
    \widetilde{A}_{\gamma,\delta}=A_{\gamma,\delta}\cup
    \bigcup\left\{ S_{\iota,\kappa}\mid {(\iota,\kappa)\in(\lambda\times\lambda)\setminus \{(\gamma,\delta)\}}\right\}=
    \left(\varphi^{(\alpha,\gamma)}_{(\delta,\alpha)}\right)^{-1}(A_{\alpha,\alpha})
\end{equation*}
is an open subset in $\left(B^0_{\lambda}(S),\tau_{B}\right)$. Then since the cardinal $\lambda$ is finite, we have that $\displaystyle\bigcup_{\alpha,\beta\in\lambda} {A_{\alpha,\beta}}=\bigcap_{\gamma,\delta\in\lambda}\widetilde{A}_{\gamma,\delta}$ and this implies statement $(ii)$.

Statements $(iii)-(vi)$ follow from $(i)$ and $(ii)$.
\end{proof}

\begin{remark}\label{remark-2.5}
Note that the statements $(i)$, $(iii)$, $(iv)$ and $(v)$ of Proposition~\ref{proposition-2.4} hold for any infinite cardinal $\lambda$ and their proofs are similar to corresponding statements of Proposition~\ref{proposition-2.4} and follows from Lemma~\ref{lemma-2.2}. However, Example~2 and Lemma~7 of \cite{GutikPavlyk2005} imply that the statements $(ii)$ and $(vi)$ are false for any infinite cardinal $\lambda$ in the case of topological Brandt $\lambda^0$-extensions of topological semigroups in the class of topological semigroups (and hence in the case of topological Brandt $\lambda^0$-extensions of semitopological semigroups in the class of semitopological semigroups).
\end{remark}

Proposition~\ref{proposition-2.4} implies the following theorem which describes the structure of topological Brandt $\lambda^0$-extensions of semitopological monoids with zero in the class of semitopological semigroups for an arbitrary finite cardinal $\lambda\geqslant 1$.

\begin{theorem}\label{theorem-2.6}
For any semitopological monoid $(S,\tau)$ with zero and for any
finite cardinal $\lambda\geqslant 1$  there exists a unique
topological Brandt $\lambda^0$-extension
$\left(B^0_{\lambda}(S),\tau_{B}\right)$ of $(S,\tau)$ in the
class of semitopological semigroups, and the topology $\tau_{B}$
is generated by the base
$\mathscr{B}_B=\bigcup\left\{\mathscr{B}_B(t)\mid t\in
B^0_{\lambda}(S)\right\}$, where:
\begin{itemize}
    \item[$(i)$] $\mathscr{B}_B(t)=\big\{(U(s))_{\alpha,\beta} \setminus\{ 0_S\}\mid U(s)\in \mathscr{B}_S(s)\big\}$, where $t=(\alpha,s,\beta)$ is a non-zero element of    $B^0_{\lambda}(S)$, $\alpha,\beta\in\lambda$;

    \item[$(ii)$] $\mathscr{B}_B(0)=\left\{\bigcup_{\alpha,\beta\in\lambda}    (U(0_S))_{\alpha,\beta} \mid U(0_S)\in\mathscr{B}_S(0_S)\right\}$, where $0$ is the zero of $B^0_{\lambda}(S)$,
\end{itemize}
and $\mathscr{B}_S(s)$ is a base of the topology $\tau$ at the point $s\in S$.
\end{theorem}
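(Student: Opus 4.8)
The plan is to prove uniqueness and existence separately, in both cases leaning on the explicit base $\mathscr{B}_B$ and on Proposition~\ref{proposition-2.4}.

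For \emph{uniqueness} I would argue that the neighbourhood system of any admissible topology is already forced. Suppose $\tau_B$ is any topology on $B^0_\lambda(S)$ for which $\left(B^0_\lambda(S),\tau_B\right)$ is a semitopological semigroup and $\tau_B|_{S_{\alpha,\alpha}}=\tau$ for some $\alpha\in\lambda$; these are exactly the hypotheses of Proposition~\ref{proposition-2.4}. Then statement~$(v)$ of that proposition shows that the family $\mathscr{B}_B(t)$ of $(i)$ is a base of $\tau_B$ at every non-zero point $t=(\alpha,s,\beta)$, and statement~$(vi)$ shows that $\mathscr{B}_B(0)$ of $(ii)$ is a base of $\tau_B$ at $0$. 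Since a topology is determined by a neighbourhood base at each of its points, any two topologies satisfying the hypotheses of the theorem have identical neighbourhood systems, hence coincide and equal the topology generated by $\mathscr{B}_B$.

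For \emph{existence} I would take the family $\mathscr{B}_B$ as defined in $(i)$ and $(ii)$ and verify that it generates a topology $\tau_B$ with the required properties. First I would check the base axioms, reducing an intersection of two basic sets to an intersection in $S$: for two non-zero points of a common slice this is immediate from $\left(U_1(s)\cap U_2(s)\right)_{\alpha,\beta}$; at $0$ one uses the identity $\bigcup_{\alpha,\beta}(U_1)_{\alpha,\beta}\cap\bigcup_{\alpha,\beta}(U_2)_{\alpha,\beta}=\bigcup_{\alpha,\beta}(U_1\cap U_2)_{\alpha,\beta}$; and the mixed case, a zero-neighbourhood met with a slice-neighbourhood, reduces to the same computation inside the single slice $S_{\alpha,\beta}^*$. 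Because $S$ is itself a topological space one can always insert a base member of $\tau$, while finiteness of $\lambda$ guarantees that the zero-neighbourhoods are genuine unions over finitely many slices. The restriction $\tau_B|_{S_{\alpha,\alpha}}=\tau$ then follows by tracing $\mathscr{B}_B$ on $S_{\alpha,\alpha}$ under the canonical bijection $s\mapsto(\alpha,s,\alpha)$: a non-zero basic set traces to $U(s)\setminus\{0_S\}$ and a zero-neighbourhood traces to $(U(0_S))_{\alpha,\alpha}$, which corresponds to $U(0_S)$.

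The main obstacle, and the heart of the existence part, is to show that $\left(B^0_\lambda(S),\tau_B\right)$ is a semitopological semigroup, i.e.\ that every translation is continuous. I would fix $a=(\gamma,c,\delta)$ and analyse the left translation $L_a\colon x\mapsto a\cdot x$ by cases according to the slice of the argument: on a slice $S_{\mu,\nu}^*$ with $\mu\neq\delta$ the map is constantly $0$ and hence continuous; on $S_{\delta,\nu}^*$ it sends $(\delta,s,\nu)\mapsto(\gamma,cs,\nu)$, so continuity at a point follows from continuity of the left translation $s\mapsto cs$ in $S$ (splitting further into the subcases $cs_0\neq 0_S$ and $cs_0=0_S$, where the target is the global zero); and at the point $0$ continuity reduces, using finiteness of $\lambda$, to choosing a single $\tau$-neighbourhood $U(0_S)$ with $c\cdot U(0_S)$ inside a prescribed $\tau$-neighbourhood of $0_S$. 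The case $a=0$ is trivial, and right translations are handled symmetrically. Threading the separately continuous multiplication of $S$ through these cases, while keeping track of when products collapse to the global zero, is the only genuinely delicate point; everything else is bookkeeping built on Lemma~\ref{lemma-2.2} and Proposition~\ref{proposition-2.4}.
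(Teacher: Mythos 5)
Your proposal is correct and takes essentially the same route as the paper: the paper's entire proof is the remark that Proposition~\ref{proposition-2.4} implies the theorem, i.e.\ its statements $(v)$ and $(vi)$ force the neighbourhood base at every point of any admissible topology (uniqueness), with the routine existence verification that you spell out left implicit. The one inaccuracy is where you locate the role of finiteness of $\lambda$: your base generates an admissible semitopological topology for \emph{any} cardinal (a single neighbourhood $V(0_S)$ with $c\cdot V(0_S)\subseteq U(0_S)$ handles all slices simultaneously regardless of $|\lambda|$), and finiteness is needed only because statements $(ii)$ and $(vi)$ of Proposition~\ref{proposition-2.4} fail for infinite $\lambda$ (Remark~\ref{remark-2.5}), that is, only for the uniqueness half.
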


A topological Brandt $\lambda^0$-extension $\left(B^0_{\lambda}(S), \tau_{B}\right)$ of a semitopological semigroup $(S, \tau)$ is called \emph{pseudocompact} (resp., \emph{countably pracompact}, \emph{countably compact}, \emph{sequentially compact}, \emph{compact}, \emph{semiregular}) if its underlying topological space is pseudocompact (resp., countably pracompact, countably compact, sequentially compact, compact, semiregular).

\begin{lemma}\label{lemma-2.6a}
A continuous image of a countably pracompact topological space is a countably pracompact space.
\end{lemma}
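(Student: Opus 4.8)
The plan is to prove the statement directly from the definitions of countably pracompact space and the behaviour of accumulation points under continuous surjections. Let $f\colon X\to Y$ be a continuous surjection, where $X$ is countably pracompact; I may assume $f$ is onto, since a ``continuous image'' is by definition the range of the map equipped with the subspace topology, and I can replace the codomain by $f(X)$. By hypothesis there exists a dense subset $A\subseteq X$ such that $X$ is countably compact at $A$, meaning every infinite subset of $A$ has an accumulation point in $X$. The natural candidate for the witnessing dense set in $Y$ is $f(A)$.

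First I would check that $f(A)$ is dense in $Y$. Since $A$ is dense in $X$, we have $\operatorname{cl}_X(A)=X$, and because $f$ is continuous and surjective, $Y=f(X)=f(\operatorname{cl}_X(A))\subseteq\operatorname{cl}_Y(f(A))$, which gives $\operatorname{cl}_Y(f(A))=Y$. Next I would verify that $Y$ is countably compact at $f(A)$. Let $B\subseteq f(A)$ be an arbitrary infinite subset. The main point is to pull $B$ back to an infinite subset of $A$: for each $b\in B$ choose a point $a_b\in A$ with $f(a_b)=b$, and set $C=\{a_b\mid b\in B\}\subseteq A$. Since the points $b$ are distinct and $f(a_b)=b$, the assignment $b\mapsto a_b$ is injective, so $C$ is infinite. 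By countable compactness of $X$ at $A$, the infinite set $C$ has an accumulation point $x\in X$.

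It then remains to show that $f(x)$ is an accumulation point of $B$ in $Y$. Let $V$ be an arbitrary open neighbourhood of $f(x)$ in $Y$; by continuity $f^{-1}(V)$ is an open neighbourhood of $x$ in $X$, so $f^{-1}(V)$ contains infinitely many points of $C$ (indeed, since $x$ is an accumulation point of the infinite set $C$, every neighbourhood of $x$ meets $C$ in an infinite set). Applying $f$ and using $f(C)=B$, we get that $V$ meets $B$ in an infinite set, so $f(x)$ is an accumulation point of $B$. Hence $Y$ is countably compact at the dense subset $f(A)$, and therefore $Y$ is countably pracompact.

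The only delicate point is the exact notion of ``accumulation point'' being used. The definition in the excerpt says merely that the infinite set has an accumulation point $x$; in a Hausdorff space (and all spaces here are assumed Hausdorff) a point $x$ is an accumulation point of a set $C$ precisely when every neighbourhood of $x$ meets $C$ in an infinite set, which is the form I rely on when passing from $C$ to $B$. The remaining steps are routine neighbourhood-chasing, so I expect no substantive obstacle beyond making sure the infiniteness of the trace on neighbourhoods is preserved under $f$, which follows from $f(C)=B$ and the injectivity of the chosen section $b\mapsto a_b$.
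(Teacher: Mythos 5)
Your proof is correct and takes essentially the same route as the paper's: push the dense set $A$ forward to $f(A)$, lift an infinite $B\subseteq f(A)$ to an infinite set of chosen preimages in $A$, take an accumulation point $x$ of that set, and show $f(x)$ is an accumulation point of $B$. If anything, your final step is slightly more careful than the paper's, since by invoking the $T_1$ fact that every neighbourhood of an accumulation point meets the set in an infinite subset, together with the injectivity of the section $b\mapsto a_b$, you correctly handle the possibility $f(x)\in B$, whereas the paper's negation (``there exists $U(f(x))$ with $B\cap U(f(x))=\varnothing$'') is strictly speaking the negation of adherence rather than of accumulation.
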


\begin{proof}
Let $X$ be a countably pracompact topological space and $f\colon
X\rightarrow Y$ be a continuous map from $X$ into a Hausdorff
topological space $Y$. Without loss of generality we may assume
that $f(X)=Y$. The countable pracompactness of $X$ implies that
there exists a dense subset $A$ in $X$ such that $X$ is countably
pracompact at $A$. Then we have that $f(A)$ is a dense subset of
$Y$ (see the proof of Theorem~1.4.10 from \cite{Engelking1989}).
We fix an arbitrary infinite subset $B\subseteq f(A)$. For every
$b\in B$ we choose an arbitrary point $a\in A$ such that
$f(a)=b$ and denote by $A^*$ the set of all points chosen in such
way. Then $A^*$ is infinite subset of $A$ and hence $A^*$ has an
accumulation point $x$ in $X$.   We claim that $f(x)$ is an
accumulation point of $B$ in $Y$. Otherwise there would exists an
open neighbourhood $U(f(x))$ of $f(x)$ in $Y$ such that $B\cap
U(f(x))=\varnothing$. Then the continuity of the map $f\colon
X\rightarrow Y$ implies that $V=f^{-1}\left(U(f(x))\right)$ is an
open neighbourhood of the point $x$ in $X$ such that
$f^{-1}(B)\cap V=\varnothing$, a contradiction.
\end{proof}

\begin{lemma}\label{lemma-2.7}
Let $\lambda$ be any cardinal $\geqslant 1$. If a topological Brandt $\lambda^0$-extension $\left(B^0_{\lambda}(S), \tau^*_{B}\right)$ of a semitopological monoid $(S, \tau)$ with zero in the class of Hausdorff semitopological semigroups is pseudocompact (resp., countably pracompact, countably compact, sequentially compact, compact), then the topological space $(S, \tau)$ is pseudocompact (resp., countably pracompact, countably compact, sequentially compact, compact).
\end{lemma}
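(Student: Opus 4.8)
The plan is to realize $(S,\tau)$, up to homeomorphism, as a \emph{continuous image} of the whole extension, so that each of the five covering-type properties descends along a single continuous surjection. First I fix, as in Definition~\ref{definition-1.1}(b), a coordinate $\alpha\in\lambda$ for which $\tau^*_B|_{S_{\alpha,\alpha}}=\tau$; then the bijection $s\mapsto(\alpha,s,\alpha)$ (sending $0_S$ to $0$) is a homeomorphism of $(S,\tau)$ onto the subspace $S_{\alpha,\alpha}$ of $\left(B^0_\lambda(S),\tau^*_B\right)$, so it suffices to prove that $S_{\alpha,\alpha}$ carries the required property.

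To produce the surjection I would use the map $\varphi\colon B^0_\lambda(S)\to B^0_\lambda(S)$ defined by $\varphi(x)=(\alpha,1_S,\alpha)\cdot x\cdot(\alpha,1_S,\alpha)$, exactly as the maps used in Lemmas~\ref{lemma-2.1} and \ref{lemma-2.2}. Since $\left(B^0_\lambda(S),\tau^*_B\right)$ is a semitopological semigroup, $\varphi$ is a composition of a left and a right translation and is therefore continuous. A direct computation gives $\varphi((\gamma,s,\delta))=(\alpha,s,\alpha)$ when $\gamma=\delta=\alpha$ and $\varphi((\gamma,s,\delta))=0$ otherwise, while $\varphi(0)=0$; hence $\varphi$ maps $B^0_\lambda(S)$ onto $S_{\alpha,\alpha}$ and restricts to the identity on $S_{\alpha,\alpha}$. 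Thus $\varphi\colon\left(B^0_\lambda(S),\tau^*_B\right)\to S_{\alpha,\alpha}$ is a continuous retraction, and in particular a continuous surjection onto the (Hausdorff) space $S_{\alpha,\alpha}$.

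It then remains to check that each property in the statement passes from $\left(B^0_\lambda(S),\tau^*_B\right)$ to its continuous image $S_{\alpha,\alpha}$. For compactness, countable compactness and sequential compactness this is routine: pulling back a (countable) open cover along $\varphi$ and using surjectivity transfers a finite subcover, while the image of a convergent subsequence converges. The countably pracompact case is precisely Lemma~\ref{lemma-2.6a}.

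The one case that needs genuine care --- and the main obstacle --- is pseudocompactness, because, unlike the compact-type properties, pseudocompactness is \emph{not} inherited by closed subspaces, so the naive route through the closed subspace $S_{\alpha,\alpha}$ of Lemma~\ref{lemma-2.2} is unavailable and one really must use the surjection $\varphi$. Here I would invoke the Hausdorff characterization quoted after the list of definitions: a Hausdorff space is pseudocompact if and only if every locally finite family of nonempty open sets is finite. Given such a family $\{V_s\mid s\in\mathscr{S}\}$ in $S_{\alpha,\alpha}$, the family $\{\varphi^{-1}(V_s)\mid s\in\mathscr{S}\}$ consists of nonempty open sets (by surjectivity and continuity), is locally finite in $B^0_\lambda(S)$ (continuous preimages preserve local finiteness, since $\varphi^{-1}(U)\cap\varphi^{-1}(V_s)=\varphi^{-1}(U\cap V_s)$ for a neighbourhood $U$ witnessing local finiteness of the image), and the assignment $V_s\mapsto\varphi^{-1}(V_s)$ is injective because $\varphi$ is onto. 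Pseudocompactness of $\left(B^0_\lambda(S),\tau^*_B\right)$ then forces this pulled-back family to be finite, whence $\{V_s\}$ is finite and $S_{\alpha,\alpha}$ is pseudocompact. Transporting each conclusion back along the homeomorphism $S\cong S_{\alpha,\alpha}$ completes the proof.
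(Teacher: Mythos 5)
Your proof is correct and follows essentially the same route as the paper: the paper's own argument also applies the continuous map $\varphi^{(\alpha,\alpha)}_{(\alpha,\alpha)}(x)=(\alpha,1_S,\alpha)\cdot x\cdot(\alpha,1_S,\alpha)$, notes that its image is $S_{\alpha,\alpha}$ (on which the topology is $\tau$ by Definition~\ref{definition-1.1}), and invokes preservation of all five properties under continuous images, citing Engelking and Lemma~\ref{lemma-2.6a}. The only difference is that you verify the pseudocompact case by hand via locally finite families of nonempty open sets, which is a reasonable precaution since the space here is only assumed Hausdorff rather than Tychonoff.
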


\begin{proof}
We observe that any continuous image of a compact (resp., sequentially compact, countably compact, countably pracompact, pseudocompact) space is again a compact (resp., sequentially compact, countably compact, countably pracompact, pseudocompact) space (see \cite[Chapter~3]{Engelking1989} and Lemma~\ref{lemma-2.6a}). Let $\alpha\in\lambda$ be such that $\tau^*_{B}|_{S_{\alpha,\alpha}}=\tau$. Since $\left(B^0_{\lambda}(S), \tau^*_{B}\right)$ is a semitopological semigroup we have that the map $\varphi^{(\alpha,\alpha)}_{(\alpha,\alpha)}\colon B^0_{\lambda}(S)\rightarrow B^0_{\lambda}(S)$ defined by the formula $\varphi^{(\alpha,\alpha)}_{(\alpha,\alpha)}(x)=(\alpha,1_S,\alpha)\cdot x\cdot(\alpha,1_S,\alpha)$ is continuous and $\varphi^{(\alpha,\alpha)}_{(\alpha,\alpha)}\left(B^0_{\lambda}(S)\right)=S_{\alpha,\alpha}$. Then Definition~\ref{definition-1.1} completes the proof of the lemma.
\end{proof}

Next proposition will prove useful for constructing topology at zero of Brandt $\lambda^0$-extension for an infinite cardinal $\lambda$ in the class of countably compact semitopological semigroups.

\begin{proposition}\label{proposition-2.7a}
Let $\lambda$ be any cardinal $\geqslant 1$. Let
$\left(B^0_{\lambda}(S), \tau^*_{B}\right)$ be a topological
Brandt  $\lambda^0$-ex\-ten\-sion of a semitopological monoid $(S,
\tau)$ with zero in the class of semitopological semigroups. If
$\left(B^0_{\lambda}(S), \tau^*_{B}\right)$ is a Hausdorff
countably compact space then for every open neighbourhood $U(0)$
of the zero in $\left(B^0_{\lambda}(S), \tau^*_{B}\right)$ there
exist finitely many pairs of indices $(\alpha,\beta)$ such that
$S_{\alpha,\beta}\nsubseteq U(0)$.
\end{proposition}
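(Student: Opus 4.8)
The plan is to argue by contradiction, so I would suppose there is an open neighbourhood $U(0)$ of the zero for which infinitely many pairs $(\alpha,\beta)$ satisfy $S_{\alpha,\beta}\nsubseteq U(0)$, and derive a contradiction with countable compactness. The first step is to manufacture from this assumption a countably infinite subset of $B^0_{\lambda}(S)$ that is ``spread out'' across distinct copies and avoids $U(0)$. For each offending pair, observe that since $0\in U(0)$ while $0\in S_{\alpha,\beta}$, any witness to $S_{\alpha,\beta}\nsubseteq U(0)$ must be a non-zero element; hence I may choose a point $x_{\alpha,\beta}\in S_{\alpha,\beta}^{*}\setminus U(0)$. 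Selecting countably many of the offending pairs yields a countably infinite set $D=\{x_{\alpha,\beta}\}$, whose points are pairwise distinct because the sets $S_{\alpha,\beta}^{*}$ are pairwise disjoint. By construction $D\subseteq B^0_{\lambda}(S)\setminus U(0)$, and moreover $\big|D\cap S_{\gamma,\delta}^{*}\big|\leqslant 1$ for every pair $(\gamma,\delta)$.

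Next I would invoke countable compactness. Since $\left(B^0_{\lambda}(S),\tau^*_{B}\right)$ is Hausdorff and countably compact, the infinite set $D$ has an accumulation point $y$; and because the space is $T_1$, every neighbourhood of $y$ then contains infinitely many points of $D$. The locating step is to pin down where $y$ lies: as $D$ is contained in the closed set $B^0_{\lambda}(S)\setminus U(0)$, its accumulation point $y$ also lies in this set, so in particular $y\neq 0$.

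Because $y$ is a non-zero element, Lemma~\ref{lemma-2.2} supplies an open neighbourhood $U_{y}$ of $y$ with $U_{y}\subseteq S_{\gamma,\delta}^{*}$, where $(\gamma,\delta)$ is the unique pair with $y\in S_{\gamma,\delta}^{*}$. This is the crux: the neighbourhood $U_{y}$ is trapped inside a single copy $S_{\gamma,\delta}^{*}$, yet $D$ meets that copy in at most one point, so $U_{y}\cap D$ is finite. This contradicts the fact that every neighbourhood of the accumulation point $y$ contains infinitely many points of $D$. The contradiction shows that only finitely many pairs $(\alpha,\beta)$ can satisfy $S_{\alpha,\beta}\nsubseteq U(0)$, which is the assertion.

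The main obstacle, and the reason Hausdorffness (through Lemma~\ref{lemma-2.2}) is indispensable, is precisely this localization of the accumulation point: one must guarantee that a neighbourhood of $y$ can be confined to a single $S_{\gamma,\delta}^{*}$, so that the disjointness of the copies forces $U_{y}$ to miss all but at most one point of $D$. Without the openness of $S_{\gamma,\delta}^{*}$ the accumulation point could ``see'' points of $D$ arriving from many different copies simultaneously, and the counting argument would collapse.
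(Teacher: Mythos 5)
Your proof is correct and follows essentially the same route as the paper's: assume infinitely many offending pairs, pick one non-zero witness per pair outside $U(0)$, and use Lemma~\ref{lemma-2.2} together with countable compactness (via accumulation points) to reach a contradiction. You merely spell out two details the paper leaves implicit---that the zero cannot be an accumulation point because $D$ lies in the closed set $B^0_{\lambda}(S)\setminus U(0)$, and that in a $T_1$ space an accumulation point of $D$ has every neighbourhood meeting $D$ in infinitely many points---so your write-up is a slightly more careful version of the same argument.
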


\begin{proof}
Suppose to the contrary that there exists an open neighbourhood
$U(0)$  of the zero $0$ in $\left(B^0_{\lambda}(S),
\tau^*_{B}\right)$ such that $S_{\alpha,\beta}\nsubseteq U(0)$ for
infinitely many pairs of indices $(\alpha,\beta)$. Then for every
such $S_{\alpha,\beta}$ we fix a unique point $x_{\alpha,\beta}\in
S_{\alpha,\beta}\setminus U(0)$ and put
$A=\bigcup\{x_{\alpha,\beta}\}$. Then $A$ is infinite and
Lemma~\ref{lemma-2.2} implies that the set $A$ has no accumulation
point in $\left(B^0_{\lambda}(S), \tau^*_{B}\right)$. This
contradicts Theorem~3.10.3 of \cite{Engelking1989}. The obtained
contradiction implies the statement of the proposition.
\end{proof}

Proposition~\ref{proposition-2.7a} implies the following corollary:

\begin{corollary}\label{corollary-2.7b}
Let $\lambda$ be any cardinal $\geqslant 1$. Let
$\left(B^0_{\lambda}(S), \tau^*_{B}\right)$  be a topological
Brandt $\lambda^0$-extension of a semitopological monoid $(S,
\tau)$ with zero in the class of semitopological semigroups. If
$\left(B^0_{\lambda}(S), \tau^*_{B}\right)$ is a Hausdorff
sequentially compact (compact) space then for
every open neighbourhood $U(0)$ of the zero in
$\left(B^0_{\lambda}(S), \tau^*_{B}\right)$ there exist finitely
many pairs of indices $(\alpha,\beta)$ such that
$S_{\alpha,\beta}\nsubseteq U(0)$.
\end{corollary}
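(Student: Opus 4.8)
The plan is to observe that the hypothesis of this corollary is strictly stronger than that of Proposition~\ref{proposition-2.7a}, so that the entire statement follows by a one-line reduction rather than by any new argument. The key fact I would invoke is recorded already in the preliminaries: every compact space and every sequentially compact space is countably compact (see \cite{Arkhangelskii1992} and \cite{Engelking1989}). Consequently, if the topological space $\left(B^0_{\lambda}(S), \tau^*_{B}\right)$ is Hausdorff sequentially compact, or Hausdorff compact, then in particular it is a Hausdorff countably compact space.

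With this observation in place, I would simply apply Proposition~\ref{proposition-2.7a} to the Hausdorff countably compact space $\left(B^0_{\lambda}(S), \tau^*_{B}\right)$. That proposition asserts exactly that for every open neighbourhood $U(0)$ of the zero there exist only finitely many pairs of indices $(\alpha,\beta)$ with $S_{\alpha,\beta}\nsubseteq U(0)$, which is the conclusion we seek. No reformulation of the indices or of the neighbourhood $U(0)$ is required, since the statements of the corollary and of the proposition are verbatim identical apart from the stated compactness hypothesis.

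I do not anticipate a genuine obstacle here: the corollary is a direct specialization of Proposition~\ref{proposition-2.7a} obtained by replacing ``countably compact'' with the formally stronger properties of sequential compactness or compactness. The only step worth making explicit is the chain of implications among the compactness-type properties, namely that both sequential compactness and compactness entail countable compactness; once that inclusion is noted, the proof is complete with no further calculation.
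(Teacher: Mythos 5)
Your proof is correct and matches the paper's approach exactly: the paper derives Corollary~\ref{corollary-2.7b} directly from Proposition~\ref{proposition-2.7a}, using precisely the fact (stated in the preliminaries) that every Hausdorff compact or sequentially compact space is countably compact. Nothing further is needed.
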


The following theorem describes the structure of Hausdorff countably compact topological Brandt $\lambda^0$-exten\-si\-ons of semitopological monoids with zero in the class of semitopological semigroups.

\begin{theorem}\label{theorem-2.7c}
For any Hausdorff countably compact  semitopological  monoid
$(S,\tau)$ with zero and for any cardinal $\lambda\geqslant 1$
there exists a unique Hausdorff countably compact  topological
Brandt $\lambda^0$-extension
$\left(B^0_{\lambda}(S),\tau_{B}^S\right)$ of $(S,\tau)$ in the
class of semitopological semigroups, and the topology $\tau_{B}^S$
is generated by the base
$\mathscr{B}_B=\bigcup\left\{\mathscr{B}_B(t)\mid t\in
B^0_{\lambda}(S)\right\}$, where:
\begin{itemize}
    \item[$(i)$] $\mathscr{B}_B(t)=\big\{(U(s))_{\alpha,\beta} \setminus\{ 0_S\}\mid U(s)\in \mathscr{B}_S(s)\big\}$, where $t=(\alpha,s,\beta)$ is a non-zero element of    $B^0_{\lambda}(S)$, $\alpha,\beta\in\lambda$;

    \item[$(ii)$] $\mathscr{B}_B(0)=\Big\{U_{A}(0)= \bigcup_{(\alpha,\beta)\in(\lambda\times\lambda) \setminus A}S_{\alpha,\beta}\cup \bigcup_{(\gamma,\delta)\in A} (U(0_S))_{\gamma,\delta} \mid A \hbox{~is a finite subset of~} \lambda\times\lambda$ and  $U(0_S)\in\mathscr{B}_S(0_S)\Big\}$, where $0$ is the zero of $B^0_{\lambda}(S)$,
\end{itemize}
and $\mathscr{B}_S(s)$ is a base of the topology $\tau$ at the point $s\in S$.
\end{theorem}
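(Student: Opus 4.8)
The plan is to split the argument into \textbf{existence} and \textbf{uniqueness}. For existence I would verify, in order, that $\mathscr{B}_B$ is a base, that it induces the prescribed trace on each cell, that separate continuity holds, and finally that the space is Hausdorff and countably compact. First I would check that $\mathscr{B}_B$ is a base for a topology on $B^0_\lambda(S)$: covering is clear, and the only nontrivial instance of the intersection axiom occurs at $0$, where given $U_{A_1}(0)$ and $U_{A_2}(0)$ built from $U_1(0_S),U_2(0_S)\in\mathscr{B}_S(0_S)$ I would take $A=A_1\cup A_2$ and a basic $U(0_S)\subseteq U_1(0_S)\cap U_2(0_S)$ and observe $U_A(0)\subseteq U_{A_1}(0)\cap U_{A_2}(0)$; that every point of a basic set has a basic neighbourhood inside it is a short check splitting on whether the cell index lies in $A$. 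Computing the trace of $\mathscr{B}_B$ on a fixed $S_{\alpha,\beta}$ then reproduces exactly the neighbourhoods of $\tau$ transported along $(\alpha,s,\beta)\leftrightarrow s$, $0\leftrightarrow 0_S$ (for $0$ one uses that $U_A(0)\cap S_{\alpha,\beta}$ equals $(U(0_S))_{\alpha,\beta}$ when $(\alpha,\beta)\in A$ and all of $S_{\alpha,\beta}$ otherwise); in particular condition (b) of Definition~\ref{definition-1.1} holds and each $S_{\alpha,\beta}$ is homeomorphic to $(S,\tau)$.

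The heart of existence is separate continuity. Fixing a translator $g=(\gamma,b,\delta)$ (the case $g=0$ giving a constant map), I would verify continuity of $x\mapsto g\cdot x$ cell by cell. Away from $0$ this reduces to separate continuity of the multiplication of $S$, together with the observation that when $g\cdot(\alpha,s,\beta)=0$ (index mismatch) a whole cell neighbourhood is sent into any $U_A(0)$. The delicate point is continuity at $0$: given a target $U_A(0)$ with data $(A,U(0_S))$, only the finitely many target cells $(\gamma,\beta)\in A$ impose a constraint, and for these I would use continuity of left translation by $b$ at $0_S$ to pick $U'(0_S)$ with $bU'(0_S)\subseteq U(0_S)$ and set $A'$ to be the corresponding finite set of source cells, after which $U_{A'}(0)$ is carried into $U_A(0)$. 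This bookkeeping, tracking how the index shift induced by $g$ interacts with the finite exceptional set $A$, is the main obstacle; the right-translation case is symmetric. Hausdorffness is then routine: two non-zero points are separated either inside their common cell (using Hausdorffness of $S$) or by disjoint cells, while a non-zero $(\alpha,s,\beta)$ and $0$ are separated using disjoint $\tau$-neighbourhoods of $s$ and $0_S$.

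For countable compactness I would use the criterion (Theorem~3.10.3 of \cite{Engelking1989}) that in a Hausdorff space it suffices that every infinite $B\subseteq B^0_\lambda(S)$ has an accumulation point, and argue by a dichotomy. If $B$ meets infinitely many cells $S^*_{\alpha,\beta}$, then $0$ is an accumulation point, since any $U_A(0)$ omits only the finitely many cells indexed by $A$ and hence wholly contains infinitely many cells meeting $B$. Otherwise $B$ accumulates inside a single cell $S^*_{\alpha,\beta}$, and transporting to $(S,\tau)$ via the homeomorphism above, countable compactness of $S$ supplies an accumulation point there, which is one in $B^0_\lambda(S)$ because $S_{\alpha,\beta}$ is a subspace.

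Finally, for \textbf{uniqueness}, let $\tau^*_B$ be any Hausdorff countably compact topological Brandt $\lambda^0$-extension of $(S,\tau)$. By Lemma~\ref{lemma-2.2} each $S^*_{\alpha,\beta}$ is $\tau^*_B$-open, and by Definition~\ref{definition-1.1}(b) together with Lemma~\ref{lemma-2.1} its subspace topology is that of $(S\setminus\{0_S\},\tau)$; hence the $\tau^*_B$- and $\tau^S_B$-neighbourhoods of every non-zero point coincide. Moreover each basic set $U_A(0)$ is $\tau^*_B$-open, because its complement $\bigcup_{(\gamma,\delta)\in A}(S\setminus U(0_S))_{\gamma,\delta}$ is a finite union of sets closed in $B^0_\lambda(S)$ (each $S_{\gamma,\delta}$ is $\tau^*_B$-closed by Lemma~\ref{lemma-2.2}, and $S\setminus U(0_S)$ is $\tau$-closed and avoids $0_S$) and does not contain $0$; this yields $\tau^S_B\subseteq\tau^*_B$. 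For the reverse inclusion I would invoke Proposition~\ref{proposition-2.7a}: every $\tau^*_B$-neighbourhood $U(0)$ of $0$ omits all but finitely many cells, so intersecting its traces on those finitely many cells gives a common $\tau$-neighbourhood $U(0_S)$ with $U_A(0)\subseteq U(0)$, whence $U(0)$ is also a $\tau^S_B$-neighbourhood. Thus $\tau^*_B=\tau^S_B$, completing the proof.
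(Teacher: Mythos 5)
Your proposal is correct and takes essentially the same route as the paper: uniqueness is forced at non-zero points by Lemma~\ref{lemma-2.2} and Remark~\ref{remark-2.5} and at zero by Proposition~\ref{proposition-2.7a}, while existence rests on the same finite-exceptional-set bookkeeping for separate continuity (your choice of $A'$ is exactly the paper's passage from $K$ to $K_1\times K_1$). The only differences are ones of completeness, in your favour: the paper dispatches finite $\lambda$ via Theorem~\ref{theorem-2.6} and leaves implicit the base axioms, the Hausdorffness and countable compactness of $\left(B^0_{\lambda}(S),\tau_{B}^S\right)$, and the $\tau^*_B$-openness of each $U_A(0)$ in a competing topology, all of which you verify explicitly.
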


\begin{proof}
In the case when $\lambda$ is a finite cardinal, Theorem~\ref{theorem-2.6} and the definition of a countably compact space imply the statements of the theorem. Also, in this case the proof of the separate continuity of the semigroup operation in $\left(B^0_{\lambda}(S),\tau_{B}^S\right)$ is trivial, and hence we omit it.

Suppose that $\lambda$ is an infinite cardinal. Then statement $(i)$ follows from Remark~\ref{remark-2.5} and Proposition~\ref{proposition-2.7a} implies that $\mathscr{B}_B(0)$ is a base of the topology $\tau_{B}^S$ at the zero $0\in B^0_{\lambda}(S)$. The proof is completed by showing that the semigroup operation in $\left(B^0_{\lambda}(S),\tau_{B}^S\right)$ is separately continuous. We consider only the cases $0\cdot (\alpha,s,\beta)$ and $ (\alpha,s,\beta)\cdot 0$, because in other cases the proof of the separate continuity of the semigroup operation in $\left(B^0_{\lambda}(S),\tau_{B}^S\right)$ is trivial.

Let $K$ be an arbitrary finite subset in $\lambda\times\lambda$
and let $U(0_S)$  be an arbitrary open neighbourhood of the zero
$0_S$ in $(S,\tau)$. Then there exists an open neighbourhood
$V(0_S)$ in $(S,\tau)$ such that $s\cdot V(0_S)\subseteq U(0_S)$
and $V(0_S)\cdot s\subseteq U(0_S)$. Let $K_1$ be such subset of
$\lambda$ that $K\cup\{(\alpha,\beta), (\beta,\alpha)\}\subseteq
K_1\times K_1$. Then we have that
\begin{equation*}
    V_{K_1\times K_1}(0)\cdot (\alpha,s,\beta)\subseteq U_K(0) \qquad \hbox{and} \qquad
    (\alpha,s,\beta)\cdot V_{K_1\times K_1}(0)\subseteq U_K(0).
\end{equation*}
This completes the proof of the theorem.
\end{proof}

Theorem~\ref{theorem-2.7c} implies the following corollary:

\begin{corollary}\label{corollary-2.7d}
For any Hausdorff sequentially compact (resp., compact)
semitopological monoid $(S,\tau)$ with zero and for any cardinal
$\lambda\geqslant 1$ there exists a unique Hausdorff sequentially
compact (resp., compact) topological Brandt $\lambda^0$-extension
$\left(B^0_{\lambda}(S),\tau_{B}^S\right)$ of $(S,\tau)$ in the
class of semitopological semigroups, and the topology $\tau_{B}^S$
is generated by the base
$\mathscr{B}_B=\bigcup\left\{\mathscr{B}_B(t)\mid t\in
B^0_{\lambda}(S)\right\}$, where:
\begin{itemize}
    \item[$(i)$] $\mathscr{B}_B(t)=\big\{(U(s))_{\alpha,\beta} \setminus\{ 0_S\}\mid U(s)\in \mathscr{B}_S(s)\big\}$, where $t=(\alpha,s,\beta)$ is a non-zero element of    $B^0_{\lambda}(S)$, $\alpha,\beta\in\lambda$;

    \item[$(ii)$]  $\mathscr{B}_B(0)=\Big\{U_{A}(0)= \bigcup_{(\alpha,\beta)\in(\lambda\times\lambda) \setminus A}S_{\alpha,\beta}\cup \bigcup_{(\gamma,\delta)\in A} (U(0_S))_{\gamma,\delta} \mid A \hbox{~is a finite subset of~} \lambda\times\lambda$ and  $U(0_S)\in\mathscr{B}_S(0_S)\Big\}$, where $0$ is the zero of $B^0_{\lambda}(S)$,
\end{itemize}
and $\mathscr{B}_S(s)$ is a base of the topology $\tau$ at the point $s\in S$.
\end{corollary}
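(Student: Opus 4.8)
The plan is to deduce the corollary from Theorem~\ref{theorem-2.7c} by exploiting the implications that every sequentially compact space and every compact space is countably compact. Since $(S,\tau)$ is sequentially compact (resp.\ compact), it is in particular Hausdorff countably compact, so Theorem~\ref{theorem-2.7c} already furnishes a unique Hausdorff countably compact topological Brandt $\lambda^0$-extension $\left(B^0_{\lambda}(S),\tau_{B}^S\right)$ whose topology is generated by precisely the base $\mathscr{B}_B$ described in statements $(i)$ and $(ii)$. Thus the description of the topology, the fact that it is a semitopological semigroup, and the verification of the separate continuity of the operation are all inherited verbatim from Theorem~\ref{theorem-2.7c}. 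What remains is, first, to strengthen ``countably compact'' to ``sequentially compact'' (resp.\ ``compact'') for this concrete extension, and second, to upgrade the uniqueness accordingly.

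For the compact case I would argue directly from the base at zero. Given an open cover of $B^0_{\lambda}(S)$, choose a member containing $0$; it contains a basic neighbourhood $U_A(0)$ with $A\subseteq\lambda\times\lambda$ finite, and by construction $U_A(0)\supseteq S_{\alpha,\beta}$ for every $(\alpha,\beta)\notin A$. Hence the part of $B^0_{\lambda}(S)$ not yet covered is contained in the \emph{finite} union $\bigcup_{(\gamma,\delta)\in A}\bigl(S_{\gamma,\delta}\setminus (U(0_S))_{\gamma,\delta}\bigr)$. By Lemma~\ref{lemma-2.1} together with Definition~\ref{definition-1.1} each $S_{\gamma,\delta}$ is homeomorphic to the compact space $S$, and under this homeomorphism $(U(0_S))_{\gamma,\delta}$ corresponds to the open set $U(0_S)$, so each $S_{\gamma,\delta}\setminus (U(0_S))_{\gamma,\delta}$ is closed in $S_{\gamma,\delta}$ and therefore compact. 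Consequently this residual set is compact and admits a finite subcover; adjoining the chosen member yields a finite subcover of the whole space, proving compactness.

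For the sequentially compact case I would split an arbitrary sequence $\{x_i\}_{i\in\omega}$ into three cases. If infinitely many $x_i$ equal $0$, a constant subsequence converges to $0$. If some block $S_{\alpha,\beta}^*$ contains infinitely many terms, then, since by Lemma~\ref{lemma-2.1} the subspace $S_{\alpha,\beta}$ is homeomorphic to the sequentially compact space $S$, that subsequence has a further subsequence converging inside $S_{\alpha,\beta}$, and this convergence persists in $B^0_{\lambda}(S)$ because $S_{\alpha,\beta}$ carries the subspace topology. Otherwise each block meets the sequence in only finitely many terms, so one can pass to a subsequence $\{x_{i_k}\}$ lying in blocks $S_{\alpha_k,\beta_k}^*$ with all index pairs $(\alpha_k,\beta_k)$ distinct; for every basic neighbourhood $U_A(0)$ the finite set $A$ contains all but finitely many of these pairs, whence $x_{i_k}\in U_A(0)$ eventually and $x_{i_k}\to 0$. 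In every case a convergent subsequence exists.

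Finally, for uniqueness, any Hausdorff sequentially compact (resp.\ compact) topological Brandt $\lambda^0$-extension of $(S,\tau)$ in the class of semitopological semigroups is, a fortiori, Hausdorff countably compact, and therefore must coincide with the unique extension supplied by Theorem~\ref{theorem-2.7c}; combined with the existence just established, this gives the asserted uniqueness. The one genuinely non-formal step is the sequential-compactness verification, and within it the third case---guaranteeing that a sequence spread over infinitely many blocks actually converges to the zero---which is exactly where the special shape of the neighbourhood base $\mathscr{B}_B(0)$, each neighbourhood omitting only finitely many full blocks (cf.\ Corollary~\ref{corollary-2.7b}), is indispensable.
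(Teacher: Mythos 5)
Your proposal is correct and takes essentially the paper's route: the paper deduces this corollary directly from Theorem~\ref{theorem-2.7c}, leaving implicit precisely the verifications you supply (that the canonical extension $\left(B^0_{\lambda}(S),\tau_{B}^S\right)$ inherits compactness, resp.\ sequential compactness, from $S$ via the base at zero, and that uniqueness transfers because both properties imply countable compactness). One wording slip in your third case: the finite set $A$ contains \emph{only finitely many} of the distinct pairs $(\alpha_k,\beta_k)$ (not ``all but finitely many''), so that all but finitely many of the blocks $S_{\alpha_k,\beta_k}$ lie entirely inside $U_A(0)$; the intended conclusion $x_{i_k}\to 0$ is unaffected.
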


We proceed to show that the statement analogous to Proposition~\ref{proposition-2.7a} holds in case $B^0_{\lambda}(S)$ is a semiregular pseudocompact space.

\begin{proposition}\label{proposition-2.8}
Let $\lambda$ be any cardinal $\geqslant 1$. Let
$\left(B^0_{\lambda}(S), \tau^*_{B}\right)$ be a topological
Brandt $\lambda^0$-ex\-ten\-sion of a semitopological monoid $(S,
\tau)$ with zero in the class of semitopological semigroups. If
$\left(B^0_{\lambda}(S), \tau^*_{B}\right)$ is a semiregular
pseudocompact space then for every open neighbourhood $U(0)$ of
the zero in $\left(B^0_{\lambda}(S), \tau^*_{B}\right)$ there
exist finitely many pairs of indices $(\alpha,\beta)$ such that
$S_{\alpha,\beta}\nsubseteq U(0)$.
\end{proposition}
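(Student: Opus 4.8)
The plan is to argue by contradiction in the spirit of Proposition~\ref{proposition-2.7a}, but to replace the use of accumulation points (which detect countable compactness) by the construction of an infinite \emph{locally finite} family of non-empty open sets, which is forbidden in a Hausdorff pseudocompact space by the characterisation recorded in the introduction (a Hausdorff space is pseudocompact if and only if every locally finite family of non-empty open subsets is finite). So I would suppose to the contrary that there is an open neighbourhood $U(0)$ of the zero with $S_{\alpha,\beta}\nsubseteq U(0)$ for infinitely many pairs $(\alpha,\beta)$, and aim to build such a forbidden family.

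The first step is to exploit semiregularity to arrange that $U(0)$ is regular open. Since $\left(B^0_{\lambda}(S),\tau^*_{B}\right)$ has a base of regular open subsets, I may choose a regular open $U'(0)$ with $0\in U'(0)\subseteq U(0)$; because $U'(0)\subseteq U(0)$, the relation $S_{\alpha,\beta}\nsubseteq U(0)$ forces $S_{\alpha,\beta}\nsubseteq U'(0)$, so the collection of ``bad'' pairs for $U'(0)$ is at least as large and hence still infinite. Thus I may assume from the outset that $U(0)=\operatorname{int}_{B^0_{\lambda}(S)}\big(\operatorname{cl}_{B^0_{\lambda}(S)}(U(0))\big)$, writing $\operatorname{cl}$ and $\operatorname{int}$ for closure and interior in $\left(B^0_{\lambda}(S),\tau^*_{B}\right)$ from now on.

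The key step --- and the only place where semiregularity does genuine work --- is to upgrade the statement $S_{\alpha,\beta}\nsubseteq U(0)$ into $S_{\alpha,\beta}^*\setminus\operatorname{cl}(U(0))\neq\varnothing$. Indeed, by Lemma~\ref{lemma-2.2} the slice $S_{\alpha,\beta}^*$ is open; were it contained in $\operatorname{cl}(U(0))$, then, being open, it would lie in $\operatorname{int}\big(\operatorname{cl}(U(0))\big)=U(0)$, and since $0\in U(0)$ this would yield $S_{\alpha,\beta}=S_{\alpha,\beta}^*\cup\{0\}\subseteq U(0)$, contradicting badness. Hence for every bad pair the set $O_{\alpha,\beta}:=S_{\alpha,\beta}^*\cap\big(B^0_{\lambda}(S)\setminus\operatorname{cl}(U(0))\big)$ is a \emph{non-empty open} set, and there are infinitely many such $O_{\alpha,\beta}$.

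It remains to check that $\{O_{\alpha,\beta}\}$ is locally finite, which furnishes the contradiction. At the zero, $U(0)$ itself is a neighbourhood meeting no member, since $O_{\alpha,\beta}\cap U(0)\subseteq\big(B^0_{\lambda}(S)\setminus\operatorname{cl}(U(0))\big)\cap U(0)=\varnothing$. At a non-zero point $(\gamma,s,\delta)$, Lemma~\ref{lemma-2.2} supplies an open neighbourhood contained in $S_{\gamma,\delta}^*$, and since the slices $S_{\alpha,\beta}^*$ are pairwise disjoint for distinct pairs, this neighbourhood meets at most the single member $O_{\gamma,\delta}$. Therefore $\{O_{\alpha,\beta}\}$ is an infinite locally finite family of non-empty open subsets of the Hausdorff pseudocompact space $\left(B^0_{\lambda}(S),\tau^*_{B}\right)$, which is impossible. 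I expect the main obstacle to be precisely the identification of the regular-open reduction as the device that makes the family simultaneously \emph{infinite} (through the non-emptiness argument of the previous paragraph) and \emph{locally finite at the zero} (through avoidance of $\operatorname{cl}(U(0))$); the remaining verifications are routine bookkeeping on the disjointness of the slices $S_{\alpha,\beta}^*$ provided by Lemma~\ref{lemma-2.2}.
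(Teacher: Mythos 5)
Your proof is correct and takes essentially the same route as the paper's: both use semiregularity to pass to a regular open neighbourhood, use Lemma~\ref{lemma-2.2} together with regular openness to show that each bad pair yields a non-empty open set $S^*_{\alpha,\beta}\setminus\operatorname{cl}_{B^0_{\lambda}(S)}(U(0))$, and contradict pseudocompactness with the resulting infinite locally finite family of non-empty open sets. The only difference is organizational: the paper first proves finiteness of the pairs with $S_{\alpha,\beta}\nsubseteq\operatorname{cl}_{B^0_{\lambda}(S)}(U(0))$ and then upgrades $\operatorname{cl}_{B^0_{\lambda}(S)}(U(0))$ to $U(0)$ via regular openness, whereas you fold that same upgrade (in contrapositive form) into a single contradiction argument.
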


\begin{proof}
Let $\mathscr{B}(0)$ be a base of the topology $\tau^*_{B}$ on
$B^0_{\lambda}(S)$ which consists of  regular open subsets of
$\left(B^0_{\lambda}(S), \tau^*_{B}\right)$. Fix an arbitrary
$U(0)\in\mathscr{B}(0)$. We claim that there exist finitely many
pairs of indices $(\alpha,\beta)$ such that
$S_{\alpha,\beta}\nsubseteq
\operatorname{cl}_{B^0_{\lambda}(S)}(U(0))$. Otherwise, by
Lemma~\ref{lemma-2.2} we have that
$\mathscr{P}=\left\{S_{\alpha,\beta}^*\setminus
\operatorname{cl}_{B^0_{\lambda}(S)}(U(0))\mid
\alpha,\beta\in\lambda\right\}$ is a family of open subsets of the
topological space $\left(B^0_{\lambda}(S), \tau^*_{B}\right)$.
Simple verifications show that $\mathscr{P}$ is an infinite
locally finite family, which contradicts the pseudocompactness of
the space $\left(B^0_{\lambda}(S), \tau^*_{B}\right)$. The
obtained contradiction implies that there exist finitely many pais
of indices $(\alpha,\beta)$ such that $S_{\alpha,\beta}\nsubseteq
\operatorname{cl}_{B^0_{\lambda}(S)}(U(0))$.

Let $V(0)$ be any open neighbourhood of the zero $0$ in
$\left(B^0_{\lambda}(S), \tau^*_{B}\right)$.  Then there exists an
element $U(0)$ of the base $\mathscr{B}(0)$ of the topology
$\tau^*_{B}$ on $B^0_{\lambda}(S)$ which consists of regular open
subsets of $\left(B^0_{\lambda}(S), \tau^*_{B}\right)$ such that
$V(0)\subseteq U(0)$ and by the above there exist finitely many pairs of
indices $(\alpha,\beta)$ such that $S_{\alpha,\beta}\nsubseteq
\operatorname{cl}_{B^0_{\lambda}(S)}(U(0))$. Consider an arbitrary set
$S_{\alpha,\beta}$ such that $S_{\alpha,\beta}\subseteq
\operatorname{cl}_{B^0_{\lambda}(S)}(U(0))$. Then by
Lemma~\ref{lemma-2.2} we have that $S^*_{\alpha,\beta}\subseteq
\operatorname{int}_{B^0_{\lambda}(S)}
(\operatorname{cl}_{B^0_{\lambda}(S)}(U(0)))$ and since $U(0)$ is
a regular open set we conclude that $S_{\alpha,\beta}\subseteq
U(0)$. This completes the proof of the proposition.
\end{proof}

The following theorem describes the structure of semiregular pseudocompact topological Brandt $\lambda^0$-exten\-si\-ons of semitopological monoids with zero in the class of semitopological semigroups.

\begin{theorem}\label{theorem-2.10}
For any semiregular pseudocompact semitopological monoid $(S,\tau)$ with zero and for any cardinal $\lambda\geqslant 1$ there exists a unique semiregular pseudocompact topological Brandt $\lambda^0$-extension $\left(B^0_{\lambda}(S),\tau_{B}^S\right)$ of $(S,\tau)$ in the class of semitopological semigroups, and the topology $\tau_{B}^S$ is generated by the base $\mathscr{B}_B=\bigcup\left\{\mathscr{B}_B(t)\mid t\in B^0_{\lambda}(S)\right\}$, where:
\begin{itemize}
    \item[$(i)$] $\mathscr{B}_B(t)=\big\{(U(s))_{\alpha,\beta} \setminus\{ 0_S\}\mid U(s)\in \mathscr{B}_S(s)\big\}$, where $t=(\alpha,s,\beta)$ is a non-zero element of    $B^0_{\lambda}(S)$, $\alpha,\beta\in\lambda$;

    \item[$(ii)$] $\mathscr{B}_B(0)=\Big\{U_{A}(0)= \bigcup_{(\alpha,\beta)\in(\lambda\times\lambda) \setminus A}S_{\alpha,\beta}\cup \bigcup_{(\gamma,\delta)\in A} (U(0_S))_{\gamma,\delta} \mid A \hbox{~is a finite subset of~} \lambda\times\lambda$ and  $U(0_S)\in\mathscr{B}_S(0_S)\Big\}$, where $0$ is the zero of $B^0_{\lambda}(S)$,
\end{itemize}
and $\mathscr{B}_S(s)$ is a base of the topology $\tau$ at the point $s\in S$.
\end{theorem}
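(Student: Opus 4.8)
The plan is to mirror the proof of Theorem~\ref{theorem-2.7c}, with Proposition~\ref{proposition-2.8} taking over the role played there by Proposition~\ref{proposition-2.7a}. When $\lambda$ is finite the underlying topology is the unique one furnished by Theorem~\ref{theorem-2.6} (whose zero-neighbourhoods coincide with the present ones obtained by taking $A=\lambda\times\lambda$), and since $B^0_{\lambda}(S)$ is then a finite union of closed copies of $S$ meeting at the common zero, the verification that it is semiregular and pseudocompact runs exactly as in the infinite case below. So I would assume $\lambda$ infinite henceforth and abbreviate $X=\left(B^0_{\lambda}(S),\tau_{B}^S\right)$.

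For \emph{uniqueness} I would show that any semiregular pseudocompact topology making $B^0_{\lambda}(S)$ a semitopological semigroup and restricting to $\tau$ on some $S_{\alpha,\alpha}$ is generated by $\mathscr{B}_B$. At the non-zero points this is forced by Remark~\ref{remark-2.5}, which makes statement~$(v)$ of Proposition~\ref{proposition-2.4} available for infinite $\lambda$ and yields exactly the sets in~$(i)$. At the zero, Proposition~\ref{proposition-2.8} guarantees that each neighbourhood $U(0)$ contains all but finitely many full blocks $S_{\alpha,\beta}$; putting $A=\{(\alpha,\beta):S_{\alpha,\beta}\nsubseteq U(0)\}$ and invoking~$(i)$ on those finitely many blocks shows $U(0)\supseteq U_{A}(0)$ for a suitable $U(0_S)$, so the family in~$(ii)$ is a base at $0$.

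For \emph{existence} I would declare $\tau_{B}^S$ to be generated by $\mathscr{B}_B$ and verify, in turn, that $\mathscr{B}_B$ is a base (the only delicate point being $U_{A}(0)\cap U_{A'}(0)\supseteq U_{A\cup A'}(0)$ taken with $U(0_S)\cap U'(0_S)$), that $X$ is Hausdorff and restricts to $\tau$ on each $S_{\alpha,\alpha}$, and that the multiplication is separately continuous. For the latter the only non-trivial translations are $x\mapsto 0\cdot x$ and $x\mapsto x\cdot 0$, which are handled verbatim by the estimate $V_{K_{1}\times K_{1}}(0)\cdot(\alpha,s,\beta)\subseteq U_{K}(0)$ appearing in the proof of Theorem~\ref{theorem-2.7c}.

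The substance of the theorem, and the step I expect to be hardest, is checking that this explicit topology is genuinely semiregular and pseudocompact. For semiregularity I would take $\mathscr{B}_S$ to consist of regular open sets (possible since $S$ is semiregular), and for $s\neq 0_S$ I would further arrange $0_S\notin\operatorname{cl}_S(U(s))$, which is available in a Hausdorff semiregular space. A non-zero basic set then lies in the open block $S^*_{\alpha,\beta}$ and its $X$-closure misses the zero, so regular-openness in $X$ reduces to regular-openness inside $S_{\alpha,\beta}\cong S$. For a zero-neighbourhood the fiddly computation is $\operatorname{cl}_{X}(U_{A}(0))=\bigcup_{(\alpha,\beta)\notin A}S_{\alpha,\beta}\cup\bigcup_{(\gamma,\delta)\in A}(\operatorname{cl}_S U(0_S))_{\gamma,\delta}$, after which $\operatorname{int}_S\operatorname{cl}_S U(0_S)=U(0_S)$ gives $\operatorname{int}_{X}\operatorname{cl}_{X}(U_{A}(0))=U_{A}(0)$. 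For pseudocompactness I would take a locally finite family of non-empty open sets, use local finiteness at the zero to find $U(0)$ meeting only finitely many members; since $U(0)$ contains all but finitely many full blocks, every remaining member is confined to the finitely many blocks indexed by $A$, each a copy of the pseudocompact space $S$, so the restricted subfamily is finite on each such block and the whole family is therefore finite.
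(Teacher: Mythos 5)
Your proposal is correct and follows essentially the same route as the paper: the paper's entire proof of Theorem~\ref{theorem-2.10} is the remark that it proceeds as in Theorem~\ref{theorem-2.7c} with Proposition~\ref{proposition-2.8} playing the role of Proposition~\ref{proposition-2.7a}, which is precisely your plan (finite case via Theorem~\ref{theorem-2.6}, non-zero points via Remark~\ref{remark-2.5}, the base at zero via Proposition~\ref{proposition-2.8}, and separate continuity via the estimate $V_{K_1\times K_1}(0)\cdot(\alpha,s,\beta)\subseteq U_K(0)$). Your additional explicit verifications of semiregularity (the closure formula for $U_A(0)$ and regular-openness) and of pseudocompactness of the constructed topology are sound and merely fill in details the paper leaves implicit.
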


The proof of Theorem~\ref{theorem-2.10} is similar to the proof of Theorem~\ref{theorem-2.7c} and is based on Proposition~\ref{proposition-2.8}.

Theorem~\ref{theorem-2.10} implies the following corollary.

\begin{corollary}\label{corollary-2.11}
For any semiregular countably pracompact semitopological monoid $(S,\tau)$ with zero and for any cardinal $\lambda\geqslant 1$ there exists a unique semiregular countably pracompact topological Brandt $\lambda^0$-extension $\left(B^0_{\lambda}(S),\tau_{B}^S\right)$ of $(S,\tau)$ in the class of semitopological semigroups, and the topology $\tau_{B}^S$ is generated by the base $\mathscr{B}_B=\bigcup\left\{\mathscr{B}_B(t)\mid t\in B^0_{\lambda}(S)\right\}$, where:
\begin{itemize}
    \item[$(i)$] $\mathscr{B}_B(t)=\big\{(U(s))_{\alpha,\beta} \setminus\{ 0_S\}\mid U(s)\in \mathscr{B}_S(s)\big\}$, where $t=(\alpha,s,\beta)$ is a non-zero element of    $B^0_{\lambda}(S)$, $\alpha,\beta\in\lambda$;

    \item[$(ii)$] $\mathscr{B}_B(0)=\Big\{U_{A}(0)= \bigcup_{(\alpha,\beta)\in(\lambda\times\lambda) \setminus A}S_{\alpha,\beta}\cup \bigcup_{(\gamma,\delta)\in A} (U(0_S))_{\gamma,\delta} \mid A \hbox{~is a finite subset of~} \lambda\times\lambda$ and  $U(0_S)\in\mathscr{B}_S(0_S)\Big\}$, where $0$ is the zero of $B^0_{\lambda}(S)$,
\end{itemize}
and $\mathscr{B}_S(s)$ is a base of the topology $\tau$ at the point $s\in S$.
\end{corollary}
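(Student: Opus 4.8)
The plan is to derive the corollary from Theorem~\ref{theorem-2.10} together with the implication ``countably pracompact $\Rightarrow$ pseudocompact'' recorded in the preliminaries. Since $(S,\tau)$ is semiregular and countably pracompact, it is in particular semiregular and pseudocompact, so Theorem~\ref{theorem-2.10} supplies a semiregular pseudocompact topological Brandt $\lambda^0$-extension $\left(B^0_{\lambda}(S),\tau_B^S\right)$ of $(S,\tau)$ in the class of semitopological semigroups whose topology is generated by exactly the base $\mathscr{B}_B$ described in items $(i)$ and $(ii)$. Moreover, uniqueness is immediate: any semiregular countably pracompact extension of $(S,\tau)$ is, a fortiori, a semiregular pseudocompact one, so the uniqueness clause of Theorem~\ref{theorem-2.10} forces its topology to coincide with $\tau_B^S$. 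Thus the only remaining point is to prove that the space $\left(B^0_{\lambda}(S),\tau_B^S\right)$ produced by Theorem~\ref{theorem-2.10} is in fact countably pracompact, and not merely pseudocompact.

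To this end I would exhibit a dense subset at which $B^0_{\lambda}(S)$ is countably compact. Let $D\subseteq S$ be a dense subset witnessing the countable pracompactness of $(S,\tau)$, and put $A=\{0\}\cup\bigcup_{\alpha,\beta\in\lambda}D^*_{\alpha,\beta}$. Density of $A$ follows from the description of $\mathscr{B}_B$: a basic neighbourhood $(U(s))_{\alpha,\beta}\setminus\{0_S\}$ of a non-zero point meets $D^*_{\alpha,\beta}$ because $S_{\alpha,\beta}$ is homeomorphic to $S$ by Lemma~\ref{lemma-2.1} and $D$ is dense in $S$ (using Hausdorffness to shrink $U(s)$ away from $0_S$, which is possible since $s\neq 0_S$), while every basic neighbourhood $U_A(0)$ of the zero contains entire copies $S_{\gamma,\delta}$ and hence meets $A$.

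The heart of the argument is to show that every infinite $B\subseteq A$ has an accumulation point in $B^0_{\lambda}(S)$, and here the base element $U_A(0)$ of item $(ii)$ does the key work, splitting the proof into two cases. If $B$ meets infinitely many of the sets $S^*_{\alpha,\beta}$, then, since each basic neighbourhood $U_A(0)$ of $0$ omits only the finitely many copies indexed by the finite set $A$ and contains all the others entirely, every such neighbourhood contains infinitely many points of $B$; hence $0$ is an accumulation point of $B$. If instead $B$ meets only finitely many of the $S^*_{\alpha,\beta}$, then by the pigeonhole principle infinitely many points of $B$ lie in a single copy $S_{\alpha,\beta}$; transporting this infinite set through the homeomorphism $S_{\alpha,\beta}\cong S$ of Lemma~\ref{lemma-2.1} into $D$, the countable compactness of $S$ at $D$ produces an accumulation point $x\in S$, whose image $p\in S_{\alpha,\beta}$ is an accumulation point of $B$ in the subspace $S_{\alpha,\beta}$ and therefore also in all of $B^0_{\lambda}(S)$.

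The main obstacle I anticipate is the second case: one must make sure that the infinite subset of $B$ confined to a single $S_{\alpha,\beta}$ genuinely corresponds, through the homeomorphism, to an infinite subset of the witnessing set $D$, and that an accumulation point obtained inside the subspace $S_{\alpha,\beta}$ (which is closed by Lemma~\ref{lemma-2.2}) remains an accumulation point in the whole extension. The latter is the always-valid direction of the subspace/whole-space comparison, and it is precisely what lets the local countable compactness of $S$ at $D$ propagate to $B^0_{\lambda}(S)$. Once both cases are in place, $B^0_{\lambda}(S)$ is countably compact at the dense set $A$, hence countably pracompact, which completes the proof.
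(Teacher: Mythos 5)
Your proposal is correct and follows the route the paper intends: the paper states Corollary~\ref{corollary-2.11} as an immediate consequence of Theorem~\ref{theorem-2.10} (via the implication ``countably pracompact $\Rightarrow$ pseudocompact'' for both existence and uniqueness), leaving the verification that $\left(B^0_{\lambda}(S),\tau_{B}^S\right)$ is itself countably pracompact implicit. Your explicit witness $A=\{0\}\cup\bigcup_{\alpha,\beta\in\lambda}D^*_{\alpha,\beta}$, with the case split on whether an infinite $B\subseteq A$ meets infinitely many $S^*_{\alpha,\beta}$ (accumulating at $0$) or is pigeonholed into a single copy of $S$ (accumulating via the homeomorphism of Lemma~\ref{lemma-2.1}), correctly supplies exactly the missing detail.
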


By Theorems~\ref{theorem-2.7c}, \ref{theorem-2.10} and Corollaries~\ref{corollary-2.7d}, \ref{corollary-2.11}
we have that for every semiregular pseudocompact (resp.,
semiregular countably pracompact, Hausdorff countably compact,
Hausdorff sequentially compact, Hausdorff compact) semitopological
semigroup $(S,\tau)$ there exists a unique semiregular
pseudocompact (resp., semiregular countably pracompact, Hausdorff
countably compact, Hausdorff sequentially compact, Hausdorff
compact) topological Brandt $\lambda^0$-extension
$\left(B^0_{\lambda}(S),\tau_{B}^S\right)$ of $(S,\tau)$ in the
class of semitopological semigroups. This implies the following
corollary, which generalized Theorem~2 from
\cite{GutikPavlyk2005}.

\begin{corollary}\label{corollary-2.12}
Let $\lambda$ be an arbitrary cardinal $\geqslant 1$ and $(S,\tau)$ be a Hausdorff semitopological monoid with zero. Then the following assertions hold:
\begin{itemize}
  \item[$(i)$] if $(S,\tau)$ is compact then the semiregular pseudocompact (resp., semiregular countably pracompact, Hausdorff countably compact) topological Brandt $\lambda^0$-extension $\left(B^0_{\lambda}(S),\tau_{B}^S\right)$ of $(S,\tau)$ in the class of semitopological semigroups is compact;
  \item[$(ii)$] if $(S,\tau)$ is semiregular sequentially compact then the semiregular pseudocompact (resp., semiregular countably pracompact, Hausdorff countably compact) topological Brandt $\lambda^0$-extension $\left(B^0_{\lambda}(S),\tau_{B}^S\right)$ of $(S,\tau)$ in the class of semitopological semigroups is sequentially compact;
  \item[$(iii)$] if $(S,\tau)$ is semiregular countably compact then the semiregular pseudocompact (resp., semiregular countably pracompact) topological Brandt $\lambda^0$-extension $\left(B^0_{\lambda}(S),\tau_{B}^S\right)$ of $(S,\tau)$ in the class of semitopological semigroups is countably compact;
  \item[$(iv)$] if $(S,\tau)$ is semiregular countably pracompact then the semiregular pseudocompact topological Brandt $\lambda^0$-extension $\left(B^0_{\lambda}(S),\tau_{B}^S\right)$ of $(S,\tau)$ in the class of semitopological semigroups is countably pracompact.
\end{itemize}
\end{corollary}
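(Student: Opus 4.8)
The plan rests on one structural observation: the bases $\mathscr{B}_B$ produced by Theorems~\ref{theorem-2.7c} and~\ref{theorem-2.10} and by Corollaries~\ref{corollary-2.7d} and~\ref{corollary-2.11} coincide verbatim, so in each ``(resp., $\ldots$)'' clause the space $\left(B^0_{\lambda}(S),\tau_{B}^S\right)$ is literally the same topological space. Hence I only need to verify the stronger compactness property once, directly from the base, after noting that the stronger hypothesis on $S$ implies the weaker one needed for existence (a Hausdorff compact space is regular, hence semiregular, and compact $\Rightarrow$ countably compact $\Rightarrow$ countably pracompact $\Rightarrow$ pseudocompact). Two features of the base will be used repeatedly. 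First, by Lemma~\ref{lemma-2.1} together with Definition~\ref{definition-1.1}(b), every block $S_{\alpha,\beta}$ carries, as a subspace, a homeomorphic copy of $(S,\tau)$, so it inherits whatever compactness property $S$ has. Second, every basic neighbourhood $U_A(0)$ of the zero contains entirely all blocks $S_{\gamma,\delta}$ except the finitely many with $(\gamma,\delta)\in A$, and $B^0_{\lambda}(S)\setminus U_A(0)$ is a closed set contained in the finite union $\bigcup_{(\gamma,\delta)\in A}S_{\gamma,\delta}$.

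For $(i)$ I would take an arbitrary open cover of $B^0_{\lambda}(S)$, pick a member $O$ containing $0$, and fix a basic neighbourhood $U_A(0)\subseteq O$. Then $B^0_{\lambda}(S)\setminus O\subseteq B^0_{\lambda}(S)\setminus U_A(0)$ is a closed subset of the finite union $\bigcup_{(\gamma,\delta)\in A}S_{\gamma,\delta}$; since each block is homeomorphic to the compact space $S$, this union is compact, so its closed subset is compact and is covered by finitely many members of the cover. Adjoining $O$ yields a finite subcover.

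For $(ii)$, $(iii)$ and $(iv)$ I would use the same dichotomy. Given a sequence (for $(ii)$) or an infinite subset $B$ (for $(iii)$, $(iv)$): if some single starred block $S_{\alpha,\beta}^*$ meets $B$ in an infinite set, I transport that set through the homeomorphism $S_{\alpha,\beta}\cong S$, apply the relevant property of $S$ (sequential compactness; resp.\ countable compactness; resp.\ countable compactness at the dense witness), and observe that an accumulation point or convergent subsequence obtained inside the subspace $S_{\alpha,\beta}$ remains one in $B^0_{\lambda}(S)$. Otherwise every block meets $B$ finitely and $B$ meets infinitely many blocks; then for each basic $U_A(0)$ only the finitely many points of $B$ lying in the blocks indexed by $A$ can fall outside $U_A(0)$, so $0$ is an accumulation point of $B$, and in the sequential case the sequence converges to $0$. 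For $(iv)$ I must first produce the dense witness: taking a dense $D\subseteq S$ at which $S$ is countably compact, I set $\mathcal{D}=\{0\}\cup\bigcup_{\alpha,\beta}D_{\alpha,\beta}$, where $D_{\alpha,\beta}$ is the image of $D$ in $S_{\alpha,\beta}$; since the blocks $S_{\alpha,\beta}^*$ are open and $D_{\alpha,\beta}$ is dense in $S_{\alpha,\beta}$, the set $\mathcal{D}$ is dense, and the dichotomy applied to infinite subsets of $\mathcal{D}$ shows $B^0_{\lambda}(S)$ is countably compact at $\mathcal{D}$.

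The routine bookkeeping—checking that each $S_{\alpha,\beta}$ genuinely inherits the named property and that accumulation points survive the passage from a subspace to the whole space—is harmless. The step demanding the most care is $(iv)$: one must choose the dense witness $\mathcal{D}$ so that the ``spread over infinitely many blocks'' alternative still forces accumulation at $0$, i.e.\ so that the infinite subsets of $\mathcal{D}$ falling into the second case are resolved by the neighbourhood structure of the zero rather than by any property of $D$. Here Propositions~\ref{proposition-2.7a} and~\ref{proposition-2.8}, which already isolate exactly this neighbourhood behaviour of the zero, provide the template for the ``second case'' of the dichotomy.
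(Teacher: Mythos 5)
Your proposal is correct, and its first move coincides with the paper's: since the bases in Theorems~\ref{theorem-2.7c}, \ref{theorem-2.10} and Corollaries~\ref{corollary-2.7d}, \ref{corollary-2.11} are word-for-word identical, in every clause of the corollary all the named extensions are one and the same space $\left(B^0_{\lambda}(S),\tau_{B}^{S}\right)$. Where you diverge is in what comes next. The paper stops at this point: it invokes the existence halves of those four statements (for instance, Corollary~\ref{corollary-2.7d} already asserts that $\left(B^0_{\lambda}(S),\tau_{B}^{S}\right)$ is compact whenever $S$ is Hausdorff compact), so the corollary is a pure uniqueness consequence. You instead re-verify each stronger property directly from the base: the open-cover argument for $(i)$ (the complement of a basic $U_A(0)$ is a closed subset of the finite, hence compact, union $\bigcup_{(\gamma,\delta)\in A}S_{\gamma,\delta}$), and the block dichotomy for $(ii)$--$(iv)$ (either some $S^*_{\alpha,\beta}$ captures infinitely many points or terms, and one transports the corresponding property of $S\cong S_{\alpha,\beta}$ back to the ambient space, or the set spreads over infinitely many blocks, and then every $U_A(0)$ omits only finitely many of its points, forcing accumulation at, or convergence to, $0$). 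This costs more work but buys something real: your argument needs only the uniqueness of the topology $\tau_{B}^{S}$, not the compactness-type assertions of Theorem~\ref{theorem-2.7c} and Corollaries~\ref{corollary-2.7d}, \ref{corollary-2.11}; in fact your dichotomy supplies exactly the verification that the proofs of those earlier statements leave largely implicit (they concentrate on the uniqueness of the base at zero and on separate continuity rather than on checking that the constructed space has the stated covering property). Your handling of $(iv)$, with the dense witness $\mathcal{D}=\{0\}\cup\bigcup_{\alpha,\beta\in\lambda}D_{\alpha,\beta}$ and the observation that the spread-out case is resolved by the neighbourhood filter of $0$ alone, is also sound.
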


The following example shows that the statements of Theorem~\ref{theorem-2.10}, Corollaries~\ref{corollary-2.11} and \ref{corollary-2.12} are not true in the case when the pseudocompact (resp., countably pracompact) space $\left(B^0_{\lambda}(S),\tau_{B}^S\right)$ is not semiregular.

\begin{example}\label{example-2.12a}
Let $\lambda$ be any infinite cardinal. Let $S$ be the set
$\{1\}\cup\left\{1-\frac{1}{n}\colon n\in\omega\right\}$ with the
semilattice operation $\min$ and the usual topology $\tau_u$.
We define a topology $\tau_B^\circ$ on $B^0_{\lambda}(S)$ in the
following way:
\begin{itemize}
  \item[$(i)$] the family $\mathscr{B}_B^\circ(\alpha,s,\beta)=\{(\alpha,U,\beta)\mid U\in\tau_u\}$ is the base of the topology $\tau_B^\circ$ at the non-zero element $(\alpha,s,\beta)\in B^0_{\lambda}(S)$; \; and
  \item[$(ii)$] the family $\mathscr{B}_B^\circ(0)=\{U_{(\alpha_1,\beta_1), \ldots,(\alpha_n,\beta_n)}\mid n\in\omega\}$, where
\begin{equation*}
    U_{(\alpha_1,\beta_1), \ldots,(\alpha_n,\beta_n)}= \bigcup\left\{ S_{\alpha,\beta}\setminus\{(\alpha,1,\beta)\}\mid (\alpha,\beta)\in(\lambda\times\lambda)\setminus\{(\alpha_1,\beta_1), \ldots,(\alpha_n,\beta_n)\}\right\},
\end{equation*}
      is the base of the topology $\tau_B^\circ$ at the zero $0\in B^0_{\lambda}(S)$.
\end{itemize}

Simple verifications show that $\left(B^0_{\lambda}(S),\tau_B^\circ\right)$ is a Hausdorff non-semiregular topological space. Since every nonzero element $(\alpha,s,\beta)$ of the topological space $\left(B^0_{\lambda}(S),\tau_B^\circ\right)$ has a base which consists of open-and-closed subsets we conclude that the topological space $\left(B^0_{\lambda}(S),\tau_B^\circ\right)$ is functionally Hausdorff (i.e., for every two distinct points $x,y\in X$ there exists a continuous function $f\colon X\rightarrow[0,1]$ such that $f(x)=0$ and $f(y)=1$).
\end{example}

\begin{proposition}\label{proposition-2.12b}
If $\lambda$ is an infinite cardinal then
$\left(B^0_{\lambda}(S),\tau_B^\circ\right)$ is  a countably
pracompact semitopological semigroup, where $S$ is the topological semigroup defined in Example~\ref{example-2.12a}.
\end{proposition}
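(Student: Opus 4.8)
The plan is to verify the two separate assertions hidden in the statement: that the semigroup operation on $\left(B^0_{\lambda}(S),\tau_B^\circ\right)$ is separately continuous, and that the space is countably pracompact. Before doing either, I would record the local structure of the topology. Each punctured block $S_{\alpha,\beta}^*$ carries the subspace topology of $S\setminus\{0_S\}=\left\{1-\frac{1}{n}\colon n\geqslant 2\right\}\cup\{1\}$, that is, a nontrivial convergent sequence whose only non-isolated point is the top element $(\alpha,1,\beta)$; every point $(\alpha,s,\beta)$ with $s\neq 1$ is isolated in the whole space. The crucial feature of $\tau_B^\circ$ is that each basic neighbourhood $U_{(\alpha_1,\beta_1),\dots,(\alpha_n,\beta_n)}$ of $0$ contains, for all but finitely many pairs $(\alpha,\beta)$, the whole block $S_{\alpha,\beta}$ with its top deleted, and in particular no neighbourhood of $0$ ever contains a top element $(\alpha,1,\beta)$.

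For countable pracompactness I would take $A=D$, the set of all isolated points $\{(\alpha,s,\beta)\colon s\in S\setminus\{0_S,1\}\}$. Density of $D$ is immediate: every isolated point lies in $D$, each top $(\alpha,1,\beta)$ is a limit of points of $D$ inside its block, and each basic neighbourhood of $0$ meets $D$ by the description above. Then I would fix an arbitrary infinite $B\subseteq D$ and argue by a dichotomy. If some block $S_{\alpha,\beta}^*$ contains infinitely many points of $B$, these accumulate at the top $(\alpha,1,\beta)$, which is therefore an accumulation point of $B$. Otherwise every block meets $B$ in a finite set, so $B$ meets infinitely many blocks; since any basic neighbourhood of $0$ deletes only finitely many blocks, it still meets $B\setminus\{0\}$, and hence $0$ is an accumulation point of $B$. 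In both cases $B$ has an accumulation point in the space, so $\left(B^0_{\lambda}(S),\tau_B^\circ\right)$ is countably compact at the dense set $D$, i.e.\ countably pracompact.

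For separate continuity I would check that every left and right translation is continuous; translations by $0$ are constant, so fix a nonzero $t=(\gamma,b,\delta)$ and treat, say, right translation $\rho_t\colon x\mapsto x\cdot t$. At a nonzero point the verification is local and reduces, via the block homeomorphisms, to the separate continuity of $\min$ on $S$: for fixed $b$ the map $a\mapsto\min(a,b)$ is continuous on $S$ (the only issue is at $a=1$, where $\min(1-\frac{1}{n},b)\to b=\min(1,b)$), and since $b\neq 0_S$ the image never falls unexpectedly onto the global zero. The only delicate point is continuity at $x=0$. Given a basic neighbourhood $V=U_{(\alpha_1,\beta_1),\dots,(\alpha_n,\beta_n)}$ of $0=\rho_t(0)$, I would produce a neighbourhood $W$ of $0$ with $\rho_t(W)\subseteq V$ by deleting from $W$ exactly the finitely many blocks $(\alpha_i,\gamma)$ for which $\beta_i=\delta$. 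A point $x'=(\alpha',a',\beta')\in W\setminus\{0\}$ is never a top, so $\rho_t(x')=(\alpha',\min(a',b),\delta)$ is never a top either; if $\beta'\neq\gamma$ or $\min(a',b)=0_S$ it maps to $0\in V$, and otherwise it lands in a block $(\alpha',\delta)$ which, by our choice of the deleted blocks, is none of the excluded pairs $(\alpha_i,\beta_i)$, so $\rho_t(x')\in V$. Left translations are handled by the mirror-image argument with the roles of the two indices interchanged.

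I expect the main obstacle to be precisely this continuity at the zero. Two separate phenomena conspire there: when $b=1$ a translation could push a top element to another top, and a translation can carry points of an arbitrary block into one of the finitely many blocks that $V$ excludes. The first is neutralised automatically because $\tau_B^\circ$ keeps all tops out of every neighbourhood of $0$, and the second is neutralised by hand through the enlargement of the deleted index set in $W$; getting the bookkeeping of indices right in this last step is where care is needed. The remaining global requirement, associativity, is inherited from the algebraic Brandt $\lambda^0$-extension and needs no topological input.
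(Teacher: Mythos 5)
Your proof is correct and follows essentially the same route as the paper's: the same dense set of isolated points with the same block dichotomy (infinitely many points in one block accumulate at its top, otherwise $0$ is an accumulation point) for countable pracompactness, and the same case-by-case verification of separate continuity, with the only delicate case being continuity of translations at $0$. Your bookkeeping there (for right translation by $(\gamma,b,\delta)$, deleting from the neighbourhood of $0$ precisely the blocks $(\alpha_i,\gamma)$ with $\beta_i=\delta$, and noting that neighbourhoods of $0$ contain no top elements) is in fact a corrected form of the paper's conditions $(v)$ and $(vi)$, whose printed exclusion lists $(\beta,\beta),(\alpha_1,\beta_1),\ldots,(\alpha_n,\beta_n),(\beta_1,\beta_1),\ldots,(\beta_n,\beta_n)$ do not literally yield the claimed inclusions, so your version is the one that should stand.
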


\begin{proof}
First we show that the topological space $\left(B^0_{\lambda}(S),\tau_B^\circ\right)$ is countably pracompact. Simple observations show that the set $A=\left\{(\alpha,s,\beta)\mid s\in S\setminus\{0,1\}\right\}$ is dense in $\left(B^0_{\lambda}(S),\tau_B^\circ\right)$. Let $B$ be an infinite subset of $A$. If for some $S_{\alpha,\beta}$ the set $B\cap S_{\alpha,\beta}$ is infinite then the definition of the topology $\tau_B^\circ$ implies that $(\alpha,1,\beta)$ is an accumulation point of $B$ in $\left(B^0_{\lambda}(S),\tau_B^\circ\right)$. Otherwise, the definition of the topology $\tau_B^\circ$ implies that zero $0$ is an accumulation point of $B$ in $\left(B^0_{\lambda}(S),\tau_B^\circ\right)$.

The task is now to show that $\left(B^0_{\lambda}(S),\tau_B^\circ\right)$ is a semitopological semigroup. For every \break $U_{(\alpha_1,\beta_1), \ldots,(\alpha_n,\beta_n)}\in\mathscr{B}_B^\circ(0)$ and any $(\alpha,s,\beta), (\gamma,t,\delta)\in B^0_{\lambda}(S)$ the following conditions hold:
\begin{itemize}
  \item[$(i)$] if $\beta=\gamma$ then $(\alpha,V(s),\beta)\cdot (\gamma,V(t),\delta) \subseteq (\alpha,U(s\cdot t),\delta)$ for open neighbourhoods $V(s)$, $V(t)$ and $U(s\cdot t)$ of $s$, $t$ and $s\cdot t$ in $S$, respectively, such that $V(s)\cdot V(t)\subseteq U(s\cdot t)$, because $S$ is a topological semigroup;
      
  \item[$(ii)$] if $\beta\neq\gamma$ then $(\alpha,V(s),\beta)\cdot (\gamma,V(t),\delta)=\{0\} \subseteq U_{(\alpha_1,\beta_1), \ldots,(\alpha_n,\beta_n)}$ for all open neighbourhoods $V(s)$ and $V(t)$ of $s$ and $t$ in $S$, respectively;
      
  \item[$(iii)$] $(\alpha,V(s),\beta)\cdot \{0\}=\{0\}\cdot(\alpha,V(s),\beta) =\{0\} \subseteq U_{(\alpha_1,\beta_1), \ldots,(\alpha_n,\beta_n)}$ for every open neighbourhood $V(s)$ of $s$ in $S$;
      
  \item[$(iv)$] $U_{(\alpha_1,\beta_1), \ldots,(\alpha_n,\beta_n)}\cdot \{0\}=\{0\}\cdot U_{(\alpha_1,\beta_1), \ldots,(\alpha_n,\beta_n)}=\{0\} \subseteq U_{(\alpha_1,\beta_1), \ldots,(\alpha_n,\beta_n)}$;
      
  \item[$(v)$] $(\alpha,s,\beta)\cdot U_{(\beta,\beta),(\alpha_1,\beta_1), \ldots,(\alpha_n,\beta_n)(\beta_1,\beta_1), \ldots,(\beta_n,\beta_n)} \subseteq U_{(\alpha_1,\beta_1), \ldots,(\alpha_n,\beta_n)}$;
      
  \item[$(vi)$] $U_{(\alpha,\alpha),(\alpha_1,\beta_1), \ldots,(\alpha_n,\beta_n)(\alpha_1,\alpha_1), \ldots,(\alpha_n,\alpha_n)} \cdot(\alpha,s,\beta) \subseteq U_{(\alpha_1,\beta_1), \ldots,(\alpha_n,\beta_n)}$.
\end{itemize}
Hence the semigroup operation in $\left(B^0_{\lambda}(S),\tau_B^\circ\right)$ is separately continuous. This completes the proof of the proposition.
\end{proof}

\begin{remark}\label{remark-2.12c}
In the case when $\lambda$ is an infinite countable cardinal  
$\left(B^0_{\lambda}(S),\tau_B^\circ\right)$ is not a metrizable
space because $\left(B^0_{\lambda}(S),\tau_B^\circ\right)$ is a second countable space which is not regular (see Theorem~4.2.9 of~\cite{Engelking1989}).
\end{remark}

Here and subsequently $\tau_{B}^S$ denotes the the topology on the Brandt $\lambda^0$-extension $B^0_{\lambda}(S)$ of a semitopological semigroup $S$ which is defined in Theorems~\ref{theorem-2.7c} and \ref{theorem-2.10}.

The following proposition is proved by detailed verifications.

\begin{proposition}\label{proposition-2.18}
Let $\lambda$ by any cardinal $\geqslant 1$. Then $\left(B^0_{\lambda}(S),\tau_{B}^{S}\right)$ is a Hausdorff semitopological semigroup if and only if $(S,\tau)$ is a Hausdorff semitopological semigroup.
\end{proposition}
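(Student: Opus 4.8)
The plan is to prove the two implications separately, disposing of the ``only if'' part first, since it is immediate, and then concentrating on the sufficiency direction, where essentially all the work lies. Throughout I would use only that the families in $(i)$ and $(ii)$ of Theorems~\ref{theorem-2.7c} and \ref{theorem-2.10} constitute genuine neighbourhood bases (a routine closure-under-finite-intersection check, already implicit in those theorems and not requiring any compactness of $S$), so that $\tau_B^S$ is an honest topology.

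For necessity, suppose $(B^0_\lambda(S),\tau_B^S)$ is a Hausdorff semitopological semigroup. Inspecting the defining base shows directly that $\tau_B^S|_{S_{\alpha,\alpha}}=\tau$ for every $\alpha\in\lambda$ (so in particular condition~(b) of Definition~\ref{definition-1.1} holds): the bijection $s\mapsto(\alpha,s,\alpha)$, $0_S\mapsto 0$, identifies $(S,\tau)$ with the subspace $S_{\alpha,\alpha}$ of $B^0_\lambda(S)$ both topologically and algebraically. Since $S_{\alpha,\alpha}$ is a subsemigroup and a subspace of a Hausdorff space, it is itself Hausdorff, and the restriction of the separately continuous multiplication of $B^0_\lambda(S)$ to $S_{\alpha,\alpha}\times S_{\alpha,\alpha}$ is separately continuous. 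Hence $(S,\tau)$ is a Hausdorff semitopological semigroup.

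For sufficiency, assume $(S,\tau)$ is a Hausdorff semitopological semigroup; I would verify separately that $(B^0_\lambda(S),\tau_B^S)$ is Hausdorff and that its multiplication is separately continuous. For the Hausdorff property I would run a short case analysis on distinct points $x\ne y$. If both are non-zero, say $x=(\alpha,s,\beta)$ and $y=(\gamma,t,\delta)$, then either $(\alpha,\beta)\ne(\gamma,\delta)$, in which case the basic neighbourhoods from $(i)$ lie in the disjoint open sets $S_{\alpha,\beta}^*$ and $S_{\gamma,\delta}^*$ (cf.\ Lemma~\ref{lemma-2.2}), or $(\alpha,\beta)=(\gamma,\delta)$ with $s\ne t$, in which case disjoint $\tau$-neighbourhoods $U(s),V(t)$ in the Hausdorff space $S$ give the disjoint neighbourhoods $(U(s))_{\alpha,\beta}\setminus\{0_S\}$ and $(V(t))_{\alpha,\beta}\setminus\{0_S\}$. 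If $x=0$ and $y=(\gamma,t,\delta)$, I would separate $0_S$ from $t$ in $S$ by disjoint $U(0_S),V(t)$ and use the basic neighbourhood $U_{\{(\gamma,\delta)\}}(0)$ from $(ii)$ together with $(V(t))_{\gamma,\delta}\setminus\{0_S\}$; these are disjoint because $U(0_S)\cap V(t)=\varnothing$ while the latter set lies entirely in $S_{\gamma,\delta}^*$.

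For separate continuity I would fix $a\in B^0_\lambda(S)$ and check continuity of $x\mapsto a\cdot x$ and $x\mapsto x\cdot a$. When $a$ and the base point are both non-zero, the verification is immediate from $(i)$ together with the separate continuity of the multiplication of $S$, exactly as in the non-zero cases of Theorem~\ref{theorem-2.7c}. The main obstacle---and the only genuinely delicate point---is continuity of a translation by a fixed non-zero $(\alpha,s,\beta)$ evaluated at the zero, since the neighbourhood base at $0$ involves cofinitely many full blocks $S_{\gamma,\delta}$ and finitely many shrunken blocks $(U(0_S))_{\gamma,\delta}$, so one must control how a single translate is distributed across the index set. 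I would reproduce the argument from the proof of Theorem~\ref{theorem-2.7c}: given a basic $U_K(0)$ and a $\tau$-neighbourhood $U(0_S)$, separate continuity of $S$ at its zero yields $V(0_S)$ with $s\cdot V(0_S)\subseteq U(0_S)$ and $V(0_S)\cdot s\subseteq U(0_S)$, and enlarging $K$ to a finite square $K_1\times K_1$ containing $K\cup\{(\alpha,\beta),(\beta,\alpha)\}$ gives $V_{K_1\times K_1}(0)\cdot(\alpha,s,\beta)\subseteq U_K(0)$ and $(\alpha,s,\beta)\cdot V_{K_1\times K_1}(0)\subseteq U_K(0)$; the translations involving the factor $0$ are handled analogously and are trivial, since any product with a fixed factor $0$ equals $0$. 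Crucially, none of these computations uses compactness of $S$, so the argument is valid for an arbitrary Hausdorff semitopological monoid $(S,\tau)$ and every cardinal $\lambda\geqslant 1$, which completes the plan.
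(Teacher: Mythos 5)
Your proposal is correct and amounts to the paper's own argument: the paper prints no proof at all (it merely remarks that the proposition ``is proved by detailed verifications''), and the verifications you supply are precisely the ones that remark alludes to. In particular, you correctly identify that the only nontrivial point is separate continuity of translations by a non-zero element at the zero, and that the computation $V_{K_1\times K_1}(0)\cdot(\alpha,s,\beta)\subseteq U_K(0)$ from the proof of Theorem~\ref{theorem-2.7c} settles it without any use of compactness of $S$, while the remaining cases (Hausdorff separation via the block structure and translations at non-zero points, where one shrinks neighbourhoods inside $W(ts)\setminus\{0_S\}$ using that $S$ is $T_1$) are routine.
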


\begin{proposition}\label{proposition-2.19}
Let $\lambda\geqslant 1$ by any cardinal. Then the following conditions hold:
\begin{itemize}
  \item[$(i)$] the topological space $\left(B^0_{\lambda}(S),\tau_B^{S}\right)$  is regular if and only if the space $S$ is regular;

  \item[$(ii)$] the topological space $\left(B^0_{\lambda}(S),\tau_B^{S}\right)$ is Tychonoff if and only if the space $S$ is Tychonoff;

  \item[$(iii)$] the topological space $\left(B^0_{\lambda}(S),\tau_B^{S}\right)$ is normal if and only if the space $S$ is normal.
\end{itemize}
\end{proposition}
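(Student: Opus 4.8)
The plan is to prove all three equivalences by exploiting two features of the topology $\tau_B^S$ described in Theorems~\ref{theorem-2.7c} and~\ref{theorem-2.10}: first, that each $S_{\alpha,\alpha}$ is a \emph{closed} copy of $(S,\tau)$ inside $(B^0_\lambda(S),\tau_B^S)$, and second, that every basic neighbourhood $U_A(0)$ of the zero contains all but finitely many of the full blocks $S_{\alpha,\beta}$. Throughout, Hausdorffness of the two spaces is interchangeable by Proposition~\ref{proposition-2.18}, so in each item it remains only to transfer the relevant separation axiom. The three ``only if'' implications I would argue uniformly: by Definition~\ref{definition-1.1} the subspace $S_{\alpha,\alpha}$ carries the topology $\tau$ and hence is homeomorphic to $(S,\tau)$, while Lemma~\ref{lemma-2.2} shows that $S_{\alpha,\alpha}$ is closed in $(B^0_\lambda(S),\tau_B^S)$. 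Since regularity and complete regularity are hereditary and normality is inherited by closed subspaces, each property of $B^0_\lambda(S)$ descends to $S$.

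For the ``if'' implications the key device is the family of continuous retractions $\varphi^{(\alpha,\alpha)}_{(\beta,\beta)}\colon B^0_\lambda(S)\rightarrow S_{\alpha,\beta}$, $x\mapsto (\alpha,1_S,\alpha)\cdot x\cdot(\beta,1_S,\beta)$, which fix $S_{\alpha,\beta}^*$ pointwise and collapse everything else to $0$; their continuity is exactly the separate continuity already used in Lemma~\ref{lemma-2.2}. Together with statement $(i)$ of the base --- that copies of $\tau$-open subsets of $S\setminus\{0_S\}$ are open in $B^0_\lambda(S)$ --- these retractions let me push neighbourhoods and functions from $S$ into each block and pull them back. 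Separating a non-zero point $(\alpha,s,\beta)$ from a closed set $F$ avoiding it is then immediate: I choose a basic neighbourhood inside the open block $S_{\alpha,\beta}^*$ missing $F$ and transport, via $\varphi^{(\alpha,\alpha)}_{(\beta,\beta)}$ and the homeomorphism $S_{\alpha,\beta}\cong S$, either a closed $\tau$-neighbourhood of $s$ (for regularity) or a Urysohn function on $S$ vanishing at $s$ and equal to $1$ off a small neighbourhood (for complete regularity). Since the retraction sends the complement of the block to $0\mapsto 0_S$, the transported function is automatically $1$ there.

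The genuinely non-trivial point is separation at the zero, and this is where the finiteness in $U_A(0)$ is decisive. Given a basic $U_A(0)$ with finite $A=\{(\gamma_1,\delta_1),\dots,(\gamma_k,\delta_k)\}$, the complement of $U_A(0)$ meets only the blocks $S_{\gamma_i,\delta_i}$, and there only in the part lying off $(U(0_S))_{\gamma_i,\delta_i}$. For complete regularity I would take a Urysohn function $g\colon S\to[0,1]$ with $g(0_S)=0$ and $g\equiv 1$ on $S\setminus U(0_S)$, form $f_i=g\circ h_i\circ \varphi^{(\gamma_i,\gamma_i)}_{(\delta_i,\delta_i)}$ with $h_i\colon S_{\gamma_i,\delta_i}\cong S$, and set $f=\max_{1\le i\le k}f_i$; then $f$ is continuous, $f(0)=0$, and $f\equiv 1$ off $U_A(0)$. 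For regularity one shrinks $U(0_S)$ to a closed $\tau$-neighbourhood and checks, using Lemma~\ref{lemma-2.2} and the closedness of each $S_{\alpha,\beta}$, that the closure of the corresponding shrunken neighbourhood of $0$ stays inside $U_A(0)$.

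For normality, disjoint closed sets $F_1,F_2$ are first pushed off the zero: the one not containing $0$ is contained in finitely many blocks and bounded away from $0_S$ there, so within each such block normality of $S$ separates $F_1\cap S_{\gamma_i,\delta_i}$ from $F_2\cap S_{\gamma_i,\delta_i}$ by disjoint $\tau$-open sets $P_1^i\subseteq S\setminus\{0_S\}$ and $P_2^i\ni 0_S$; the unions $\bigcup_i(P_1^i)_{\gamma_i,\delta_i}$ and $\bigcup_{(\alpha,\beta)\notin A}S_{\alpha,\beta}\cup\bigcup_i(P_2^i)_{\gamma_i,\delta_i}$ are then disjoint open sets separating $F_1$ and $F_2$. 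I expect the main obstacle to be precisely this construction at the zero: one must verify that the patched sets are genuinely $\tau_B^S$-open (invoking base statement $(i)$ and the cofinite form of neighbourhoods of $0$) and that the closure computations for neighbourhoods of $0$ behave as claimed. The finiteness of $A$ is exactly what keeps the maxima finite and the unions open, and it is the feature that makes these verifications go through.
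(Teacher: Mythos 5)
Your proposal is correct in substance and follows the paper's own proof very closely: the three ``only if'' implications via the closed homeomorphic copies $S_{\alpha,\alpha}$ (Lemma~\ref{lemma-2.2} together with heredity of regularity and complete regularity, and the fact that normality passes to closed subspaces), the ``if'' implications at non-zero points via the continuous retractions $x\mapsto(\alpha,1_S,\alpha)\cdot x\cdot(\beta,1_S,\beta)$, and at the zero via the finiteness of $A$ in $U_A(0)$; your $\max_i f_i$ built from transported Urysohn functions is exactly the paper's $\min\{F_1,\ldots,F_n\}$ after the flip $t\mapsto 1-t$, and your closure-shrinking argument for regularity at $0$ is the paper's application of Remark~\ref{remark-2.5}.

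The one genuine gap is in part $(iii)$. Your construction tacitly assumes that one of the two closed sets contains $0$: that is what makes the claim $P_2^i\ni 0_S$ automatic (since then $0_S\in F_2\cap S_{\gamma_i,\delta_i}\subseteq P_2^i$) and hence makes $U(F_2)=\bigcup_{(\alpha,\beta)\notin A}S_{\alpha,\beta}\cup\bigcup_i(P_2^i)_{\gamma_i,\delta_i}$ a neighbourhood of $0$. If $0\notin F_1\cup F_2$ --- the paper's case 2) --- normality of $S$ gives disjoint $P_1^i,P_2^i$ but no control on whether $0_S\in P_2^i$; meanwhile (for infinite $\lambda$) your $U(F_2)$ still contains $0$, because it contains every block off $A$, so it fails to be open exactly when some $P_2^i$ is not a neighbourhood of $0_S$. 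This case therefore needs its own (easier) treatment: either separate inside the finitely many punctured blocks only, taking $U(F_j)=\bigcup_i\bigl(P_j^i\setminus\{0_S\}\bigr)_{\gamma_i,\delta_i}$ for $j=1,2$, as the paper does, or reduce to the case you did handle by replacing $F_2$ with the closed set $F_2\cup\{0\}$, which is still disjoint from $F_1$. With that supplement your argument is complete and coincides with the paper's proof.
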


\begin{proof}
$(i)$ Implication $(\Rightarrow)$ follows from Theorem~2.1.6 of \cite{Engelking1989}.

$(\Leftarrow)$ Suppose that the space $S$ is regular. We consider
the case when cardinal $\lambda$ is infinite. In the case
$\lambda<\omega$ the proof is similar and follows from
Theorem~\ref{theorem-2.6}.

Fix an arbitrary point $x\in B^0_{\lambda}(S)$. Suppose that $x\in
S^*_{\alpha,\beta}$ for some $\alpha,\beta\in\lambda$. Then the
regularity of $S$, Remark~\ref{remark-2.5} and the definition of the topology $\tau_B^{S}$ on $B^0_{\lambda}(S)$ imply that
there exist open neighbourhoods $V(x)$ and $U(x)$ of $x$ in
$\left(B^0_{\lambda}(S),\tau_B^S\right)$ such that
$\operatorname{cl}_{B^0_{\lambda}(S)}(V(x))\subseteq U(x)\subseteq
S^*_{\alpha,\beta}$.

We now turn to the case $x=0$. Let $U_A(0)$ be a basic open neighbourhood of $0$ in $\left(B^0_{\lambda}(S),\tau_B^S\right)$. Now, the regularity of the space $S$ implies that there exists an open neighbourhood $V(0_S)$ of $0_S$ in $S$ such that
$\operatorname{cl}_{S}(V(0_S))\subseteq U(0_S)$. Next, by Remark~\ref{remark-2.5} the set $(S\setminus \operatorname{cl}_{S}(V(0_S)))_{\alpha,\beta}$ is open in $\left(B^0_{\lambda}(S),\tau_B^S\right)$ for any $\alpha,\beta\in\lambda$, and hence Remark~\ref{remark-2.5} implies the following inclusion $\operatorname{cl}_{B^0_{\lambda}(S)}(V_A(0))\subseteq U_A(0)$. This completes the proof of the statement.

$(ii)$ Implication $(\Rightarrow)$ follows from Theorem~2.1.6 of \cite{Engelking1989}.

$(\Leftarrow)$ Suppose that the space $S$ is Tychonoff. We consider the case when cardinal $\lambda$ is infinite. In the case $\lambda<\omega$ the proof is similar applying Theorem~\ref{theorem-2.6}.

Fix an arbitrary point $x\in B^0_{\lambda}(S)$. Suppose that $x\in
S^*_{\alpha,\beta}$ for some $\alpha,\beta\in\lambda$. Since by
Lem\-ma~\ref{lemma-2.2} the subspace $S_{\alpha,\beta}$ is
homeomorphic to $S$, we identify $S_{\alpha,\beta}$ with $S$. Let
$U(x)$ be an open neighbourhood of the point $x$ in
$\left(B^0_{\lambda}(S),\tau_B^S\right)$ such that $U(x)\subseteq
S_{\alpha,\beta}$. Then by Proposition~1.5.8 of
\cite{Engelking1989} there exists a continuous function $f\colon
S=S_{\alpha,\beta}\rightarrow[0,1]$ such that $f(x)=1$ and
$f(y)=0$ for all $y\in S_{\alpha,\beta}\setminus U(x)$. We define
the map $\varphi\colon B^0_{\lambda}(S)\rightarrow
B^0_{\lambda}(S)$ by the formula
$\varphi(x)=(\alpha,1_S,\alpha)\cdot x\cdot (\beta,1_S,\beta)$.
Since $\left(B^0_{\lambda}(S),\tau_B^S\right)$ is a semitopological
semigroup we conclude that such map $\varphi$ is
continuous and hence the map $F\colon
B^0_{\lambda}(S)\rightarrow[0,1]$ defined by the formula
$F(x)=f(\varphi(x))$ is continuous. Moreover we have that $F(x)=1$
and $F(y)=0$ for all $y\in B^0_{\lambda}(S)\setminus U(x)$.

It remains to prove that $\left(B^0_{\lambda}(S),\tau_B^S\right)$ is Tychonoff for $x=0$. Let $n$ be any positive integer. Then by
Proposition~1.5.8 of  \cite{Engelking1989} for every open
neighbourhood $V^i(0_S)$ of zero $0_S$ in $S$, $i=1,\ldots,n$,
there exists a continuous function $f_{V^i}\colon S\rightarrow
[0,1]$ such that $f_{V^i}(0_S)=1$ and $f_{V^i}(y)=0$ for all $y\in
S\setminus V^i(0_S)$. The definition of the topology $\tau_B^{S}$ on $B^0_{\lambda}(S)$ implies that for
every open neighbourhood $W(0)$ of zero $0$ in
$\left(B^0_{\lambda}(S),\tau_B^S\right)$ there exists finitely many
indices
$(\alpha_1,\beta_1),\ldots,(\alpha_n,\beta_n)\in\lambda\times\lambda$
and finitely many neighborhoods $V^1(0_S),\ldots,V^n(0_S)$ of zero
$0_S$ in $S$ such that
$W(0)=V^1_{(\alpha_1,\beta_1)}(0)\cap\cdots\cap
V^n_{(\alpha_n,\beta_n)}(0)$. For any $\alpha,\beta\in\lambda$ we
identify the topological space $S_{\alpha,\beta}$ with $S$. Let
$\varphi\colon B^0_{\lambda}(S)\rightarrow B^0_{\lambda}(S)$ be
the map defined in the proof of statement $(i)$. Then the map
$F_i\colon B^0_{\lambda}(S)\rightarrow[0,1]$ defined by the
formula $F_i(x)=f_{V^i}(\varphi(x))$ is continuous as a
composition of two continuous maps. Now the map $F\colon
B^0_{\lambda}(S)\rightarrow[0,1]$ defined by the formula
$F(x)=\min\{F_1(x),\ldots,F_n(x)\}$ becomes continuous (see
\cite[Section~1.4]{Engelking1989}). Simple verifications show that
$F(0)=1$ and $F(y)=0$ for all $y\in B^0_{\lambda}(S)\setminus
W(0)$. Then Proposition~1.5.8 of \cite{Engelking1989} completes
the proof of the statement.

Next we shall prove statement $(iii)$.

$(\Rightarrow)$ Suppose that $\left(B^0_{\lambda}(S),\tau_B^{S}\right)$ is a normal space. By Lemma~\ref{lemma-2.2} we have that $S_{\alpha,\alpha}$ is a closed subset of $\left(B^0_{\lambda}(S),\tau_B^{S}\right)$ for every $\alpha\in\lambda$. Then by Theorem~2.1.6 from \cite{Engelking1989},  $S_{\alpha,\alpha}$ is a normal subspace of $\left(B^0_{\lambda}(S),\tau_B^{S}\right)$ and hence Definition~\ref{definition-1.1} and Lemma~\ref{lemma-2.1} imply that $S$ is a normal space.

$(\Leftarrow)$ Suppose that $S$ is a normal space. Let $F_1$ and $F_2$ be arbitrary closed disjoint subsets of $\left(B^0_{\lambda}(S),\tau_B^{S}\right)$. Then only one of the following cases holds:
\begin{itemize}
  \item[1)] $0\in F_1$ and $0\notin F_2$;
  \item[2)] $0\notin F_1$ and $0\notin F_2$.
\end{itemize}

Suppose case 1) holds. Since $0\notin F_2$ we conclude that the definition of the topology $\tau_B^{S}$ on $B^0_{\lambda}(S)$ implies that there exists finitely many pairs of indices $(\alpha_1,\beta_1),\ldots, (\alpha_n,\beta_n)$ such that $F_2\subseteq S^*_{\alpha_1,\beta_1}\cup\cdots\cup S^*_{\alpha_n,\beta_n}$. By Definition~\ref{definition-1.1} and Lemma~\ref{lemma-2.1} we have that the space $S$ is homeomorphic to the subspace $S_{\alpha,\beta}$ of $\left(B^0_{\lambda}(S),\tau_B^{S}\right)$ and hence $S_{\alpha,\beta}$ is a normal subspace of $\left(B^0_{\lambda}(S),\tau_B^{S}\right)$ for all $\alpha,\beta\in\lambda$. Then there exist finitely many pairs of open subsets $\left(U^1_{\alpha_1,\beta_1},U^2_{\alpha_1,\beta_1}\right),\ldots, \left(U^1_{\alpha_n,\beta_n},U^2_{\alpha_n,\beta_n}\right)$ in $S_{\alpha_1,\beta_1},\ldots,S_{\alpha_n,\beta_n}$, respectively, such that the following conditions hold:
\begin{align*}
     F_1\cap S_{\alpha_1,\beta_1}\subseteq U^1_{\alpha_1,\beta_1}\subseteq S_{\alpha_1,\beta_1},\;\ldots,\;
    & F_1\cap S_{\alpha_n,\beta_n}\subseteq U^1_{\alpha_n,\beta_n}\subseteq S_{\alpha_n,\beta_n},\\
     F_2\cap S_{\alpha_1,\beta_1}\subseteq U^2_{\alpha_1,\beta_1}\subseteq S_{\alpha_1,\beta_1},\;\ldots,\;
    & F_2\cap S_{\alpha_n,\beta_n}\subseteq U^2_{\alpha_n,\beta_n}\subseteq S_{\alpha_n,\beta_n}.
\end{align*}
Put
\begin{equation*}
    U(F_1)=U^1_{\alpha_1,\beta_1}\cup\cdots\cup U^1_{\alpha_n,\beta_n}\cup \bigcup\left\{S_{\alpha,\beta}\mid (\alpha,\beta)\in\lambda\times\lambda \setminus\{(\alpha_1,\beta_1),\ldots,(\alpha_n,\beta_n)\}\right\}
\end{equation*}
and $U(F_2)=U^2_{\alpha_1,\beta_1}\cup\cdots\cup U^2_{\alpha_n,\beta_n}$. Then we have that $U(F_1)\cap U(F_2)=\varnothing$ and the definition of the topology $\tau_B^{S}$ implies that $U(F_1)$ and $U(F_2)$ are open neighbourhoods of closed sets $F_1$ and $F_2$ in $\left(B^0_{\lambda}(S),\tau_B^{S}\right)$, respectively.

Suppose case 2) holds. Then the definition of the topology $\tau_B^{S}$ implies that there exists finitely many pairs of indices $(\alpha_1,\beta_1),\ldots, (\alpha_n,\beta_n)$ such that $F_1\cup F_2\subseteq S^*_{\alpha_1,\beta_1}\cup\cdots\cup S^*_{\alpha_n,\beta_n}$. Now, similarly as in case 1) we can find finitely many pairs of open subsets $\left(U^1_{\alpha_1,\beta_1},U^2_{\alpha_1,\beta_1}\right),\ldots,$ $\left(U^1_{\alpha_n,\beta_n},U^2_{\alpha_n,\beta_n}\right)$ in $S_{\alpha_1,\beta_1},\ldots,S_{\alpha_n,\beta_n}$, respectively, such that the following conditions hold:
\begin{align*}
     F_1\cap S_{\alpha_1,\beta_1}\subseteq U^1_{\alpha_1,\beta_1}\subseteq S_{\alpha_1,\beta_1},\;\ldots,\;
    & F_1\cap S_{\alpha_n,\beta_n}\subseteq U^1_{\alpha_n,\beta_n}\subseteq S_{\alpha_n,\beta_n},\\
     F_2\cap S_{\alpha_1,\beta_1}\subseteq U^2_{\alpha_1,\beta_1}\subseteq S_{\alpha_1,\beta_1},\;\ldots,\;
    & F_2\cap S_{\alpha_n,\beta_n}\subseteq U^2_{\alpha_n,\beta_n}\subseteq S_{\alpha_n,\beta_n}.
\end{align*}
We put $U(F_1)=U^1_{\alpha_1,\beta_1}\cup\cdots\cup U^1_{\alpha_n,\beta_n}$ and $U(F_2)=U^2_{\alpha_1,\beta_1}\cup\cdots\cup U^2_{\alpha_n,\beta_n}$. Then we have that $U(F_1)\cap U(F_2)=\varnothing$ and the definition of the topology $\tau_B^{S}$ implies that $U(F_1)$ and $U(F_2)$ are open neighbourhoods of closed sets $F_1$ and $F_2$ in $\left(B^0_{\lambda}(S),\tau_B^{S}\right)$, respectively. This completes the proof of the statement.
\end{proof}

Further we shall need the following lemma.

\begin{lemma}\label{lemma-2.13}
Let $S$ be a semigroup with the zero $0_S$. If a Hausdorff semitopological semigroup $T$ contains $S$ as a dense subsemigroup then $0_S$ is zero in $T$.
\end{lemma}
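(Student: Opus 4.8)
The goal is to verify that $0_S$ absorbs every element of $T$ on both sides, i.e. $0_S\cdot x=x\cdot 0_S=0_S$ for all $x\in T$. The plan is to exploit separate continuity together with the density of $S$: in a semitopological semigroup each one-sided translation is continuous, and two continuous maps into a Hausdorff space that agree on a dense subset coincide.

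First I would treat left multiplication. Define $\ell\colon T\to T$ by $\ell(x)=0_S\cdot x$; since $T$ is semitopological, $\ell$ is continuous. Let $c\colon T\to T$ be the constant map $c(x)=0_S$, which is trivially continuous. For every $s\in S$ we have $\ell(s)=0_S\cdot s=0_S=c(s)$, because $0_S$ is the zero of $S$; thus $\ell$ and $c$ agree on $S$. The set $\{x\in T\mid \ell(x)=c(x)\}$ is the preimage of the diagonal $\Delta_T$ under the map $x\mapsto(\ell(x),c(x))$, and $\Delta_T$ is closed since $T$ is Hausdorff; hence this agreement set is closed. As it contains the dense subset $S$, it equals $T$, so $0_S\cdot x=0_S$ for every $x\in T$. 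Replacing $\ell$ by the right translation $r(x)=x\cdot 0_S$ and repeating the argument verbatim gives $x\cdot 0_S=0_S$ for all $x\in T$, which shows that $0_S$ is the zero of $T$.

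There is no genuine obstacle here: the only points to check are that separate continuity of the multiplication is exactly what makes each one-sided translation continuous (joint continuity is never needed), and that the Hausdorff hypothesis is precisely what forces two continuous maps agreeing on a dense set to be equal. If one prefers to avoid the diagonal argument, the same conclusion follows by a direct neighbourhood computation: assuming $0_S\cdot x\neq 0_S$ for some $x\in T$, separate the two points by disjoint open sets $U\ni 0_S\cdot x$ and $V\ni 0_S$, use continuity of $\ell$ to pull $U$ back to an open neighbourhood of $x$, and intersect it with $S$ by density to obtain some $s\in S$ with $0_S=0_S\cdot s\in U$; then $0_S\in U\cap V$, contradicting $U\cap V=\varnothing$.
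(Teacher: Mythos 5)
Your proposal is correct. Your primary argument differs in packaging from the paper's: you invoke the standard principle that two continuous maps into a Hausdorff space which agree on a dense subset must coincide, applying it to the translation $\ell(x)=0_S\cdot x$ and the constant map $c(x)=0_S$ (the agreement set is closed as the preimage of the diagonal $\Delta_T$, and contains the dense set $S$). The paper instead runs a hands-on proof by contradiction: assuming $t\cdot 0_S=x\neq 0_S$, it separates $x$ from $0_S$ by disjoint open sets, pulls the neighbourhood of $x$ back along the (separately continuous) translation to a neighbourhood $V(t)$, and uses density to find $s\in V(t)\cap S$, so that $0_S=s\cdot 0_S$ lands in the neighbourhood of $x$, a contradiction --- which is, up to which side of the multiplication is treated first, exactly the ``direct neighbourhood computation'' you sketch as an alternative at the end. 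Both proofs use precisely the same ingredients (separate continuity of translations, Hausdorffness, density), so neither is more general; what your formulation buys is brevity and reusability --- the same agreement-set principle immediately yields the companion Lemma~\ref{lemma-2.14} about units, which the paper proves ``in the same manner'' --- while the paper's version is self-contained and avoids appealing to the product topology and the closedness of the diagonal.
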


\begin{proof}
On the contrary, suppose that $0_S$ is not zero in $T$. Then there exists $t\in T$ such that either $t\cdot 0_S\neq 0_S$ or $0_S\cdot t\neq 0_S$. If $t\cdot 0_S=x\neq 0_S$ then the Hausdorffness of $T$ implies that there exist open neighbourhoods $U(0_S)$ and $U(x)$ of the points $0_S$ and $x$ in $T$, respectively, such that $U(0_S)\cap U(x)=\varnothing$. Also, the separate continuity of the semigroup operation in $T$ implies that there exists an open neighbourhood $V(t)$ of the point $t$ in $T$ such that $V(t)\cdot 0_S\subseteq U(x)$. Since $S$ is a dense subsemigroup of $T$ we conclude that $V(t)\cap S\neq\varnothing$ and hence we have that $0_S\in (V(t)\cap S)\cdot 0_S\subseteq V(t)\cdot 0_S\subseteq U(x)$, a contradiction. The obtained contradiction implies that $t\cdot 0_S=0_S$. The proof of the equality $0_S\cdot t=0_S$ is similar.
\end{proof}

In the same manner we can see that next lemma holds.

\begin{lemma}\label{lemma-2.14}
Let $S$ be a semigroup with the unit $1_S$. If a Hausdorff semitopological semigroup $T$ contains $S$ as a dense subsemigroup then $1_S$ is unit in $T$.
\end{lemma}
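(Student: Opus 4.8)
The plan is to mirror the proof of Lemma~\ref{lemma-2.13}, replacing the absorbing behaviour of the zero by the neutral behaviour of the unit. First I would argue by contradiction: assume $1_S$ is not a unit in $T$, so there exists some $t\in T$ with either $t\cdot 1_S\neq t$ or $1_S\cdot t\neq t$. By the evident left--right symmetry it suffices to treat the first case, say $t\cdot 1_S=x\neq t$, and to derive a contradiction; the verification of $1_S\cdot t=t$ is then identical with left translations in place of right ones.

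Next I would separate the relevant points. Since $T$ is Hausdorff, there exist disjoint open neighbourhoods $U(t)$ of $t$ and $U(x)$ of $x$. The separate continuity of the semigroup operation in $T$ (continuity of right translation by $1_S$) yields an open neighbourhood $V(t)$ of $t$ with $V(t)\cdot 1_S\subseteq U(x)$; intersecting with $U(t)$ I would further arrange $V(t)\subseteq U(t)$. The crucial step then invokes density of $S$: because $S$ is dense in $T$ we have $V(t)\cap S\neq\varnothing$, so I can choose a point $s\in V(t)\cap S$. Since $1_S$ is the unit of $S$ and $s\in S$, it follows that $s\cdot 1_S=s$, whence $s=s\cdot 1_S\in V(t)\cdot 1_S\subseteq U(x)$; on the other hand $s\in V(t)\subseteq U(t)$. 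Thus $s\in U(t)\cap U(x)=\varnothing$, a contradiction.

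The one place where this argument genuinely departs from the template of Lemma~\ref{lemma-2.13} — and hence the only step needing care — is the following. In the zero case every product $s\cdot 0_S$ collapses onto the single fixed point $0_S$, so it suffices to find a neighbourhood of $t$ witnessing continuity, and the contradiction is read off at the fixed point $0_S$. Here the witness point $s$ is \emph{moving}, so the neighbourhood of $x$ alone does not trap it; I must additionally shrink $V(t)$ so that $V(t)\subseteq U(t)$, which is precisely what lets me conclude that $s$ lies simultaneously in the two disjoint neighbourhoods $U(t)$ and $U(x)$. This single adjustment to the argument of Lemma~\ref{lemma-2.13} is what justifies the remark that the lemma holds ``in the same manner''.
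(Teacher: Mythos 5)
Your proof is correct and follows exactly the route the paper intends: the paper gives no separate proof of Lemma~\ref{lemma-2.14}, saying only that it holds ``in the same manner'' as Lemma~\ref{lemma-2.13}, and your argument is precisely that adaptation. Your closing observation --- that the witness point $s\cdot 1_S=s$ moves with $s$, so one must additionally shrink $V(t)\subseteq U(t)$ to land in two disjoint neighbourhoods, a step not needed in the zero case where the contradiction is read off at the fixed point $0_S$ --- correctly pinpoints the one place where the analogy requires care.
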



\section{Stone-\v{C}ech and Bohr compactifications of pseudocompact Brandt $\lambda^0$-extensions of semitopological monoids}

We shall say that a Tychonoff space $X$ has the \emph{Grothendieck property} if for any Tychonoff space $Z$ every separately continuous map $f\colon X\times X\rightarrow Z$ can be extended to separately continuous map $\widehat{f}\colon\beta X\times\beta X\rightarrow\beta Z$. We observe that if $X, Y$ and $Z$ are Tychonoff pseudocompact spaces then every continuous map $f\colon X\times X\rightarrow Z$ can be extended to separately continuous map $\widehat{f}\colon\beta X\times\beta X\rightarrow\beta Z$ \cite{Reznichenko1994}. Also \cite{Arhangel'skij1984, Reznichenko1994}, a Tychonoff space $X$ has the Grothendieck property provides that $X$ is pseudocompact and one of the following conditions holds:
\begin{itemize}
  \item[$(i)$] $X$ is countably compact;
  \item[$(ii)$] $X$ has the countable tightness (i.e., $t(X)=\kappa$, where $t(X)$ is the minimal infinite cardinal $\kappa$ such that if $x\in X$, $A\subset X$ and $x\in\operatorname{cl}_X(A)$ then there exists $B\subseteq A$ such that $|B|\leqslant\kappa$ and $x\in\operatorname{cl}_X(B)$);
  \item[$(iii)$] $X$ is separable;
  \item[$(iv)$] $X$ is a $k$-space (i.e., $X$ is a Hausdorff space which can be represented as a quotient space of a locally compact space).
\end{itemize}

By Proposition~1.11 of \cite{Reznichenko1994} we have that if a Tychonoff space $X$ has the Grothendieck property then every separately continuous map $f\colon X\times X\rightarrow X$ extends to a separately continuous map $\widehat{f}\colon\beta X\times\beta X\rightarrow\beta X$, and hence the following proposition holds:

\begin{proposition}\label{proposition-2.15}
Let $S$ be a Tychonoff semitopological semigroup with the
Grothendieck property.  Then $\beta S$ is a compact
semitopological semigroup. Moreover, if $1_S$ is the unit of $S$
$($resp., $0_S$ is the zero of $S)$ then $1_S$ is the unit of
$\beta S$ $($resp., $0_S$ is the zero of $\beta S)$.
\end{proposition}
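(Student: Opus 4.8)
The plan is to transport the multiplication of $S$ to $\beta S$ by means of the Grothendieck property and then to verify, by iterated density arguments, that the extended operation inherits associativity, separate continuity, and the unit/zero. Since $S$ is a semitopological semigroup, its multiplication $\mu\colon S\times S\to S$, $\mu(s,t)=s\cdot t$, is separately continuous. Because $S$ is Tychonoff with the Grothendieck property, the cited Proposition~1.11 of \cite{Reznichenko1994} yields a separately continuous extension $\widehat{\mu}\colon\beta S\times\beta S\to\beta S$ with $\widehat{\mu}|_{S\times S}=\mu$. Write $x\ast y=\widehat{\mu}(x,y)$ for $x,y\in\beta S$. The space $\beta S$ is compact Hausdorff by the very definition of the Stone-\v{C}ech compactification, so it only remains to check that $\ast$ is associative, which together with its separate continuity makes $(\beta S,\ast)$ a compact semitopological semigroup.

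First I would establish associativity by the standard three-stage argument, exploiting that $S$ is dense in $\beta S$ and that $\ast$ is continuous in each variable separately. Fix $y,z\in S$; the maps $x\mapsto(x\ast y)\ast z$ and $x\mapsto x\ast(y\ast z)$ are both continuous in $x$ (each is a composition of single-variable continuous maps, and $y\ast z=\mu(y,z)\in S$ is a constant), and they coincide on the dense set $S$ by associativity in $S$, hence on all of $\beta S$. Next, fixing $x\in\beta S$ and $z\in S$, the maps $y\mapsto(x\ast y)\ast z$ and $y\mapsto x\ast(y\ast z)$ are continuous in $y$ and agree for $y\in S$ by the previous stage, so they agree for all $y\in\beta S$. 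Finally, fixing $x,y\in\beta S$, the maps $z\mapsto(x\ast y)\ast z$ and $z\mapsto x\ast(y\ast z)$ are continuous in $z$ and agree for $z\in S$, hence for all $z\in\beta S$. This yields $(x\ast y)\ast z=x\ast(y\ast z)$ throughout $\beta S\times\beta S\times\beta S$.

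The transfer of the unit and the zero is an immediate density argument of the same flavour. If $1_S$ is the unit of $S$, then $x\mapsto 1_S\ast x$ is continuous on $\beta S$ and coincides with the identity map on the dense subset $S$, whence $1_S\ast x=x$ for every $x\in\beta S$; the identity $x\ast 1_S=x$ follows symmetrically, so $1_S$ is the unit of $\beta S$. Likewise, if $0_S$ is the zero of $S$, then $x\mapsto 0_S\ast x$ is continuous and equals the constant map $x\mapsto 0_S$ on $S$, hence on all of $\beta S$, and symmetrically for $x\ast 0_S$; thus $0_S$ is the zero of $\beta S$.

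The one genuinely delicate point is the associativity verification. Separately continuous maps that agree on a dense subset need not agree globally, so the scheme must be arranged so that at each stage one compares two maps that are continuous in the single free variable, with the remaining variables held fixed, and that agree on a dense subset of that variable's factor. The composites $(x\ast y)\ast z$ and $x\ast(y\ast z)$ are built precisely so that this holds: checking the continuity of each composite in the relevant variable uses only the separate continuity of $\ast$ together with the observation that, at each stage, the inner product has all but one of its entries frozen. This is the crux; the compactness of $\beta S$ and the unit/zero transfer are then essentially automatic.
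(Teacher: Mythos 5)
Your proposal is correct and follows essentially the same route as the paper: the separately continuous extension $\widehat{\mu}\colon\beta S\times\beta S\rightarrow\beta S$ comes from the Grothendieck property via Proposition~1.11 of \cite{Reznichenko1994}, and the unit/zero statements are obtained by density arguments, which is exactly the content of the paper's Lemmas~\ref{lemma-2.13} and~\ref{lemma-2.14}. The only difference is one of completeness in your favour: you spell out the three-stage associativity verification for $\ast$, a step the paper leaves implicit when it asserts that the proposition "hence holds."
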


We observe that the last statement of Proposition~\ref{proposition-2.15} follows from  Lemmas~\ref{lemma-2.13} and ~\ref{lemma-2.14}.

\begin{theorem}\label{theorem-2.16}
Let $S$ be a Tychonoff semitopological monoid with the
Grothendieck property and zero. Then for an arbitrary cardinal
$\lambda\geqslant 1$ and the Tychonoff  pseudocompact topological
Brandt $\lambda^0$-extension
$\left(B^0_{\lambda}(S),\tau_{B}^S\right)$ of $S$ in the class of
semitopological semigroups the compact topological Brandt
$\lambda^0$-extension $\Big(B^0_{\lambda}(\beta S),\tau_{B}^{\beta
S}\Big)$ of $\beta S$ contains
$\left(B^0_{\lambda}(S),\tau_{B}^S\right)$.
\end{theorem}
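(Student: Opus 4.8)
The plan is to realize $\left(B^0_{\lambda}(S),\tau_B^S\right)$ as a subsemigroup and subspace of $\Big(B^0_{\lambda}(\beta S),\tau_B^{\beta S}\Big)$ through the natural inclusion induced by $S\hookrightarrow\beta S$, and then to check that this inclusion is at once an algebraic and a topological embedding. First I would set up the target object. Since $S$ is a Tychonoff semitopological semigroup with the Grothendieck property, Proposition~\ref{proposition-2.15} guarantees that $\beta S$ is a compact semitopological semigroup in which $1_S$ is still the unit and $0_S$ is still the zero; hence $\beta S$ is a Hausdorff compact semitopological monoid with zero, and Corollary~\ref{corollary-2.7d} supplies the unique Hausdorff compact topological Brandt $\lambda^0$-extension $\Big(B^0_{\lambda}(\beta S),\tau_B^{\beta S}\Big)$, whose topology is generated by the base described there.

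Next I would define the comparison map. Because $S$ is a subsemigroup of $\beta S$ sharing the same unit $1_S$ and the same zero $0_S$, the inclusion $S\hookrightarrow\beta S$ induces a map $\iota\colon B^0_{\lambda}(S)\to B^0_{\lambda}(\beta S)$ sending $(\alpha,s,\beta)\mapsto(\alpha,s,\beta)$ and $0\mapsto 0$. The multiplication in a Brandt $\lambda^0$-extension depends only on whether the inner indices match and on the product of the middle coordinates, and these products agree in $S$ and in $\beta S$ for elements of $S$; so $\iota$ is a semigroup monomorphism. This is the routine algebraic part of the argument.

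The core of the proof is to show that $\iota$ is a topological embedding, i.e.\ that the subspace topology on $B^0_{\lambda}(S)$ inherited from $\tau_B^{\beta S}$ coincides with $\tau_B^S$. Here I would compare the two bases block by block. At a nonzero point $(\alpha,s,\beta)$, a basic $\tau_B^{\beta S}$-neighbourhood has the form $(U)_{\alpha,\beta}\setminus\{0\}$ with $U$ open in $\beta S$ and $s\in U$; its trace on $B^0_{\lambda}(S)$ is $(U\cap S)_{\alpha,\beta}\setminus\{0\}$, and as $U$ ranges over a neighbourhood base of $s$ in $\beta S$ the sets $U\cap S$ range over a neighbourhood base of $s$ in $S$, since $S$ carries the subspace topology of $\beta S$. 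At the zero, intersecting a basic neighbourhood $U_A(0)=\bigcup_{(\alpha,\beta)\notin A}(\beta S)_{\alpha,\beta}\cup\bigcup_{(\gamma,\delta)\in A}(U(0_S))_{\gamma,\delta}$ with $B^0_{\lambda}(S)$ yields exactly $\bigcup_{(\alpha,\beta)\notin A}S_{\alpha,\beta}\cup\bigcup_{(\gamma,\delta)\in A}(U(0_S)\cap S)_{\gamma,\delta}$, using the identity $(\beta S)_{\alpha,\beta}\cap B^0_{\lambda}(S)=S_{\alpha,\beta}$; this is precisely a basic $\tau_B^S$-neighbourhood of $0$. Thus the traces of the two bases coincide, so $\iota$ is a homeomorphism onto its image, and together with the algebraic step it is a topological isomorphism onto a subsemigroup of $B^0_{\lambda}(\beta S)$.

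I expect the main obstacle to be not the base comparison, which is essentially bookkeeping, but securing the structural preliminaries that make the target well defined: that $\beta S$ remains a semitopological \emph{monoid with zero}, so that $B^0_{\lambda}(\beta S)$ is even defined and $\iota$ respects the algebra, and that $\tau_B^{\beta S}$ is the specific compact topology of Corollary~\ref{corollary-2.7d} whose base has the same shape as the base of $\tau_B^S$. The first point is exactly the content of the last statement of Proposition~\ref{proposition-2.15}, which in turn rests on Lemmas~\ref{lemma-2.13} and~\ref{lemma-2.14}; the second is Corollary~\ref{corollary-2.7d}. Once these are in hand, the embedding verification proceeds directly.
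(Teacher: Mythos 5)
Your proposal is correct and follows essentially the same route as the paper: both invoke Proposition~\ref{proposition-2.15} to make $\beta S$ a compact semitopological monoid with zero and unit, realize $B^0_{\lambda}(\beta S)$ with its canonical compact topology (you via Corollary~\ref{corollary-2.7d}, the paper via Theorem~\ref{theorem-2.10} --- interchangeable here, since the two bases coincide), and then embed $B^0_{\lambda}(S)$ coordinatewise. The only difference is that the paper dismisses the embedding verification as ``easily seen,'' whereas you carry out the block-by-block comparison of basic neighbourhoods explicitly, which is a harmless (indeed welcome) expansion of the same argument.
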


\begin{proof}
By Proposition~\ref{proposition-2.15}, $\beta S$ is a compact semitopological monoid with zero $0_S$ and the unit $1_S$, which contains $S$ as a dense subsemigroup. We extend the embedding map $\beta\colon S\rightarrow\beta S$ onto $\beta_B^*\colon B^0_{\lambda}(S)\rightarrow B^0_{\lambda}(\beta S)$ by the formulae $\beta_B^*(0)=0$ and $\beta_B^*(\gamma,s,\delta)=(\gamma,\beta(s),\delta)$ for all $\gamma,\delta\in\lambda$. Then by Theorem~\ref{theorem-2.10}, $B^0_{\lambda}(\beta S)$ with the topology $\tau_{B}^{\beta S}$ is a compact topological Brandt $\lambda^0$-extension of $\beta S$ in the class of semitopological semigroups. It is easily seen that the map $\beta_B^*\colon B^0_{\lambda}(S)\rightarrow B^0_{\lambda}(\beta S)$ is an isomorphic topological embedding of the semitopological semigroup $\left(B^0_{\lambda}(S),\tau_{B}^S\right)$ into the compact semitopological semigroup $\left(B^0_{\lambda}(\beta S),\tau_{B}^{\beta S}\right)$.
\end{proof}

Theorem~\ref{theorem-2.16} implies the following corollary:

\begin{corollary}\label{corollary-2.17}
Let $S$ be a Tychonoff pseudocompact semitopological monoid with
zero such that one the following conditions holds:
\begin{itemize}
  \item[$(i)$] $S$ is countably compact;
  \item[$(ii)$] $S$ has the countable tightness;
  \item[$(iii)$] $S$ is separable;
  \item[$(iv)$] $S$ is a $k$-space.
\end{itemize}
Then for arbitrary cardinal $\lambda\geqslant 1$ and for the Tychonoff pseudocompact topological Brandt $\lambda^0$-extension $\left(B^0_{\lambda}(S),\tau_{B}^S\right)$ of $S$ in the class of semitopological semigroups the compact topological Brandt $\lambda^0$-extension $\Big(B^0_{\lambda}(\beta S),\tau_{B}^{\beta S}\Big)$ of $\beta S$ contains $\left(B^0_{\lambda}(S),\tau_{B}^S\right)$.
\end{corollary}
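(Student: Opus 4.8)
The plan is to derive this directly from Theorem~\ref{theorem-2.16}, whose only hypothesis on $S$ beyond being a Tychonoff semitopological monoid with zero is the Grothendieck property. Thus the entire task reduces to verifying that, for a Tychonoff pseudocompact $S$, each of the conditions $(i)$--$(iv)$ guarantees that $S$ has the Grothendieck property. First I would recall the sufficient conditions for the Grothendieck property recorded in the discussion preceding Proposition~\ref{proposition-2.15}: by \cite{Reznichenko1994} (and the references cited there), a Tychonoff pseudocompact space enjoys the Grothendieck property whenever it is countably compact, has countable tightness, is separable, or is a $k$-space. These are precisely the four alternatives $(i)$--$(iv)$ in the present statement.

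Since $S$ is assumed Tychonoff and pseudocompact, any one of the hypotheses $(i)$--$(iv)$ places $S$ in exactly the situation covered by those results, so $S$ has the Grothendieck property. Consequently $S$ is a Tychonoff semitopological monoid with the Grothendieck property and zero, which is verbatim the hypothesis of Theorem~\ref{theorem-2.16}. Applying that theorem for the given cardinal $\lambda\geqslant 1$ yields that the compact topological Brandt $\lambda^0$-extension $\left(B^0_{\lambda}(\beta S),\tau_{B}^{\beta S}\right)$ of $\beta S$ contains $\left(B^0_{\lambda}(S),\tau_{B}^S\right)$, which is the assertion to be proved.

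There is essentially no obstacle here beyond bookkeeping: the alternatives $(i)$--$(iv)$ were selected to coincide with the known sufficient conditions for the Grothendieck property, so the corollary is a transcription of Theorem~\ref{theorem-2.16} under a verified hypothesis. The one point that deserves a word is the tacit claim that the Tychonoff pseudocompact extension $\left(B^0_{\lambda}(S),\tau_{B}^S\right)$ named in the statement genuinely exists, is unique, and is itself Tychonoff; this is furnished by Theorem~\ref{theorem-2.10} together with part $(ii)$ of Proposition~\ref{proposition-2.19}, so the invocation of Theorem~\ref{theorem-2.16} is legitimate and no additional argument is required.
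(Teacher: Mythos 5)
Your proof is correct and follows exactly the paper's route: the paper derives this corollary immediately from Theorem~\ref{theorem-2.16}, using the sufficient conditions for the Grothendieck property of a pseudocompact Tychonoff space listed at the start of Section~3 (citing \cite{Arhangel�skij1984, Reznichenko1994}), which are precisely the alternatives $(i)$--$(iv)$. Your additional remark on the existence and Tychonoffness of $\left(B^0_{\lambda}(S),\tau_{B}^S\right)$ via Theorem~\ref{theorem-2.10} and Proposition~\ref{proposition-2.19}~$(ii)$ is sound bookkeeping that the paper leaves implicit.
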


If $X$ is a topological space, then by $d(X)$ we denote the density of $X$, i.e.,
\begin{equation*}
d(X)=\min\left\{|A|\mid A \hbox{ is a dense subset of } X\right\}.
\end{equation*}

\begin{proposition}\label{proposition-2.20}
Let $\lambda$ by any cardinal $\geqslant 1$ and $S$ be a Hausdorff semitopological semigroup. Then the following statements hold:
\begin{itemize}
  \item[$(i)$] $t\left(B^0_{\lambda}(S),\tau_{B}^{S}\right)=t(S)$;
  \item[$(ii)$] $d\left(B^0_{\lambda}(S),\tau_{B}^{S}\right)=\max\left\{\lambda, d(S)\right\}$;
  \item[$(iii)$] $\left(B^0_{\lambda}(S),\tau_{B}^{S}\right)$ is separable if and only if $S$ is separable and $\lambda\leqslant\omega$;
  \item[$(iv)$] $\left(B^0_{\lambda}(S),\tau_{B}^{S}\right)$ is $k$-space if and only if $S$ is $k$-space.
\end{itemize}
\end{proposition}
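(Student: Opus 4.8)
The plan is to reduce every statement to two structural facts established earlier: by Lemma~\ref{lemma-2.2} each $S_{\alpha,\beta}^*$ is open in $\left(B^0_{\lambda}(S),\tau_B^S\right)$, while by Lemma~\ref{lemma-2.1} each $S_{\alpha,\beta}$ is a closed subspace homeomorphic to $S$ with the point $0$ corresponding to $0_S$. The combinatorial heart of the argument is the following description of the closure of a set at the zero, read off directly from the base $\mathscr{B}_B(0)$ of Theorem~\ref{theorem-2.10}: for $A\subseteq B^0_{\lambda}(S)$ one has $0\in\operatorname{cl}_{B^0_{\lambda}(S)}(A)$ if and only if either $A$ meets $S_{\alpha,\beta}^*$ for infinitely many pairs $(\alpha,\beta)$, or $A$ meets only finitely many blocks $S_{\alpha,\beta}^*$ and $0_S\in\operatorname{cl}_S(P_{\alpha,\beta})$ for at least one of them, where $P_{\alpha,\beta}=\{s\in S\mid(\alpha,s,\beta)\in A\}$. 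I would prove this equivalence first, since statements $(i)$ and $(iv)$ both hinge on it.

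For $(i)$, tightness does not increase when passing to a subspace, so $t(S)=t(S_{\alpha,\alpha})\leqslant t\left(B^0_{\lambda}(S),\tau_B^S\right)$ gives the lower bound. For the reverse inequality put $\kappa=t(S)$ and take $x\in\operatorname{cl}_{B^0_{\lambda}(S)}(A)$. If $x\in S_{\alpha,\beta}^*$, then since $S_{\alpha,\beta}^*$ is open one has $x\in\operatorname{cl}_{B^0_{\lambda}(S)}(A\cap S_{\alpha,\beta}^*)$, and the tightness of $S_{\alpha,\beta}\cong S$ produces a witnessing subset of size $\leqslant\kappa$. If $x=0$, I would use the closure description above: in the first case a countable choice of one point from each of infinitely many blocks already witnesses $0\in\operatorname{cl}_{B^0_{\lambda}(S)}$ (and $\omega\leqslant\kappa$); in the second case I apply the tightness of $S$ to the relevant set $P_{\alpha,\beta}$ and lift a witnessing set of size $\leqslant\kappa$ back into $S_{\alpha,\beta}^*$. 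Hence $t\left(B^0_{\lambda}(S),\tau_B^S\right)\leqslant\kappa$.

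For $(ii)$, the $|\lambda\times\lambda|$ sets $S_{\alpha,\beta}^*$ are pairwise disjoint, nonempty and open, so any dense set meets each of them and has cardinality $\geqslant\lambda$; intersecting a dense set with the open block $S_{\alpha,\alpha}^*$ and projecting gives, together with $0_S$, a dense subset of $S$, so the density is also $\geqslant d(S)$. Conversely, if $D_S$ is dense in $S$ then $\{0\}\cup\bigcup_{(\alpha,\beta)}(D_S)_{\alpha,\beta}^*$ is dense in $B^0_{\lambda}(S)$ of cardinality $\leqslant\lambda\cdot d(S)=\max\{\lambda,d(S)\}$, which yields the upper bound; here I use the standard convention that these cardinal invariants are at least $\omega$, so as to set aside degenerate finite cases. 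Statement $(iii)$ is then immediate from $(ii)$, since separability means density $\leqslant\omega$ and $\max\{\lambda,d(S)\}\leqslant\omega$ is equivalent to $\lambda\leqslant\omega$ together with $d(S)\leqslant\omega$.

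For $(iv)$, note first that $B^0_{\lambda}(S)$ is Hausdorff precisely when $S$ is (Proposition~\ref{proposition-2.18}). The direction $(\Rightarrow)$ is easy: $S\cong S_{\alpha,\alpha}$ is a closed subspace of the $k$-space $B^0_{\lambda}(S)$, and closed subspaces of $k$-spaces are $k$-spaces \cite[Section~3.3]{Engelking1989}. The substantive direction is $(\Leftarrow)$, which I would prove through the closed-set characterization of $k$-spaces: assuming $A\cap K$ is closed in $K$ for every compact $K$, I show $A$ is closed. A closure point $x\in S_{\alpha,\beta}^*$ lies in $A$ because $S_{\alpha,\beta}$ is a closed $k$-subspace and compact subsets of $S_{\alpha,\beta}$ are compact in $B^0_{\lambda}(S)$. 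The remaining point $x=0$ is where the work lies and is the main obstacle: assuming $0\notin A$, I must exhibit a compact $K$ with $A\cap K$ not closed. If $A$ meets infinitely many blocks, choosing one point per block gives a sequence converging to $0$ whose underlying compact set does the job; if $A$ meets finitely many blocks, I use the closure description to locate a block whose projection $P_{\alpha,\beta}$ is not closed in $S$, invoke the $k$-space property of $S$ to get a compact $K'\subseteq S$ with $P_{\alpha,\beta}\cap K'$ not closed, and transport $K'$ into $S_{\alpha,\beta}$ by the homeomorphism of Lemma~\ref{lemma-2.1}. In each case the contradiction forces $0\in A$, so $B^0_{\lambda}(S)$ is a $k$-space. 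The delicate points to verify carefully are the compactness of the constructed witnesses and the exact correspondence of $A\cap S_{\alpha,\beta}^*$ with $P_{\alpha,\beta}$ near $0_S$.
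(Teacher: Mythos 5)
Your proposal is correct, and for statements $(i)$--$(iii)$ and the forward implication of $(iv)$ it is essentially the paper's own argument: the paper, too, gets $t(S)\leqslant t\left(B^0_{\lambda}(S),\tau_{B}^{S}\right)$ from the embedded copies $S_{i,j}$, splits the tightness estimate at $0$ into the ``infinitely many blocks'' case (a countable transversal witnesses the closure) and the ``finitely many blocks'' case (tightness applied inside the blocks), proves $(ii)$ with the same dense set $\{0\}\cup\bigcup_{i,j}(D_S)_{i,j}$ and the same two lower bounds coming from the pairwise disjoint open sets $S^*_{i,j}$, and derives $(iii)$ from $(ii)$. The genuine divergence is in the converse direction of $(iv)$. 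You argue internally: using Hausdorffness (Proposition~\ref{proposition-2.18}) and the closed-set characterization of $k$-spaces, you test an arbitrary set $A$ against compact witnesses --- a convergent sequence to $0$ obtained from a transversal of infinitely many blocks, or a compact set transported from $S$ into a single block via Lemma~\ref{lemma-2.1} --- with the case split justified by your closure description at $0$, which is correct (a finite union of closures is the closure of the finite union, which is exactly what the ``finitely many blocks'' case needs; note the description as stated presupposes $0\notin A$, which is how you actually use it). The paper instead proceeds externally: it forms $\mathscr{A}(\lambda)\times S$, where $\mathscr{A}(\lambda)$ is the one-point compactification of the discrete space $\lambda$ (identifying $\lambda\times\lambda$ with $\lambda$ for infinite $\lambda$), observes that this product is a $k$-space by Theorem~3.3.27 of \cite{Engelking1989}, collapses the closed set $\mathscr{J}=(\{a\}\times S)\cup(\lambda\times\{0_S\})$ to a point, verifies that the resulting equivalence relation $\Omega_{\mathscr{J}}$ is closed (so the quotient is Hausdorff), and invokes Theorem~3.3.23 of \cite{Engelking1989} together with an explicit homeomorphism $\left(B^0_{\lambda}(S),\tau_{B}^{S}\right)\cong\left(\mathscr{A}(\lambda)\times S\right)/\Omega_{\mathscr{J}}$. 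Your route is more elementary and self-contained: it needs no quotient machinery, no verification that $\Omega_{\mathscr{J}}$ is closed, and no cardinal identification $\lambda\times\lambda\cong\lambda$; what the paper's route buys is an explicit model of $B^0_{\lambda}(S)$ as a Hausdorff quotient of a product with a compact space, which is reusable structural information beyond statement $(iv)$. The only points you should make explicit when writing this up are that the closed-set definition of $k$-space you use is equivalent to the paper's definition (quotient of a locally compact space; see Section~3.3 of \cite{Engelking1989}), and that the blockwise homeomorphism you invoke sends $0_S$ to $0$, so that $A\cap K$ really corresponds to $P_{\alpha,\beta}\cap K'$ in your final witness.
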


\begin{proof}
$(i)$ Since $S$ is homeomorphic to the subspace $S_{i,j}$ of $\left(B^0_{\lambda}(S),\tau_{B}^{S}\right)$ for all $i,j\in\lambda$ we conclude that the definition of tightness of a topological space implies the following inequality $t(S)\leqslant t\left(B^0_{\lambda}(S),\tau_{B}^{S}\right)$.

Suppose that $t(S)=\kappa\geqslant\omega$. Then for every open subset $A$ of $S$ and every point $x\in\operatorname{cl}_{S}(A)$ there exists $B\subseteq A$ such that $|B|\leqslant\kappa$ and $x\in\operatorname{cl}_{S}(B)$.

Fix an arbitrary subset $A$ of $\left(B^0_{\lambda}(S),\tau_{B}^{S}\right)$. Let $x$ be an arbitrary point of $\left(B^0_{\lambda}(S),\tau_{B}^{S}\right)$ such that
$x\in\operatorname{cl}_{B^0_{\lambda}(S)}(A)$. We consider two cases:
\begin{itemize}
  \item[$(1)$] $x\in S_{i,j}^*$ for some $i,j\in\lambda$;
  \item[$(2)$] $x$ is zero of the semigroup $B^0_{\lambda}(S)$.
\end{itemize}

Suppose that case $(1)$ holds. By Lemma~\ref{lemma-2.2}, $S_{i,j}^*$ is an open subspace of $\left(B^0_{\lambda}(S),\tau_{B}^{S}\right)$ and since $t(S_{i,j}^*)\leqslant t(S)$ we conclude that there exists a subset $B\subseteq A\cap S_{i,j}^*\subseteq A$ such that $x\in\operatorname{cl}_{B^0_{\lambda}(S)}(B)$ and $|B|\leqslant\kappa$.

Suppose that $x$ is zero of the semigroup $B^0_{\lambda}(S)$ and $x\in\operatorname{cl}_{B^0_{\lambda}(S)}(A)$ for some $A\subseteq B^0_{\lambda}(S)$. If the set $A$ intersects infinitely many sets $S_{i,j}^*$, $i,j\in\lambda$, then  there exists an infinite countable subset $B$ of $B^0_{\lambda}(S)$ such that the following conditions holds:
\begin{itemize}
  \item[$(1)$] either $B\cap S_{i,j}^*=\varnothing$ or $B\cap S_{i,j}^*$ is a singleton for all $i,j\in\lambda$; \; and
  \item[$(2)$] $B\subseteq A$.
\end{itemize}
Then the definition of the topology $\tau_{B}^{S}$ on $B^0_{\lambda}(S)$ implies that $x\in\operatorname{cl}_{B^0_{\lambda}(S)}(B)$. In the other case there exist finitely many subsets $S_{i_1,j_1}^*,\ldots,S_{i_k,j_k}^*$ of $B^0_{\lambda}(S)$ such that $A\subseteq S_{i_1,j_1}^*\cup\cdots\cup S_{i_k,j_k}^*\cup\{x\}$ and $x\in\operatorname{cl}_{S_{i_1,j_1}}\left(A\cap S_{i_1,j_1}^*\right),\ldots, x\in \operatorname{cl}_{S_{i_k,j_k}}\left(A\cap S_{i_k,j_k}^*\right)$. Since $t(S)\leqslant \kappa$ we conclude that there exist $B_{i_1,j_1}\subseteq S_{i_1,j_1}^*, \ldots, B_{i_k,j_k} \subseteq S_{i_k,j_k}^*$ such that $\left|B_{i_1,j_1}\right|\leqslant\kappa,\ldots, \left|B_{i_k,j_k}\right|\leqslant\kappa$ and $x\in\operatorname{cl}_{S_{i_1,j_1}}\left(B_{i_1,j_1}\right)$, $\ldots$, $x\in \operatorname{cl}_{S_{i_k,j_k}}\left(B_{i_k,j_k}\right)$. Then the definition of the topology $\tau_{B}^{S}$ on $B^0_{\lambda}(S)$ implies that $t\left(B^0_{\lambda}(S),\tau_{B}^{S}\right)\leqslant\kappa$.

$(ii)$ Let $A$ be a dense subset of the topological space $S$. Then the definition of the topology $\tau_{B}^{S}$ on $B^0_{\lambda}(S)$ implies that $A_B=\displaystyle\bigcup_{i,j\in\lambda}A_{i,j}$ is a dense subset of the topological space $\left(B^0_{\lambda}(S),\tau_{B}^{S}\right)$, and hence we have that $d\left(B^0_{\lambda}(S),\tau_{B}^{S}\right)\leqslant\max\left\{\lambda, d(S)\right\}$.

By Lemma~\ref{lemma-2.1} we have that the space $S$ is homeomorphic to the subspace $S_{i,j}$ of $\left(B^0_{\lambda}(S),\tau_{B}^{S}\right)$ for all $i,j\in\lambda$, and hence we get that $d(S)=d(S_{i,j})$. If $A$ is a dense subset of the topological space $\left(B^0_{\lambda}(S),\tau_{B}^{S}\right)$ then since by Lemma~\ref{lemma-2.2}, $S_{i,j}^*$ is an open subset of $\left(B^0_{\lambda}(S),\tau_{B}^{S}\right)$ we have that $A\cap S_{i,j}^*$ is a dense subset of $S_{i,j}^*$, and hence we get that $d(S_{i,j}^*)\leqslant d\left(B^0_{\lambda}(S),\tau_{B}^{S}\right)$ for all $i,j\in\lambda$. Then the set $\{0\}\cup(A\cap S_{i,j}^*)$ is dense in $S_{i,j}$, and hence we get that $d(S)=d(S_{i,j})\leqslant d\left(B^0_{\lambda}(S),\tau_{B}^{S}\right)$. Also, since $S_{i,j}^*$ is an open subset of the topological space $\left(B^0_{\lambda}(S),\tau_{B}^{S}\right)$ we conclude that if $A$ is a dense subset of $\left(B^0_{\lambda}(S),\tau_{B}^{S}\right)$ then $\lambda\leqslant|A|$. This implies the inequality $\lambda\leqslant d\left(B^0_{\lambda}(S),\tau_{B}^{S}\right)$ which completes the proof of the statement.

Statement $(iii)$ follows from $(ii)$.

$(iv)$ If $\left(B^0_{\lambda}(S),\tau_{B}^{S}\right)$ is a $k$-space then the definition of the topology $\tau_{B}^{S}$ implies that $S_{i,j}$ is a closed subset of $\left(B^0_{\lambda}(S),\tau_{B}^{S}\right)$ for all $i,j\in\lambda$, and hence by Theorem~3.3.25 from \cite{Engelking1989} and Lemma~\ref{lemma-2.1} we have that $S$ is a $k$-space.

Suppose that $S$ is a $k$-space and $\mathscr{A}(\lambda)$ is the one-point Alexandroff compactification of the infinite discrete space $\lambda$ with the remainder $a$. Then by Theorem~3.3.27 from \cite{Engelking1989} we have that $\mathscr{A}(\lambda)\times S$ with the product topology is a $k$-space. We put
\begin{equation*}
    \mathscr{J}=\{(a,s)\mid s\in S\}\cup\left\{(i,0_S)\mid i\in\lambda \hbox{ and } 0_S \hbox{ is zero of } S\right\}.
\end{equation*}
Then the definition of the space $\mathscr{A}(\lambda)\times S$ implies that $\mathscr{J}$ is a closed subset of  $\mathscr{A}(\lambda)\times S$.

Simple verifications show that the relation $\Omega_{\mathscr{J}}= (\mathscr{J}{\times}\mathscr{J})\cup\Delta_{\mathscr{A}(\lambda)\times S}$, where $\Delta_{\mathscr{A}(\lambda)\times S}$ is the diagonal of $\mathscr{A}(\lambda)\times S$, is an equivalence relation on $\mathscr{A}(\lambda)\times S$.

Next we shall show that $\Omega_{\mathscr{J}}$ is a closed relation on $\mathscr{A}(\lambda)\times S$. Fix an arbitrary $((i,s),(j,t))\in \left(\left(\mathscr{A}(\lambda)\times S\right)\times \left(\mathscr{A}(\lambda)\times S\right)\right)\setminus\Omega_{\mathscr{J}}$. Since $\Omega_{\mathscr{J}}$ is symmetric we need only to consider the following cases:
\begin{itemize}
  \item[$(1)$] $i=a$, $j\neq a$, and $t\neq 0_S$;
  \item[$(2)$] $i=j\neq a$, $s=0_S$, and $t\neq 0_S$;
  \item[$(3)$] $i=j\neq a$ and $0_S\neq s\neq t\neq 0_S$;
  \item[$(4)$] $a\neq i\neq j\neq a$, $s\neq 0_S$, and $t\neq 0_S$.
\end{itemize}

In case $(1)$ the Hausdorffness of $S$ implies that there exists an open neighbourhood $U(t)$ of the point $t$ in $S$ such that $U(t)\not\ni 0_S$. Then the definition of the topological space $\mathscr{A}(\lambda)$ implies that $\left(\mathscr{A}(\lambda)\setminus\{j\}\right)\times S$ and $\{j\}\times U(t)$ are open disjoint neighbourhoods of the points $(i,s)$ and $(j,t)$ in $\mathscr{A}(\lambda)\times S$, respectively, and hence $((i,s),(j,t))$ is an interior point of the set $\left(\left(\mathscr{A}(\lambda)\times S\right)\times \left(\mathscr{A}(\lambda)\times S\right)\right)\setminus\Omega_{\mathscr{J}}$.

In case $(2)$ the Hausdorffness of $S$ implies that there exist an open neighbourhoods $U(s)$ and $U(t)$ of the points $s=0_S$ and $t$ in $S$, respectively, such that $U(s)\cap U(t)=\varnothing$. Since $i=j\neq a$ is an isolated point of the space $\mathscr{A}(\lambda)$ we have that $\{i\}\times U(s)$ and $\{j\}\times U(t)$ are open disjoint neighbourhoods of the points $(i,s)$ and $(j,t)$ in $\mathscr{A}(\lambda)\times S$, respectively, and hence $((i,s),(j,t))$ is an interior point of the set $\left(\left(\mathscr{A}(\lambda)\times S\right)\times \left(\mathscr{A}(\lambda)\times S\right)\right)\setminus\Omega_{\mathscr{J}}$.

In cases $(3)$ and $(4)$ the proof of the statement that $((i,s),(j,t))$ is an interior point of the set $\left(\left(\mathscr{A}(\lambda)\times S\right)\times \left(\mathscr{A}(\lambda)\times S\right)\right)\setminus\Omega_{\mathscr{J}}$ are similar to $(2)$ and $(1)$, respectively.

Therefore we have that $\Omega_{\mathscr{J}}$ is a closed equivalence relation on the topological space $\mathscr{A}(\lambda)\times S$ and hence the quotient space $\left(\mathscr{A}(\lambda)\times S\right)/\Omega_{\mathscr{J}}$ (with the quotient topology) is a Hausdorff space. Thus, the natural map $\pi_{\Omega_{\mathscr{J}}}\colon \mathscr{A}(\lambda)\times S\rightarrow \left(\mathscr{A}(\lambda)\times S\right)/\Omega_{\mathscr{J}}$ is a quotient map and by Theorem~3.3.23 from \cite{Engelking1989} we get that $\left(\mathscr{A}(\lambda)\times S\right)/\Omega_{\mathscr{J}}$ is a $k$-space for every infinite cardinal $\lambda$. Also, we observe that for every finite subset $K_{\texttt{fin}}\subset \mathscr{A}(\lambda)$ such that $a\in K_{\texttt{fin}}$ we have that $\pi_{\Omega_{\mathscr{J}}}(K_{\texttt{fin}}\times S)$ is a $k$-space by Theorem~3.3.23 of \cite{Engelking1989}, because the definition of the space $\mathscr{A}(\lambda)\times S$ implies that $A\times S^*$ is an open subspace of $\mathscr{A}(\lambda)\times S$ for every subset $A\subseteq \mathscr{A}(\lambda) \setminus \{a\}$.

We observe that if $\lambda$ is an infinite cardinal then $|\lambda\times\lambda|= \lambda$ and simple verifications show that the map $f\colon \left(B^0_{\lambda}(S),\tau_{B}^{S}\right) \rightarrow \left(\mathscr{A}(\lambda)\times S\right)/\Omega_{\mathscr{J}}$ defined by the formulae
\begin{equation*}
    f(i,s,j)=(i,j,s)\in\lambda\times\lambda\times S^*, \hbox{ if } s\in S^* \quad \hbox{and} \quad f(0)=\pi_{\Omega_{\mathscr{J}}}(\mathscr{J}\times\mathscr{J})
\end{equation*}
is a homeomorphism in the case when we identify cardinal $\lambda$ with its square $\lambda\times\lambda$. This completes the proof of statement $(iv)$.
\end{proof}

Recall~\cite{DeLeeuwGlicksberg1961} that a \emph{Bohr compactification} of a semitopological semigroup $S$ is a~pair $(b, \mathbb{B}(S))$ such that $\mathbb{B}(S)$ is a compact semitopological semigroup, $b\colon S\to \mathbb{B}(S)$ is a continuous homomorphism, and if $g\colon S\to T$ is a continuous homomorphism of $S$ into a compact semitopological semigroup $T$, then there exists a unique continuous homomorphism $f\colon \mathbb{B}(S)\to T$ such that the diagram
\begin{equation*}
\xymatrix{ S\ar[rr]^b\ar[dr]_g && \mathbb{B}(S)\ar[ld]^f\\
& T &}
\end{equation*}
commutes. In the sequel, similar as in the general topology by the Bohr compactification of a semitopological semigroup $S$ we shall mean not only pair $(b,
\mathbb{B}(S))$ but also the compact semitopological semigroup $\mathbb{B}(S)$.

The definition of the Stone-\v{C}ech compactification implies that for every Tychonoff semitopological monoid $S$ with the Grothendieck property any continuous homomorphism $h\colon S\rightarrow T$ into a compact semitopological semigroup $T$ has the unique extended continuous homomorphism $\beta h\colon\beta S\rightarrow T$. This implies that the Bohr compactification of $S$ in this case coincides with its Stone-\v{C}ech compactification $\beta S$. Hence Proposition~\ref{proposition-2.20} implies the following two theorems.

\begin{theorem}\label{theorem-2.21}
Let $S$ be a Tychonoff pseudocompact semitopological monoid with
zero such that one the following conditions holds:
\begin{itemize}
  \item[$(i)$] $S$ is countably compact;
  \item[$(ii)$] $S$ has the countable tightness;
  \item[$(iii)$] $S$ is a $k$-space.
\end{itemize}
Then for arbitrary cardinal $\lambda\geqslant 1$ and for the Tychonoff pseudocompact topological Brandt $\lambda^0$-extension $\left(B^0_{\lambda}(S),\tau_{B}^S\right)$ of $S$ in the class of semitopological semigroups the Stone-\v{C}ech compactification $\beta\left(B^0_{\lambda}(S),\tau_{B}^S\right)$ is a compact semitopological semigroup and, moreover, the Bohr compactification $\mathbb{B}\left(B^0_{\lambda}(S),\tau_{B}^S\right)$ of $\left(B^0_{\lambda}(S),\tau_{B}^S\right)$ is topologically isomorphic to $\beta\left(B^0_{\lambda}(S),\tau_{B}^S\right)$.
\end{theorem}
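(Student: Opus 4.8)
The plan is to reduce the whole statement to the machinery already assembled just before the theorem, namely Proposition~\ref{proposition-2.15} together with the list of sufficient conditions for the Grothendieck property and the remark that for such spaces the Bohr and Stone--\v{C}ech compactifications coincide. The only genuine work is to verify that $\left(B^0_{\lambda}(S),\tau_{B}^S\right)$ itself is a Tychonoff pseudocompact semitopological semigroup enjoying the Grothendieck property; once this is in hand, both conclusions follow at once.

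First I would record the ambient properties of the extension. Since $S$ is Tychonoff, Proposition~\ref{proposition-2.19}$(ii)$ shows that $\left(B^0_{\lambda}(S),\tau_{B}^S\right)$ is Tychonoff; being Tychonoff it is in particular regular, hence semiregular, so Theorem~\ref{theorem-2.10} applies and guarantees that $\left(B^0_{\lambda}(S),\tau_{B}^S\right)$ is a pseudocompact semitopological semigroup. This supplies the two standing hypotheses --- Tychonoff and pseudocompact --- needed to invoke the Grothendieck-property criteria recalled before Proposition~\ref{proposition-2.15}.

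Next I would treat the three cases in parallel, transferring each hypothesis on $S$ to the corresponding property of the extension. If $S$ is countably compact, Theorem~\ref{theorem-2.7c} yields that $\left(B^0_{\lambda}(S),\tau_{B}^S\right)$ is countably compact. If $t(S)=\omega$, then Proposition~\ref{proposition-2.20}$(i)$ gives $t\left(B^0_{\lambda}(S),\tau_{B}^S\right)=t(S)=\omega$, so the extension has countable tightness. If $S$ is a $k$-space, Proposition~\ref{proposition-2.20}$(iv)$ shows that the extension is a $k$-space as well. In each case the Tychonoff pseudocompact space $\left(B^0_{\lambda}(S),\tau_{B}^S\right)$ satisfies one of the conditions $(i)$, $(ii)$, $(iv)$ of the list preceding Proposition~\ref{proposition-2.15}, and hence has the Grothendieck property. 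I would also remark that separability is deliberately absent from the hypotheses: by Proposition~\ref{proposition-2.20}$(iii)$ the extension is separable only when $\lambda\leqslant\omega$, so separability of $S$ does not transfer for an arbitrary cardinal $\lambda$, which is why condition $(iii)$ of Corollary~\ref{corollary-2.17} does not reappear here.

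Finally I would close the argument. Applying Proposition~\ref{proposition-2.15} to the Tychonoff semitopological semigroup $\left(B^0_{\lambda}(S),\tau_{B}^S\right)$ with the Grothendieck property, we obtain that $\beta\left(B^0_{\lambda}(S),\tau_{B}^S\right)$ is a compact semitopological semigroup. For the Bohr compactification I would invoke the remark preceding the theorem: every continuous homomorphism of $\left(B^0_{\lambda}(S),\tau_{B}^S\right)$ into a compact semitopological semigroup $T$ extends, by the defining property of $\beta$, to a continuous map on $\beta\left(B^0_{\lambda}(S),\tau_{B}^S\right)$, and this extension is again a homomorphism because both multiplications are separately continuous, the original map is multiplicative on the dense subsemigroup, and $T$ is Hausdorff. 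This is precisely the universal property of the Bohr compactification, so by its uniqueness $\mathbb{B}\left(B^0_{\lambda}(S),\tau_{B}^S\right)$ is topologically isomorphic to $\beta\left(B^0_{\lambda}(S),\tau_{B}^S\right)$. The step I expect to carry the most weight is the case analysis establishing the Grothendieck property --- in particular the $k$-space case, which rests on the delicate quotient construction underlying Proposition~\ref{proposition-2.20}$(iv)$.
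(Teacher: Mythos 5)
Your proposal is correct and takes essentially the same approach as the paper: the paper's (implicit) proof is precisely the remark preceding the theorem --- for a Tychonoff pseudocompact space with the Grothendieck property the Bohr compactification coincides with the Stone--\v{C}ech compactification via Proposition~\ref{proposition-2.15} --- combined with Theorem~\ref{theorem-2.7c} and Proposition~\ref{proposition-2.20}$(i)$,$(iv)$ to transfer countable compactness, countable tightness, and the $k$-space property from $S$ to $\left(B^0_{\lambda}(S),\tau_{B}^S\right)$. Your write-up merely makes explicit what the paper leaves to the reader (Tychonoffness of the extension via Proposition~\ref{proposition-2.19}$(ii)$, and the density argument showing that the extension of a continuous homomorphism to $\beta\left(B^0_{\lambda}(S),\tau_{B}^S\right)$ is again a homomorphism), so it matches the intended proof.
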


\begin{theorem}\label{theorem-2.22}
Let $\lambda\geqslant 1$ be a countable cardinal and $S$ be a Tychonoff separable pseudocompact semitopological monoid with zero. Then the Stone-\v{C}ech compactification $\beta\left(B^0_{\lambda}(S),\tau_{B}^S\right)$ of the Tychonoff pseudocompact topological Brandt $\lambda^0$-extension $\left(B^0_{\lambda}(S),\tau_{B}^S\right)$ of $S$ in the class of semitopological semigroups is a compact semitopological semigroup and, moreover, the Bohr compactification $\mathbb{B}\left(B^0_{\lambda}(S),\tau_{B}^S\right)$ of $\left(B^0_{\lambda}(S),\tau_{B}^S\right)$ is topologically isomorphic to $\beta\left(B^0_{\lambda}(S),\tau_{B}^S\right)$.
\end{theorem}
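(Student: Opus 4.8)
The plan is to proceed exactly as in the proof of Theorem~\ref{theorem-2.21}, the only change being that \emph{separability}, rather than countable compactness, countable tightness, or the $k$-space property, is used to produce the Grothendieck property. Concretely, I would first verify that the extension $(B^0_{\lambda}(S),\tau_{B}^S)$ is simultaneously Tychonoff, pseudocompact and separable; from this I would deduce that it has the Grothendieck property; I would then apply Proposition~\ref{proposition-2.15} to conclude that $\beta(B^0_{\lambda}(S),\tau_{B}^S)$ is a compact semitopological semigroup; and finally I would invoke the coincidence of the Bohr and Stone-\v{C}ech compactifications recorded in the remark immediately preceding Theorem~\ref{theorem-2.21}.

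The substance of the argument is the collection of the three topological properties of the extension. Since $S$ is Tychonoff, Proposition~\ref{proposition-2.19}$(ii)$ shows that $(B^0_{\lambda}(S),\tau_{B}^S)$ is Tychonoff, hence regular, hence semiregular. As $S$ is moreover pseudocompact, it is a semiregular pseudocompact monoid, so Theorem~\ref{theorem-2.10} produces the unique semiregular pseudocompact extension $(B^0_{\lambda}(S),\tau_{B}^S)$ in the class of semitopological semigroups; in particular this space is pseudocompact. The decisive step, and the one accounting for the countability restriction on $\lambda$, is separability: because $S$ is separable and $\lambda\leqslant\omega$, Proposition~\ref{proposition-2.20}$(iii)$ guarantees that $(B^0_{\lambda}(S),\tau_{B}^S)$ is separable.

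Being Tychonoff, pseudocompact and separable, the space $(B^0_{\lambda}(S),\tau_{B}^S)$ satisfies condition $(iii)$ in the list of sufficient conditions preceding Proposition~\ref{proposition-2.15}, and therefore has the Grothendieck property. Since $(B^0_{\lambda}(S),\tau_{B}^S)$ is a semitopological semigroup by construction, Proposition~\ref{proposition-2.15} applies and yields that $\beta(B^0_{\lambda}(S),\tau_{B}^S)$ is a compact semitopological semigroup; the remark before Theorem~\ref{theorem-2.21} then identifies $\mathbb{B}(B^0_{\lambda}(S),\tau_{B}^S)$ with $\beta(B^0_{\lambda}(S),\tau_{B}^S)$ up to topological isomorphism, via the Stone-\v{C}ech universal property applied to continuous homomorphisms into compact semitopological semigroups. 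I anticipate no genuine obstacle here, as the proof is purely a chain of already-established results; the single point deserving attention is the separability step, where Proposition~\ref{proposition-2.20}$(iii)$ supplies separability of the extension precisely when $\lambda$ is countable, which is exactly why this statement is separated from Theorem~\ref{theorem-2.21} and why the hypothesis $\lambda\leqslant\omega$ cannot be dropped.
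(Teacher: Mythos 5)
Your proposal is correct and follows exactly the paper's intended argument: the paper proves Theorem~\ref{theorem-2.22} (together with Theorem~\ref{theorem-2.21}) precisely by combining Proposition~\ref{proposition-2.19}$(ii)$ and Theorem~\ref{theorem-2.10} for the Tychonoff pseudocompact extension, Proposition~\ref{proposition-2.20}$(iii)$ for separability (which is where the hypothesis $\lambda\leqslant\omega$ enters), the resulting Grothendieck property, Proposition~\ref{proposition-2.15}, and the identification of the Bohr with the Stone-\v{C}ech compactification stated just before Theorem~\ref{theorem-2.21}.
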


The following theorem is an analogue of the Comfort-Ross Theorem for topological group (see \cite{ComfortRoss1966}).

\begin{theorem}\label{theorem-2.23}
Let $\lambda\geqslant 1$ be any cardinal and $S$ be a normal countably compact semitopological monoid with zero. Then the Stone-\v{C}ech compactification $\beta\left(B^0_{\lambda}(S),\tau_{B}^S\right)$ of a countably compact topological Brandt $\lambda^0$-extension $\left(B^0_{\lambda}(S),\tau_{B}^S\right)$ is topologically isomorphic to the compact topological Brandt $\lambda^0$-extension $\left(B^0_{\lambda}(\beta S),\tau_{B}^{\beta S}\right)$ of $\beta S$.
\end{theorem}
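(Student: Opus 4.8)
The plan is to realize the compact extension $\left(B^0_{\lambda}(\beta S),\tau_{B}^{\beta S}\right)$ as a Hausdorff compactification of $\left(B^0_{\lambda}(S),\tau_{B}^{S}\right)$ and then to recognize it, through the extension property for $[0,1]$-valued functions, as the Stone--\v{C}ech compactification. First I would record the standing hypotheses: since $S$ is normal and Hausdorff it is Tychonoff, and being countably compact it is pseudocompact and has the Grothendieck property by condition~$(i)$ preceding Proposition~\ref{proposition-2.15}. Hence Proposition~\ref{proposition-2.15} applies, so $\beta S$ is a compact semitopological monoid with the same zero $0_S$ and unit $1_S$, and by Theorem~\ref{theorem-2.16} the compact topological Brandt $\lambda^0$-extension $\left(B^0_{\lambda}(\beta S),\tau_{B}^{\beta S}\right)$ contains $\left(B^0_{\lambda}(S),\tau_{B}^{S}\right)$ via the embedding $\beta_B^*$. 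This target is Hausdorff by Proposition~\ref{proposition-2.18} and compact, so it is a Hausdorff compactification of $B^0_{\lambda}(S)$ once I check that $\beta_B^*\!\left(B^0_{\lambda}(S)\right)$ is dense; this is immediate from the base in Theorem~\ref{theorem-2.7c}, since $S$ dense in $\beta S$ forces $S_{\alpha,\beta}^*$ to be dense in $(\beta S)_{\alpha,\beta}^*$, while every basic neighbourhood of the zero contains cofinitely many full blocks $(\beta S)_{\alpha,\beta}$ and so meets $B^0_{\lambda}(S)$.

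The heart of the argument is to show that every continuous $g\colon\left(B^0_{\lambda}(S),\tau_{B}^{S}\right)\to[0,1]$ extends continuously over $B^0_{\lambda}(\beta S)$. For each pair $\alpha,\beta\in\lambda$ the restriction of $g$ to the subspace $S_{\alpha,\beta}$, identified with $S$ via Lemma~\ref{lemma-2.1}, is a continuous map $h_{\alpha,\beta}\colon S\to[0,1]$ with $h_{\alpha,\beta}(0_S)=g(0)$; I would take its Stone--\v{C}ech extension $\overline{h_{\alpha,\beta}}\colon\beta S\to[0,1]$ and define $\widetilde{g}$ to agree with $\overline{h_{\alpha,\beta}}$ on each block $(\beta S)_{\alpha,\beta}$ and to send the zero to $g(0)$. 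Well-definedness at the zero is clear because $\overline{h_{\alpha,\beta}}(0_S)=g(0)$ for every pair, and continuity at the nonzero points follows from Lemma~\ref{lemma-2.2}, since each $(\beta S)_{\alpha,\beta}^*$ is open and $\widetilde{g}$ restricts there to the continuous map $\overline{h_{\alpha,\beta}}$.

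The hard part will be continuity of $\widetilde{g}$ at the zero, and this is exactly where the peculiar topology $\tau_{B}^{\beta S}$ at $0$ from Theorem~\ref{theorem-2.7c} is indispensable. Given $\varepsilon>0$, continuity of $g$ at $0$ yields a basic neighbourhood $U_A(0)$, with $A\subseteq\lambda\times\lambda$ finite, on which $g$ stays within $\varepsilon/2$ of $g(0)$; in particular $h_{\alpha,\beta}(S)$ lies in that interval for every $(\alpha,\beta)\notin A$, so passing to closures in $\beta S$ keeps $\overline{h_{\alpha,\beta}}\!\left(\beta S\right)$ within $\varepsilon$ of $g(0)$ on all cofinitely many blocks simultaneously. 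For the finitely many pairs in $A$ I would use continuity of each $\overline{h_{\gamma,\delta}}$ at $0_S\in\beta S$ to choose a neighbourhood $V(0_S)$ of $0_S$, common to all of them by taking a finite intersection, on which these extensions stay within $\varepsilon$ of $g(0)$; assembling $V(0_S)$ with the cofinite blocks into the basic neighbourhood of Theorem~\ref{theorem-2.7c} for $B^0_{\lambda}(\beta S)$ then confines $\widetilde{g}$ to $(g(0)-\varepsilon,g(0)+\varepsilon)$, which proves continuity.

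Once every $[0,1]$-valued function on $B^0_{\lambda}(S)$ extends over $B^0_{\lambda}(\beta S)$, the standard characterization of the Stone--\v{C}ech compactification (see \cite[Section~3.6]{Engelking1989}) identifies $B^0_{\lambda}(\beta S)$ with $\beta\left(B^0_{\lambda}(S)\right)$, so that the canonical continuous map $\Phi\colon\beta\left(B^0_{\lambda}(S)\right)\to B^0_{\lambda}(\beta S)$ extending the inclusion is a homeomorphism. To see that $\Phi$ is simultaneously an algebraic isomorphism I would invoke Theorem~\ref{theorem-2.21}, by which $\beta\left(B^0_{\lambda}(S)\right)$ is the Bohr compactification of $\left(B^0_{\lambda}(S),\tau_{B}^{S}\right)$; applying the universal property of the Bohr compactification to the continuous homomorphism $\beta_B^*\colon B^0_{\lambda}(S)\to B^0_{\lambda}(\beta S)$ into the compact semitopological semigroup $B^0_{\lambda}(\beta S)$ produces a continuous homomorphism extending $\beta_B^*$, which by uniqueness of continuous extensions from the dense subsemigroup $B^0_{\lambda}(S)$ into the Hausdorff space $B^0_{\lambda}(\beta S)$ must coincide with $\Phi$. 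Hence $\Phi$ is at once a homeomorphism and a homomorphism, i.e.\ a topological isomorphism, which is the assertion of the theorem.
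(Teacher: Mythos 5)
Your proof is correct, but it runs in the opposite direction to the paper's. The paper works from the inside out: by Theorem~\ref{theorem-2.21}, $\beta\left(B^0_{\lambda}(S),\tau_{B}^S\right)$ is a compact semitopological semigroup containing the extension densely; since $S$ is normal, Proposition~\ref{proposition-2.19}~$(iii)$ makes the extension normal, so Corollary~3.6.8 of \cite{Engelking1989} identifies $\operatorname{cl}_{\beta\left(B^0_{\lambda}(S),\tau_{B}^S\right)}(S_{\iota,\kappa})$ with $\beta(S_{\iota,\kappa})=\beta S$ for each block; a separate-continuity argument (multiplying by $(\iota,1_S,\iota)$ and $(\kappa,1_S,\kappa)$ and deriving contradictions) then shows that every point of the remainder lies in one of these block closures and that distinct closures meet only in the zero, so $\beta\left(B^0_{\lambda}(S),\tau_{B}^S\right)$ is algebraically a Brandt $\lambda^0$-extension of $\beta S$, and the uniqueness statement of Theorem~\ref{theorem-2.7c} forces its topology to be $\tau_{B}^{\beta S}$. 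You instead work from the outside in: you exhibit $\left(B^0_{\lambda}(\beta S),\tau_{B}^{\beta S}\right)$ as a Hausdorff compactification of the extension (via Theorem~\ref{theorem-2.16} and a density check) and verify the characterizing extension property of the Stone--\v{C}ech compactification for $[0,1]$-valued functions block by block, the key point being that the basic neighbourhoods of zero in Theorem~\ref{theorem-2.7c} give uniform control of $g$, hence of the block extensions $\overline{h_{\alpha,\beta}}$, on all but finitely many blocks simultaneously; the algebraic part then follows from Theorem~\ref{theorem-2.21} together with uniqueness of continuous extensions from a dense subset into a Hausdorff space. Both arguments lean on Theorem~\ref{theorem-2.21}, but they buy different things: the paper's route exposes the internal structure of the remainder of $\beta\left(B^0_{\lambda}(S),\tau_{B}^S\right)$, whereas yours is self-contained at the level of function extension and---notably---uses normality of $S$ only to conclude that $S$ is Tychonoff. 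Since the appeal to Corollary~3.6.8 of \cite{Engelking1989} is the only place where the paper genuinely needs normality (closed subspaces of Tychonoff spaces need not be $C^*$-embedded), your argument in fact establishes the theorem under the formally weaker hypothesis that $S$ is a Tychonoff countably compact semitopological monoid with zero.
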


\begin{proof}
Theorem~\ref{theorem-2.21} implies that $\beta\left(B^0_{\lambda}(S),\tau_{B}^S\right)$ is a compact semitopological semigroup which contains $\left(B^0_{\lambda}(S),\tau_{B}^S\right)$ as a dense subsemigroup. By Theorem~\ref{theorem-2.7c} we have that $\tau_{B}^S$ is a unique Hausdorff topology on $B^0_{\lambda}(S)$ such that $\left(B^0_{\lambda}(S),\tau_{B}^S\right)$ is a countably compact semitopological semigroup and by Proposition~\ref{proposition-2.19}~$(iii)$ the space $\left(B^0_{\lambda}(S),\tau_{B}^S\right)$ is normal. Now, Corollary~3.6.8 of \cite{Engelking1989} implies that $\operatorname{cl}_{\beta\left(B^0_{\lambda}(S),\tau_{B}^S\right)}(S_{\iota,\kappa})= \beta (S_{\iota,\kappa})=\beta S$ for all $\iota,\kappa\in\lambda$. We observe that the separate continuity of the semigroup operation in $\beta\left(B^0_{\lambda}(S),\tau_{B}^S\right)$ implies that $(\iota,1_S,\iota)\cdot x\cdot (\kappa,1_S,\kappa)=x$ for all $x\in S_{\iota,\kappa}\subset \beta(S_{\iota,\kappa})\subset \beta\left(B^0_{\lambda}(S),\tau_{B}^S\right)$, where $\iota,\kappa\in\lambda$.

Fix an arbitrary $x\in\beta\left(B^0_{\lambda}(S),\tau_{B}^S\right)\setminus B^0_{\lambda}(S)$. Then the Hausdorffness of $\beta\left(B^0_{\lambda}(S),\tau_{B}^S\right)$ and Theorem~\ref{theorem-2.7c} imply that there exists an open neighbourhood $U(x)$ of the point $x$ in $\beta\left(B^0_{\lambda}(S),\tau_{B}^S\right)$ which intersects finitely many sets $S^*_{\iota,\kappa}$, $\iota,\kappa\in\lambda$. Next we shall show that there exist $\iota_x,\kappa_x\in\lambda$ and open neighbourhood $V(x)$ of the point $x$ in $\beta\left(B^0_{\lambda}(S),\tau_{B}^S\right)$ such that $V(x)\cap B^0_{\lambda}(S)\subseteq S^*_{\iota_x,\kappa_x}$. Suppose to the contrary that $V(x)\cap B^0_{\lambda}(S)\nsubseteq S^*_{\iota,\kappa}$ for any open neighbourhood $V(x)$ of the point $x$ in $\beta\left(B^0_{\lambda}(S),\tau_{B}^S\right)$ and any pair of indices  $(\iota,\kappa)$. Then there exists at least one pair $(\gamma,\delta)\neq(\iota,\kappa)$ such that $V(x)\cap S^*_{\gamma,\delta}\neq\varnothing$. Then Lemma~\ref{lemma-2.13} implies that the zero $0$ of $B^0_{\lambda}(S)$ is zero of $\beta\left(B^0_{\lambda}(S),\tau_{B}^S\right)$ and hence $x\neq 0$. If $(\iota,1_S,\iota)\cdot x\cdot(\kappa,1_S,\kappa)=0$, then there exist open neighbourhoods $W(0)$ and $W(x)$ of the points $0$ and $x$ in $\beta\left(B^0_{\lambda}(S),\tau_{B}^S\right)$, respectively, such that the following conditions hold:
\begin{equation*}
    W(0)\cap W(x)=\varnothing, \quad W(x)\subseteq V(x) \quad \hbox{and}\quad
    (\iota,1_S,\iota)\cdot W(x)\cdot(\kappa,1_S,\kappa)\subseteq W(0).
\end{equation*}
But $W(x)\cap S_{\iota,\kappa}\neq\varnothing$ and hence we have that $\left((\iota,1_S,\iota)\cdot W(x)\cdot(\kappa,1_S,\kappa)\right)\cap W(x)\neq\varnothing$, a contradiction. If $(\iota,1_S,\iota)\cdot x\cdot(\kappa,1_S,\kappa)=y\neq 0$, then for every open neighbourhood $W(y)\not\ni 0$ of the point $y$ in $\beta\left(B^0_{\lambda}(S),\tau_{B}^S\right)$ there exists an open neighbourhood $W(x)$ of $x$ in $\beta\left(B^0_{\lambda}(S),\tau_{B}^S\right)$ such that $(\iota,1_S,\iota)\cdot W(x)\cdot(\kappa,1_S,\kappa)\subseteq W(y)$. Since $W(x)\cap S^*_{\gamma,\delta}$ for some $(\gamma,\delta)\neq(\iota,\kappa)$ we get that $0\in(\iota,1_S,\iota)\cdot W(x)\cdot(\kappa,1_S,\kappa)\subseteq W(y)$, a contradiction. The obtained contradictions imply that $x\in \operatorname{cl}_{\beta\left(B^0_{\lambda}(S),\tau_{B}^S\right)}(S_{\iota,\kappa})= \beta (S_{\iota,\kappa})$ for some $\iota,\kappa\in\lambda$.

The separate continuity of the semigroup operation in $\beta\left(B^0_{\lambda}(S),\tau_{B}^S\right)$ and Lemma~\ref{lemma-2.13} imply that \begin{equation*}
    \beta(S_{\iota,\kappa})\cap\beta(S_{\gamma,\delta})=
    \left\{
      \begin{array}{ll}
        \beta(S_{\iota,\kappa}), & \hbox{if~} (\iota,\kappa)=(\gamma,\delta)\\
        \{0\}, & \hbox{if~} (\iota,\kappa)\neq(\gamma,\delta),
      \end{array}
    \right.
\end{equation*}
and $(\iota,1_S,\iota)\cdot x\cdot(\kappa,1_S,\kappa)=x$ for any $x\in\beta(S_{\iota,\kappa})$ and all $\iota,\kappa,\gamma,\delta\in\lambda$. Next, we suppose that the map $f_{\iota,\kappa}\colon\beta S\rightarrow \beta(S_{\iota,\kappa})$ identifies the compact spaces $\beta S$ and $\beta(S_{\iota,\kappa})$ such that $f_{\iota,\kappa}(1_S)=(\iota,1_S,\kappa)\in \beta(S_{\iota,\kappa})$ and $f_{\iota,\kappa}(0_S)=0\in \beta(S_{\iota,\kappa})\subset \beta\left(B^0_{\lambda}(S),\tau_{B}^S\right)$. Then the map $f\colon B^0_{\lambda}(\beta S)\rightarrow \beta\left(B^0_{\lambda}(S),\tau_{B}^S\right)$ defined by the formula
\begin{equation*}
    f(x)=
\left\{
  \begin{array}{ll}
    f_{\iota,\kappa}(x), & \hbox{if~} x\in(\beta S)_{\iota,\kappa}; \\
    0, & \hbox{if~} x=0,
  \end{array}
\right.
\end{equation*}
is an algebraic isomorphism. Therefore, $\beta\left(B^0_{\lambda}(S),\tau_{B}^S\right)$ is a compact Brandt $\lambda^0$-extension of $\beta S$. Theorem~\ref{theorem-2.7c} completes the proof.
\end{proof}

\begin{remark}\label{remark-2.24}
We observe that by Theorem~3.10.21 from \cite{Engelking1989} every normal pseudocompact topological space is countably compact and hence Theorem~\ref{theorem-2.23} holds in the case of pseudocompact topological Brandt $\lambda^0$-extensions of normal semitopological semigroups.
\end{remark}

\section{On categories of pseudocompact topological Brandt
$\lambda^0$-extensions of semitopological semigroups}

\begin{definition}[\cite{GutikRepovs2010}]\label{definition-3.1}
Let $\lambda$ be any cardinal $\geqslant 2$. We shall say that a semigroup $S$ has the \emph{$\mathcal{B}_{\lambda}^*$-property} if $S$ satisfies the following conditions:
\begin{itemize}
    \item[1)] $S$ does not contain the semigroup of $\lambda\times\lambda$-matrix units;
    \item[2)] $S$ does not contain the semigroup of $2\times 2$-matrix units $B_2$ such that the zero of $B_2$ is the zero of $S$.
\end{itemize}
\end{definition}

\begin{theorem}\label{theorem-3.2}
Let $\lambda_1$ and $\lambda_2$ be any cardinals such that
$\lambda_2\geqslant\lambda_1\geqslant 1$. Let $B_{\lambda_1}^0(S)$
and $B_{\lambda_2}^0(T)$ be topological Brandt $\lambda_1^0$- and
$\lambda_2^0$-extensions of semiregular pseudocompact
semitopological monoids $S$ and $T$ with zero, respectively. Let
$h\colon S\rightarrow T$ be a continuous homomorphism such that
$h(0_S)=0_T$ and $\varphi\colon {\lambda_1}\rightarrow{\lambda_2}$
an one-to-one map. Let $e$ be a non-zero idempotent of $T$, $H_e$
a maximal subgroup of $T$ with unit $e$ and
$u\colon{\lambda_1}\rightarrow H_e$ a map. Then $I_h=\{ s\in S\mid
h(s)=0_T\}$ is a closed ideal of $S$ and the map $\sigma\colon
B_{\lambda_1}^0(S)\rightarrow B_{\lambda_2}^0(T)$ defined by the
formulae
\begin{equation*}
    \sigma((\alpha,s,\beta))=
    \left\{%
\begin{array}{cl}
    (\varphi(\alpha),u(\alpha)\cdot h(s)\cdot(u(\beta))^{-1},\varphi(\beta)),
    & \hbox{if}\quad s\in S\setminus I_h ;\\
    0_2, & \hbox{if}\quad s\in I_h^*,\\
\end{array}%
\right.
\end{equation*}
and $(0_1)\sigma=0_2$, is a non-trivial continuous homomorphism from $B_{\lambda_1}^0(S)$ into $B_{\lambda_2}^0(T)$, where $0_1$ and $0_2$ are zeros of $B_{\lambda_1}^0(S)$ and $B_{\lambda_2}^0(T)$, respectively. Moreover, if for the semigroup $T$ the following conditions hold:
\begin{itemize}
    \item[($i$)] every idempotent of $T$ lies in the center of $T$;
    \item[($ii$)] $T$ has $\mathcal{B}_{\lambda_1}^*$-property,
\end{itemize}
then every non-trivial continuous homomorphism from
$B_{\lambda_1}^0(S)$ into $B_{\lambda_2}^0(T)$ can be constructed
in this manner.
\end{theorem}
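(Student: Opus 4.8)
The plan is to prove the two directions separately: the forward \emph{construction} claim, that $\sigma$ is a well-defined non-trivial continuous homomorphism and that $I_h$ is a closed ideal, and the backward \emph{classification} claim, that under hypotheses $(i)$ and $(ii)$ every non-trivial continuous homomorphism arises this way. Throughout I treat the purely algebraic content as known: both the verification that $\sigma$ respects multiplication and the structural description underlying the classification follow from Proposition~3.2 of \cite{GutikRepovs2010}, exactly as the algebraic part of Proposition~\ref{proposition-2.3} does. Consequently the genuinely new work is topological---namely closedness of $I_h$ and the continuity of the maps involved.

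For the construction I would first record that $I_h=h^{-1}(0_T)$ is closed, since $h$ is continuous and $\{0_T\}$ is closed in the Hausdorff semigroup $T$, and that it is an ideal, since $0_T$ is the zero of $T$ and $h$ a homomorphism, so $h(as)=h(a)\cdot 0_T=0_T$ and $h(sa)=0_T$ whenever $s\in I_h$. Next I would confirm $\sigma$ is a homomorphism by the short case analysis on the product $(\alpha,s,\beta)\cdot(\gamma,t,\delta)$ in $B_{\lambda_1}^0(S)$, following \cite{GutikRepovs2010}: when $\beta=\gamma$ the inner factors collapse because $(u(\beta))^{-1}u(\beta)=e$ acts as the identity on $h(S)$, yielding $u(\alpha)\cdot h(st)\cdot(u(\delta))^{-1}$; when $\beta\neq\gamma$ the injectivity of $\varphi$ gives $\varphi(\beta)\neq\varphi(\gamma)$, so both sides equal $0_2$; and the cases with argument in $I_h^*$ collapse to $0_2$ on both sides since $0_T$ is the zero of $T$. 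Non-triviality then follows whenever $h$ is non-trivial, because left and right translation by the group units $u(\alpha),u(\beta)^{-1}\in H_e$ preserve non-zero elements. The heart of this direction is continuity of $\sigma$. At a non-zero point $(\alpha,s,\beta)$ I would use the base of Theorem~\ref{theorem-2.10}$(i)$ together with the separate continuity of $T$: on $S^*_{\alpha,\beta}$ the map is $s\mapsto(\varphi(\alpha),u(\alpha)\cdot h(s)\cdot(u(\beta))^{-1},\varphi(\beta))$, a composition of $h$ with fixed left and right translations, each continuous. At the zero $0_1$ I would use the base $\mathscr{B}_B(0)$ of Theorem~\ref{theorem-2.10}$(ii)$: a basic neighbourhood $U_A(0_2)$ omits only the finitely many blocks indexed by a finite $A\subseteq\lambda_2\times\lambda_2$ and prescribes $U(0_T)$ on the remaining ones. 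Because $\varphi$ is one-to-one, the set of pairs $(\alpha,\beta)$ whose image meets $A$ is finite, and continuity of $h$ at $0_S$ supplies $V(0_S)$ with $h(V(0_S))\subseteq U(0_T)$, so the corresponding basic neighbourhood of $0_1$ maps into $U_A(0_2)$.

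For the classification, given a non-trivial continuous homomorphism $\Phi\colon B_{\lambda_1}^0(S)\to B_{\lambda_2}^0(T)$, I would invoke Proposition~3.2 of \cite{GutikRepovs2010}: hypotheses $(i)$, that every idempotent of $T$ is central, and $(ii)$, the $\mathcal{B}_{\lambda_1}^*$-property, force $\Phi$ to have precisely the algebraic shape of $\sigma$, producing the data $h$, $\varphi$, $e$ and $u$. It then remains only to check that the extracted $h$ is continuous. Here I would recover $h$ as the restriction of $\Phi$ to a diagonal block $S_{\alpha,\alpha}$, followed by the block identifications and fixed group translations furnished by Proposition~\ref{proposition-2.3}; each of these maps is continuous, so $h$ is continuous, and $\Phi$ coincides with the $\sigma$ built from this data.

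The main obstacle I anticipate is the continuity of $\sigma$ at $0_1$: it is exactly there that the cofinite-block structure of the topology at zero in Theorem~\ref{theorem-2.10}$(ii)$ must be reconciled with the homomorphism, and the injectivity of $\varphi$ is essential to keep the exceptional index set finite. A secondary delicate point is the bookkeeping of the $I_h$-cases in the homomorphism check, ensuring that a product landing in $I_h$ is sent to $0_2$ consistently by both sides.
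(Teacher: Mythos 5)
Your strategy is the same as the paper's---the algebraic content is deferred to \cite{GutikRepovs2010}, $I_h=h^{-1}(0_T)$ is closed by continuity of $h$, continuity of $\sigma$ is checked pointwise against the base of Theorem~\ref{theorem-2.10}, and in the converse direction $h$ is recovered by restricting the given homomorphism to a block and composing with block identifications and translations---but your argument for continuity at $0_1$, which you yourself call the heart of the matter, has a genuine gap. You choose $V(0_S)$ only so that $h(V(0_S))\subseteq U(0_T)$. That is not enough: for a pair $(\alpha,\beta)$ with $(\varphi(\alpha),\varphi(\beta))\in A$, the middle coordinate of $\sigma((\alpha,s,\beta))$ is $u(\alpha)\cdot h(s)\cdot (u(\beta))^{-1}$, not $h(s)$, and in general $u(\alpha)\cdot U(0_T)\cdot (u(\beta))^{-1}\nsubseteq U(0_T)$; for instance, take $T$ to be the closed complex unit disc under multiplication (a compact, hence semiregular pseudocompact, topological monoid with zero), $u(\alpha)=i$ in the circle group $H_1$, and $U(0_T)$ a neighbourhood of $0$ that is not rotation-invariant. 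So with your $V(0_S)$ the image of the basic neighbourhood of $0_1$ need not lie in $U_A(0_2)$ on the exceptional blocks. The paper's proof does precisely what is missing here: since each map $t\mapsto u(\alpha)\cdot t\cdot(u(\beta))^{-1}$ is continuous (separate continuity in $T$) and fixes $0_T$ (it is the zero of $T$), for each of the finitely many pairs $(\alpha,\beta)$ whose $\varphi$-image lies in $A$ one chooses $V^{\alpha,\beta}(0_S)$ with $u(\alpha)\cdot h\big(V^{\alpha,\beta}(0_S)\big)\cdot(u(\beta))^{-1}\subseteq U(0_T)$, and then takes the finite intersection $V(0_S)=\bigcap_{(\alpha,\beta)}V^{\alpha,\beta}(0_S)$. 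The injectivity of $\varphi$, which you do invoke, is exactly what keeps this intersection finite, so the repair uses only tools you already have; note that at non-zero points your composition argument correctly carries the fixed translations along, and it is only at zero that you drop them.

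Two lesser points. Your description of $U_A(0_2)$ is reversed: it contains \emph{entirely} every block whose index is \emph{not} in $A$, and it is cut down to $(U(0_T))_{\gamma,\delta}$ exactly on the finitely many blocks indexed by $A$; your subsequent finiteness argument is consistent with the correct reading, so this is a wording slip rather than an error. Also, the algebraic black box needed here is Theorem~3.10 of \cite{GutikRepovs2010} (the classification of homomorphisms of Brandt $\lambda^0$-extensions under conditions $(i)$ and $(ii)$), not Proposition~3.2 of that paper, which concerns homomorphic images and is what Proposition~\ref{proposition-2.3} uses; the black-box use is the same as in the paper's proof, but the attribution should be corrected.
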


\begin{proof}
The algebraic part of the proof follows from Theorem~3.10 of \cite{GutikRepovs2010}.

Since the homomorphism $h$ is continuous, $I_h=h^{-1}(0_T)$ is a closed ideal of the semitopological semigroup $S$.

Our next goal is to show that the homomorphism $\sigma$ is continuous provides $h$ is continuous. We consider the following cases:
\begin{itemize}
    \item[$(i)$] $\sigma(0_1)=0_2$;
    \item[$(ii)$] $\sigma((\alpha, s,\beta))=0_2$, i.e., $s\in I_h^*$; and
    \item[$(iii)$] $\sigma((\alpha, s,\beta))=
    (\varphi(\alpha),u(\alpha)\cdot h(s)\cdot(u(\beta))^{-1},\varphi(\beta))$,
\end{itemize}
where $(\alpha, s,\beta)$ is any non-zero element of the semigroup $B_{\lambda_1}^0(S)$.

Without loss of generality we may assume that $\varphi\colon\lambda_1\rightarrow \lambda_2$ is a bijection. Moreover, to simplify of the proof we can assume that
$(\alpha)\varphi=\alpha$ for all $\alpha\in\lambda_1$.

We begin from case $(i)$. Let $U_{A}(0_2)=\bigcup_{(\alpha,\beta)\in(\lambda_2\times\lambda_2) \setminus A}S_{\alpha,\beta}\cup \bigcup_{(\gamma,\delta)\in A} (U(0_T))_{\gamma,\delta}$ be any open basic neighbourhood of the zero $0_2$ in $B_{\lambda_2}^0(T)$. Since left and right translations in $T$ and the homomorphism $h\colon S\rightarrow T$ are continuous maps, we have that for any $u(\alpha),(u(\beta))^{-1}\in B_{\lambda_1}^0(S)$ there exists an open neighbourhood $V^{\alpha,\beta}(0_S)$ in $S$ such that $u(\alpha)\cdot h(V^{\alpha,\beta}(0_S))\cdot (u(\beta))^{-1}\subseteq U(0_T)$. Put $V(0_S)=\bigcap_{(\alpha,\beta)\in A} V^{\alpha,\beta}(0_S)$ and
$V_A(0_1)=\bigcup_{(\alpha,\beta)\in(\lambda_1\times\lambda_1) \setminus A}S_{\alpha,\beta}\cup \bigcup_{(\gamma,\delta)\in A} (V(0_S))_{\gamma,\delta}$. Then we get that $\sigma(V_A(0_1))\subseteq U_A(0_2)$.

In case $(ii)$ we have that $h(s)=0_T$. Let $U_{A}(0_2)=\bigcup_{(\alpha,\beta)\in(\lambda_2\times\lambda_2) \setminus A}S_{\alpha,\beta}\cup \bigcup_{(\gamma,\delta)\in A} (U(0_T))_{\gamma,\delta}$ be any basic open neighbourhood of the zero $0_2$ in $B_{\lambda_2}^0(T)$. Since left and right
translations in $T$ and the homomorphism $h\colon S\rightarrow T$
are continuous maps, we conclude that for the open neighbourhood $U(0_T)$ of the
zero $0_T$ in $T$ there exists an open neighbourhood $V(s)$ of the point $s$ in $S$
such that $u(\alpha)\cdot h(V(s))\cdot (u(\beta))^{-1}\subseteq U(0_T)$. Therefore we have that $\sigma((V(s))_{\alpha,\beta})\subseteq U_A(0_2)$.

Next we consider case $(iii)$. Let $U_{\alpha,\beta}$ be any basic open neighbourhood of the point $\sigma((\alpha, s,\beta))= (\alpha,(\alpha)u\cdot(s)h\cdot((\beta)u)^{-1},\beta)$ in the topological space $B_{\lambda_2}^0(T)$. Since left and right translations in the
semigroup $T$ and the homomorphism $h\colon S\rightarrow T$ are continuous maps, we conclude that there exists an open neighbourhood $V(s)$ of the point $s$ in $S$ such that $u(\alpha)\cdot h(V(s))\cdot (u(\beta))^{-1}\subseteq U$ and hence we get that
$\sigma((V(s))_{\alpha,\beta})\subseteq U_{\alpha,\beta}$.

Since left and right translations in the topological semigroup $B_{\lambda_2}^0(T)$ are continuous and any restriction of a continuous map is continuous, we conclude that the continuity of the homomorphism $\sigma\colon B_{\lambda_1}^0(S)\rightarrow
B_{\lambda_2}^0(T)$ implies the continuity of $h$.
\end{proof}

Next we define a category of pairs of semiregular pseudocompact semitopological monoids with zero and sets, and a category of semiregular pseudocompact semitopological semigroups.

Let $S$ and $T$ be semiregular pseudocompact semitopological monoids with zeros. Let $\texttt{CHom}\,_0(S,T)$ be a set of all continuous homomorphisms $\sigma\colon S\rightarrow T$ such that $\sigma(0_S)=0_T$. We put
\begin{equation*}
    \mathbf{E}^{\textit{top}}_1(S,T)=\{e\in E(T)\mid \hbox{there exists~} \sigma\in
    \texttt{CHom}\,_0(S,T) \hbox{~such that~} \sigma(1_S)=e\}
\end{equation*}
and define the family
\begin{equation*}
    \mathscr{H}^{\textit{top}}_1(S,T)=\{ H(e)\mid
    e\in\mathbf{E}^{\textit{top}}_1(S,T)\},
\end{equation*}
where by $H(e)$ we denote the maximal subgroup with the unit $e$ in the semigroup $T$. Also by $\mathfrak{PCTB}$ we denote the class of all semiregular pseudocompact semitopological monoids $S$ with zero such that $S$ has $\mathcal{B}^*$-property and every idempotent of
$S$ lies in the center of $S$.

We define a category $\mathscr{T\!PC\!B}$ as
follows:
\begin{itemize}
    \item[$(i)$] $\operatorname{\textbf{Ob}}(\mathscr{T\!PC\!B})=\{(S,\lambda)\mid S\in\mathfrak{PCTB} \textrm{~and~} \lambda\textrm{~is a nonzero cardinal}\}$, and if $S$ is a trivial semigroup then we identify $(S,\lambda_1)$ and $(S,\lambda_2)$ for all nonzero cardinals $\lambda_1$ and $\lambda_2$;
    \item[$(ii)$] $\operatorname{\textbf{Mor}}(\mathscr{T\!PC\!B})$ consists of triples $(h,u,\varphi)\colon (S,\lambda)\rightarrow(S^{\,\prime},\lambda^{\,\prime})$, where
\begin{equation*}
\begin{split}
    & h\colon S\rightarrow S^{\,\prime} \textrm{~is a continuous homomorphism such that~} h\in \texttt{CHom}\,_0(S,S^{\,\prime}),\\
    & u\colon \lambda\rightarrow H(e) \textrm{~is a map}, \textrm{~for~} H(e)\in
      \mathscr{H}^{\textit{top}}_1(S,S^{\,\prime}),\\
    & \varphi\colon \lambda\rightarrow \lambda^{\,\prime} \textrm{~is an one-to-one map},
\end{split}
\end{equation*}
with the composition
\begin{equation*}\label{eq3-2}
    (h,u,\varphi)(h^{\,\prime},u^{\,\prime},\varphi^{\,\prime})=
    (hh^{\,\prime},[u,\varphi,h^{\,\prime},u^{\,\prime}],\varphi\varphi^{\,\prime}),
\end{equation*}
where the map $[u,\varphi,h^{\,\prime},u^{\,\prime}]\colon
\lambda\rightarrow H(e)$ is defined by the formula
\begin{equation*}
[u,\varphi,h^{\,\prime},u^{\,\prime}](\alpha)=
    u^{\,\prime}(\varphi(\alpha))\cdot h^{\,\prime}(u(\alpha))
    \qquad \textrm{~for~} \alpha\in\lambda.
\end{equation*}
\end{itemize}
Straightforward verification shows that $\mathscr{T\!PC\!B}$ is the category with the
identity morphism $\varepsilon_{(S,\lambda)}=(\operatorname{Id}_S,u_0,\operatorname{Id}_\lambda)$
for any $(S,\lambda)\in\operatorname{\textbf{Ob}}(\mathscr{T\!PC\!B})$, where $\operatorname{Id}_S\colon S\rightarrow S$ and $\operatorname{Id}_\lambda\colon \lambda\rightarrow \lambda$ are identity maps and $u_0(\alpha)=1_S$ for all $\alpha\in\lambda$.

We define a category $\mathscr{B}^*(\mathscr{T\!PC\!S})$ as follows:
\begin{itemize}
    \item[$(i)$]
    let
    $\operatorname{\textbf{Ob}}(\mathscr{B}^*(\mathscr{T\!PC\!S}))$ be all semiregular pseudocompact topological Brandt $\lambda^0$-extensions of semiregular pseudocompact semitopological monoids $S$ with zero in the class of semitopological semigroups such that $S$ has $\mathcal{B}^*$-property and every idempotent of $S$ lies in the center of $S$;
    \item[$(ii)$] let $\operatorname{\textbf{Mor}}(\mathscr{B}^*(\mathscr{T\!PC\!S}))$ be all homomorphisms of semiregular pseudocompact topological Brandt $\lambda^0$-ex\-ten\-sions of semiregular pseudocompact semitopological monoids $S$ with zero such that $S$ has $\mathcal{B}^*$-property and every idempotent of $S$ lies in the center of $S$.
\end{itemize}

For each $(S,\lambda_1)\in\operatorname{\textbf{Ob}}(\mathscr{T\!PC\!B})$
with non-trivial $S$, let $\textbf{B}(S,{\lambda_1})=B_{\lambda_1}^0(S)$ be the semiregular pseudocompact topological Brandt $\lambda^0_1$-extension of the semiregular pseudocompact semitopological monoid $S$ in the class of semitopological semigroups. For each
$(h,u,\varphi)\in\operatorname{\textbf{Mor}}(\mathscr{T\!PC\!B})$ with a non-trivial continuous homomorphism $h$, where $(h,u,\varphi)\colon
(S,{\lambda_1})\rightarrow(T,{\lambda_2})$ and
$(T,{\lambda_2})\in\operatorname{\textbf{Ob}}(\mathscr{T\!PC\!B})$, we define a map $\textbf{B}{(h,u,\varphi)}\colon \textbf{B}(S,{\lambda_1})=B_{\lambda_1}^0(S)\rightarrow
\textbf{B}(T,{\lambda_2})=B_{\lambda_2}^0(T)$ in the following way:
\begin{equation*}\label{functor_B}
    [\textbf{B}{(h,u,\varphi)}]((\alpha,s,\beta))=
    \left\{%
\begin{array}{cl}
    (\varphi(\alpha),u(\alpha)\cdot h(s)\cdot(u(\beta))^{-1},\varphi(\beta)),
    & \hbox{if}\quad s\in S\setminus I_h ;\\
    0_2, & \hbox{if}\quad s\in I_h^*,\\
\end{array}%
\right.
\end{equation*}
and $[\textbf{B}{(h,u,\varphi)}](0_1)=0_2$, where $I_h=\{ s\in S\mid h(s)=0_T\}$ is a closed ideal of $S$ and $0_1$ and $0_2$ are the zeros of the semigroups $B_{\lambda_1}^0(S)$ and
$B_{\lambda_2}^0(T)$, respectively. For each $(h,u,\varphi)\in\operatorname{\textbf{Mor}}(\mathscr{T\!PC\!B})$ with a trivial homomorphism $h$ we define a map $\textbf{B}{(h,u,\varphi)}\colon
\textbf{B}(S,{\lambda_1})=B_{\lambda_1}^0(S)\rightarrow \textbf{B}(T,{\lambda_2})=B_{\lambda_2}^0(T)$ as follows:
$[\textbf{B}{(h,u,\varphi)}](a)=0_2$ for all $a\in\textbf{B}(S,I_{\lambda_1})=B_{\lambda_1}^0(S)$. If $S$ is a trivial semigroup then we define $\textbf{B}(S,{\lambda_1})$ to be a trivial semigroup.

A functor $\textbf{F}$ from a category $\mathscr{C}$ into a
category $\mathscr{K}$ is called \emph{full} if for any
$a,b\in\operatorname{\textbf{Ob}}(\mathscr{C})$ and for any
$\mathscr{K}$-morphism $\alpha\colon \textbf{F}a\rightarrow
\textbf{F}b$ there exists a $\mathscr{C}$-morphism $\beta\colon
a\rightarrow b$ such that $\textbf{F}\beta=\alpha$, and
$\textbf{F}$ called \emph{representative} if for any
$a\in\operatorname{\textbf{Ob}}(\mathscr{K})$ there exists
$b\in\operatorname{\textbf{Ob}}(\mathscr{C})$ such that $a$ and
$\textbf{F}b$ are isomorphic \cite{Petrich1984}.

Theorem~4.1 \cite{GutikRepovs2010} and Theorem~\ref{theorem-3.2} imply

\begin{theorem}\label{theorem-3.3}
$\operatorname{\textbf{B}}$ is a full representative functor from $\mathscr{T\!PC\!B}$ into
$\mathscr{B}^*(\mathscr{T\!PC\!S})$.
\end{theorem}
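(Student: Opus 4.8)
The plan is to verify in turn the three requirements: that $\operatorname{\textbf{B}}$ is a functor, that it is full, and that it is representative. The algebraic backbone—that the object and morphism assignments respect identities and composition at the level of abstract semigroups—is supplied by Theorem~4.1 of \cite{GutikRepovs2010}, so the genuinely new work lies in checking that every map produced by $\operatorname{\textbf{B}}$ is continuous and that every $\mathscr{B}^*(\mathscr{T\!PC\!S})$-morphism arises topologically from a $\mathscr{T\!PC\!B}$-morphism; both of these are governed by Theorem~\ref{theorem-3.2}.

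First I would confirm that $\operatorname{\textbf{B}}$ is a well-defined functor. On objects, $(S,\lambda)\in\operatorname{\textbf{Ob}}(\mathscr{T\!PC\!B})$ means $S\in\mathfrak{PCTB}$, so by Theorem~\ref{theorem-2.10} there is a unique semiregular pseudocompact topological Brandt $\lambda^0$-extension $\left(B^0_{\lambda}(S),\tau_{B}^S\right)$, and this is exactly an object of $\mathscr{B}^*(\mathscr{T\!PC\!S})$ since $S$ retains the $\mathcal{B}^*$-property and the centrality of idempotents. On morphisms, Theorem~\ref{theorem-3.2} shows that for $(h,u,\varphi)\in\operatorname{\textbf{Mor}}(\mathscr{T\!PC\!B})$ with non-trivial $h$ the map $\operatorname{\textbf{B}}(h,u,\varphi)$ coincides with the continuous homomorphism $\sigma$ of that theorem, while for trivial $h$ the annihilating map is trivially continuous. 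Preservation of the identity is a direct substitution: with $h=\operatorname{Id}_S$, $u_0(\alpha)=1_S$ and $\varphi=\operatorname{Id}_\lambda$ one obtains $(\alpha,s,\beta)\mapsto(\alpha,s,\beta)$. Preservation of composition is the purely algebraic identity recorded in \cite{GutikRepovs2010}, the continuity of each factor being already established; here one only needs to match the categorical composition rule $(h,u,\varphi)(h',u',\varphi')=(hh',[u,\varphi,h',u'],\varphi\varphi')$ against the composite of the two constructed homomorphisms.

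Next I would prove fullness. Fix objects $(S,\lambda_1)$ and $(T,\lambda_2)$ and let $\Sigma\colon B^0_{\lambda_1}(S)\rightarrow B^0_{\lambda_2}(T)$ be an arbitrary $\mathscr{B}^*(\mathscr{T\!PC\!S})$-morphism, that is, a continuous homomorphism. If $\Sigma$ is annihilating it is $\operatorname{\textbf{B}}$ of a morphism with trivial $h$. Otherwise, since $T\in\mathfrak{PCTB}$ every idempotent of $T$ is central and $T$ has the $\mathcal{B}^*$-property, hence in particular the $\mathcal{B}_{\lambda_1}^*$-property of Definition~\ref{definition-3.1}; the \emph{moreover} clause of Theorem~\ref{theorem-3.2} then guarantees that $\Sigma$ is realized as $\sigma$ for some continuous $h\in\texttt{CHom}\,_0(S,T)$, some one-to-one $\varphi\colon\lambda_1\rightarrow\lambda_2$ and some $u\colon\lambda_1\rightarrow H_e$. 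By construction $\operatorname{\textbf{B}}(h,u,\varphi)=\sigma=\Sigma$. The step that requires care is confirming that the extracted triple is genuinely an arrow of $\mathscr{T\!PC\!B}$: one must check that the idempotent $e$ carrying the subgroup $H_e$ lies in $\mathbf{E}^{\textit{top}}_1(S,T)$, so that $H_e=H(e)\in\mathscr{H}^{\textit{top}}_1(S,T)$, which holds because $e=h(1_S)$ for the homomorphism $h$ just produced.

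Finally, representativeness is almost immediate. By definition every object of $\mathscr{B}^*(\mathscr{T\!PC\!S})$ is a semiregular pseudocompact topological Brandt $\lambda^0$-extension of a semiregular pseudocompact semitopological monoid $S$ with zero having the $\mathcal{B}^*$-property and central idempotents; such an $S$ lies in $\mathfrak{PCTB}$, so $(S,\lambda)\in\operatorname{\textbf{Ob}}(\mathscr{T\!PC\!B})$, and the uniqueness in Theorem~\ref{theorem-2.10} forces its topology to be $\tau_B^S$. Hence the given object equals $\operatorname{\textbf{B}}(S,\lambda)$ on the nose, up to the agreed identification of trivial semigroups, and is in particular isomorphic to an object in the image of $\operatorname{\textbf{B}}$. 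I expect the main obstacle to be the fullness argument, precisely the bookkeeping that turns the existential data of Theorem~\ref{theorem-3.2} into a bona fide $\mathscr{T\!PC\!B}$-morphism with the correct target subgroup and injective index map.
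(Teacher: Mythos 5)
Your proposal is correct and follows essentially the same route as the paper, which disposes of the theorem by citing Theorem~4.1 of \cite{GutikRepovs2010} (the algebraic/categorical backbone) together with Theorem~\ref{theorem-3.2} (continuity of the constructed homomorphisms and, via its \emph{moreover} clause, fullness), exactly the ingredients you invoke. Your additional appeal to the uniqueness statement of Theorem~\ref{theorem-2.10} for representativeness is precisely the detail the paper leaves implicit, so your write-up is a faithful expansion of the paper's one-line argument.
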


\begin{remark}
We observe that the statements similar to Theorem~\ref{theorem-3.3} hold for the categories of semiregular countably pracompact (resp., Hausdorff countably compact,
Hausdorff sequentially compact, Hausdorff compact) semitopological monoids and nonempty sets and corresponding to them semiregular countably pracompact (resp., Hausdorff countably compact, Hausdorff sequentially compact, Hausdorff compact) topological Brandt $\lambda^0$-extensions of semiregular countably pracompact (resp., Hausdorff countably compact, Hausdorff sequentially compact, Hausdorff compact) semitopological  monoids in the class of semitopological semigroups. Moreover in the case of semiregular pseudocompact (resp., semiregular countably pracompact, Hausdorff countably compact, Hausdorff sequentially compact, Hausdorff compact) semitopological semilattices the functor $\operatorname{\textbf{B}}$ determines the equivalency of such categories and corresponding to them categories of semiregular pseudocompact (resp., semiregular countably pracompact, Hausdorff countably compact, Hausdorff sequentially compact, Hausdorff compact) topological Brandt $\lambda^0$-extensions of countably compact (resp., semiregular countably pracompact, Hausdorff countably compact, Hausdorff sequentially compact, Hausdorff compact) semitopological
semilattices with zero in the class of semitopological semigroups. The last assertion follows from Proposition~4.3~\cite{GutikRepovs2010}.
\end{remark}

Comfort and Ross \cite{ComfortRoss1966} proved that the Stone-\v{C}ech compactification of a pseudocompact topological group is a topological group. Therefore the functor of the Stone-\v{C}ech compactification \textbf{$\beta$} from the category of pseudocompact topological groups back into itself determines a
monad. Gutik and Repov\v{s} in \cite{GutikRepovs2007} proved the similar result for countably compact $0$-simple topological inverse semigroups: every countably compact $0$-simple topological inverse semigroup is topologically isomorphic to the topological Brandt $\lambda^0$-extension of a countably compact topological group with adjoined isolated zero for some finite non-zero cardinal $\lambda$ and the Stone-\v{C}ech compactification of a countably compact $0$-simple topological inverse semigroup is a compact $0$-simple topological inverse semigroup (see Theorems~2 and 3 in \cite{GutikRepovs2007}). Hence the functor of the Stone-\v{C}ech compactification
$\beta\colon \mathscr{B}^*(\mathscr{CC\!T\!G}) \rightarrow \mathscr{B}^*(\mathscr{CC\!T\!G})$ from the category of Hausdorff countably compact Brandt topological semigroups $\mathscr{B}^*(\mathscr{CC\!T\!G})$ into itself  determines a monad \cite{GutikRepovs2010}. Also, in \cite{GutikPavlykReiter2011} were obtained similar results to \cite{GutikRepovs2007} for pseudocompact completely $0$-simple topological inverse semigroups. There was proved that every countably compact completely $0$-simple topological inverse semigroup is topologically isomorphic to the topological Brandt $\lambda^0$-extension of a pseudocompact topological group with adjoined isolated zero for some finite non-zero cardinal $\lambda$ and the Stone-\v{C}ech compactification of a pseudocompact completely $0$-simple topological inverse semigroup is a compact $0$-simple topological inverse semigroup (see Theorems~1 and 2 in \cite{GutikPavlykReiter2011}).

By $\mathscr{B}^*(\mathscr{PC\!T\!G})$ we denote the category of completely $0$-simple Hausdorff pseudocompact topological inverse semigroup, i.e., the objects of $\mathscr{B}^*(\mathscr{PC\!T\!G})$ are Hausdorff pseudocompact Brandt topological inverse semigroups and morphisms in $\mathscr{B}^*(\mathscr{PC\!T\!G})$ are continuous homomorphisms between such topological semigroups. Then Theorems~2 from \cite{GutikPavlykReiter2011} implies the following:

\begin{corollary}\label{corollary-3.4}
The functor of the Stone-\v{C}ech compactification $\beta\colon
\mathscr{B}^*(\mathscr{PC\!T\!G}) \rightarrow
\mathscr{B}^*(\mathscr{PC\!T\!G})$ determines a monad.
\end{corollary}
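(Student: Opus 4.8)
The plan is to produce the three data of a monad on $\mathscr{B}^*(\mathscr{PC\!T\!G})$ — an endofunctor, a unit, and a multiplication — and to check the unit and associativity laws. The decisive input is Theorem~2 of \cite{GutikPavlykReiter2011}, which asserts that the Stone-\v{C}ech compactification $\beta X$ of any Hausdorff pseudocompact Brandt topological inverse semigroup $X$ is a \emph{compact} $0$-simple topological inverse semigroup; since every compact space is pseudocompact, $\beta X$ is again an object of $\mathscr{B}^*(\mathscr{PC\!T\!G})$. Hence $\beta$ sends objects of the category to objects of the category, and the only real task is to organise the standard Stone-\v{C}ech adjunction into a monad on this subcategory.

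First I would verify that $\beta$ is a functor on $\mathscr{B}^*(\mathscr{PC\!T\!G})$. For a morphism $f\colon X\rightarrow Y$ (a continuous homomorphism) let $\beta f\colon\beta X\rightarrow\beta Y$ be the unique continuous extension of the composite $X\xrightarrow{f}Y\hookrightarrow\beta Y$ into the compact space $\beta Y$. That $\beta f$ is again a homomorphism follows from the density of $X$ in $\beta X$ together with the \emph{joint} continuity of multiplication on the compact topological inverse semigroups $\beta X$ and $\beta Y$ furnished by Theorem~2 of \cite{GutikPavlykReiter2011}: the continuous maps $(a,b)\mapsto\beta f(a\cdot b)$ and $(a,b)\mapsto\beta f(a)\cdot\beta f(b)$ coincide on the dense subset $X\times X$ of $\beta X\times\beta X$ and therefore everywhere. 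Functoriality, $\beta(\operatorname{Id}_X)=\operatorname{Id}_{\beta X}$ and $\beta(g\circ f)=\beta g\circ\beta f$, is then immediate from the uniqueness of continuous extensions.

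Next I would take the unit $\eta\colon\operatorname{Id}\Rightarrow\beta$ to be the family of canonical embeddings $\eta_X\colon X\hookrightarrow\beta X$, each a continuous homomorphism and hence a morphism of the category, with naturality being exactly the extension property defining $\beta f$. For the multiplication the crucial remark is that $\beta X$ is \emph{already compact}, so the embedding $\eta_{\beta X}\colon\beta X\hookrightarrow\beta\beta X$ is a topological isomorphism; I would let $\mu_X\colon\beta\beta X\rightarrow\beta X$ be its inverse, a morphism of $\mathscr{B}^*(\mathscr{PC\!T\!G})$. In other words $\beta$ is the idempotent monad reflecting onto the full subcategory of compact objects, so that $\beta^2\cong\beta$ and the usual Stone-\v{C}ech adjunction $\beta\dashv(\text{inclusion of compacta})$ restricts to the category.

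With these choices the monad laws become routine. The right unit law $\mu_X\circ\eta_{\beta X}=\operatorname{Id}_{\beta X}$ holds by the very definition of $\mu_X$; the left unit law follows once one checks $\beta\eta_X=\eta_{\beta X}$, which holds because both maps extend $\eta_{\beta X}\circ\eta_X$ and agree on the dense set $X$; and the associativity $\mu_X\circ\beta\mu_X=\mu_X\circ\mu_{\beta X}$ reduces likewise to the uniqueness of continuous extensions of a map defined on $X$, exactly as in the group case of Comfort--Ross \cite{ComfortRoss1966} and the countably compact case treated in \cite{GutikRepovs2010}. I expect the only genuine obstacle to be the homomorphism-extension step of the preceding paragraph — confirming that $\beta f$ preserves multiplication — and this is precisely where the joint continuity of the operation on the compact extension, supplied by Theorem~2 of \cite{GutikPavlykReiter2011}, is indispensable; once that is in hand the remainder is formal category theory inherited from the ambient Stone-\v{C}ech monad.
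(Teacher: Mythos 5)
Your proposal is correct and follows essentially the same route as the paper: the sole substantive input in both is Theorem~2 of \cite{GutikPavlykReiter2011}, which guarantees that $\beta$ carries objects of $\mathscr{B}^*(\mathscr{PC\!T\!G})$ back into the same category, after which the monad structure is the standard Stone-\v{C}ech one. The paper states the corollary as an immediate consequence of that theorem (in analogy with the Comfort--Ross case for pseudocompact groups and the countably compact case of \cite{GutikRepovs2010}), leaving implicit exactly the routine verifications — functoriality via extension of homomorphisms, unit, multiplication, and the monad laws — that you spell out.
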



\section*{Acknowledgements}

This research was carried out with the support of the Estonian Science Foundation and co-funded by Marie Curie Action, grant ERMOS36 
and the grant of National Academy of Sciences of Ukraine for young scientists, grant 0112U005004.

We acknowledge Alex Ravsky for his comments and suggestions.

\end{document}